\newtheorem{theorem}{Theorem}[section]
\newtheorem{lemma}[theorem]{Lemma}
\newtheorem{proposition}[theorem]{Proposition}
\newtheorem{corollary}[theorem]{Corollary}
\theoremstyle{definition}
\newtheorem{definition}[theorem]{Definition}
\newtheorem{example}[theorem]{Example}
\theoremstyle{remark}
\newtheorem{remark}[theorem]{Remark}
\numberwithin{equation}{section}
\renewcommand{\labelenumi}{(\roman{enumi})}
\DeclareMathOperator{\Hom}{Hom}
\DeclareMathOperator{\Ker}{Ker}
\DeclareMathOperator{\Coker}{Coker}
\let\Im\relax
\DeclareMathOperator{\Im}{Im}
\DeclareMathOperator{\Gal}{Gal}
\DeclareMathOperator{\Frac}{Frac}
\DeclareMathOperator{\dlog}{dlog}
\DeclareMathOperator{\height}{ht}
\def\category#1#2{\newcommand{#1}{{\mathbf {#2}}}}
\newcommand{\Set}{{\mathbf {Set}}}
\newcommand{\Sh}{\mathbf{Sh}}
\newcommand{\Abb}{\mathbb{A}}
\newcommand{\Dbb}{\mathbb{D}}
\newcommand{\Gbb}{\mathbb{G}}
\newcommand{\Pbb}{\mathbb{P}}
\newcommand{\Wbb}{\mathbb{W}}
\newcommand{\Zbb}{\mathbb{Z}}
\newcommand{\Bcal}{\mathcal{B}}
\newcommand{\Dcal}{\mathcal{D}}
\newcommand{\Ecal}{\mathcal{E}}
\newcommand{\Fcal}{\mathcal{F}}
\newcommand{\Ical}{\mathcal{I}}
\newcommand{\Ocal}{\mathcal{O}}
\newcommand{\Pcal}{\mathcal{P}}
\newcommand{\Xcal}{\mathcal{X}}
\newcommand{\Ycal}{\mathcal{Y}}
\newcommand{\Zcal}{\mathcal{Z}}
\def\symbol#1#2{\newcommand{#1}{{\mathrm {#2}}}}
\symbol{\del}{\partial}
\newcommand{\pr}{\mathrm{pr}}
\newcommand{\id}{\mathrm{id}}
\renewcommand{\tilde}{\widetilde}
\renewcommand{\bar}{\overline}
\DeclareMathOperator{\Spec}{Spec}
\DeclareMathOperator{\Pic}{Pic}
\DeclareMathOperator{\Res}{Res}
\newcommand{\Sm}{{\mathbf {Sm}}}
\newcommand{\Cor}{{\mathbf {Cor}}}
\newcommand{\PST}{{\mathbf {PST}}}
\newcommand{\NST}{{\mathbf {NST}}}
\newcommand{\RSC}{{\mathbf {RSC}}}
\newcommand{\MCor}{{\mathbf {MCor}}}
\newcommand{\MPST}{{\mathbf {MPST}}}
\newcommand{\bMCor}{{\mathbf {\underline{M}Cor}}}
\newcommand{\bMPST}{{\mathbf {\underline{M}PST}}}
\newcommand{\bMNST}{{\mathbf {\underline{M}NST}}}
\newcommand{\CI}{{\mathbf {CI}}}
\newcommand{\Nis}{\mathrm{Nis}}
\newcommand{\et}{\mathrm{\acute{e}t}}
\newcommand{\Ztr}{\mathbb{Z}_{\mathrm{tr}}}
\newcommand{\cube}{{\bar{\square}}}
\tikzstyle{new edge style 0}=[->]
\tikzstyle{new edge style 1}=[<-]
\tikzstyle{dashedline}=[-, dashed]
\symbol{\eff}{eff}
\symbol{\conn}{conn}
\symbol{\Fil}{Fil}
\symbol{\Art}{Art}
\symbol{\sm}{sm}
\symbol{\prop}{prop}
\symbol{\hyp}{hyp}
\symbol{\ho}{ho}
\symbol{\ur}{ur}
\symbol{\red}{red}
\symbol{\tr}{tr}
\symbol{\Nm}{Nm}
\symbol{\Tr}{Tr}
\symbol{\ch}{ch}
\symbol{\pro}{pro}
\symbol{\RGamma}{\mathrm{R}\Gamma}
\symbol{\Bl}{{Bl}}
\category{\Data}{Data}
\category{\SH}{SH}
\category{\D}{D}
\category{\Field}{Field}
\category{\bMSm}{\underline{M}Sm}
\category{\MSmfin}{MSm^{\mathrm{fin}}}
\category{\bMSmfin}{{\underline{M}Sm}^{\mathrm{fin}}}
\newcommand{\CIrec}{\mathbf{CI}^{\tau,sp}_{\Nis}}
\renewcommand{\div}{\mathrm{div}}
\begin{document}

\title{Steinberg symbols and reciprocity sheaves}
\author{Junnosuke Koizumi}
\date{}
\address{Graduate School of Mathematical Sciences, University of Tokyo, 3-8-1 Komaba, Meguro-ku, Tokyo 153-8914, Japan}
\email{jkoizumi@ms.u-tokyo.ac.jp}
\subjclass{Primary 14F42; Secondary 19D45}
\maketitle
\begin{abstract}
We study multilinear symbols on fields taking values in reciprocity sheaves.
We prove that any such symbol satisfying natural axioms automatically has Steinberg-type relations, which is a manifestation of the geometry of modulus pairs lying behind.
\end{abstract}
\setcounter{tocdepth}{1}

\tableofcontents

\section*{Introduction}

Suppose that $F(K)$ is some abelian group associated to a field $K$, such as the absolute differential forms or the Brauer group.
Let $r,s\geq 0$.
A multilinear map of the form
$$
	\varphi_K\colon K^r\times (K^\times)^s\to F(K)
$$
appearing in arithmetic geometry is often called a {\it symbol}.
Examples include the norm residue symbol $(K^\times)^s \to H^s_\et(\Spec K,\mu_\ell^{\otimes s})$ which is a generalization of the Hilbert symbol, its mod-$p$ variant $K\times (K^\times)^s \to H^1_\et(\Spec K,\Omega^s_{{-}/\mathbb{Z},\log})$, the differential symbol $(K^\times )^s \to \Omega^s_{K/\Zbb,\log}$ and its extension $K\times (K^\times)^s\to \Omega^s_{K/\Zbb}$ (see 2.2 for the definitions).
These symbols share the following properties in common:
	\begin{description}
		\item[(ST1)] $\varphi_K(\dots|\dots, a,\dots,1-a,\dots)=0.$
		\item[(ST2)] $\varphi_K(\dots,ca,\dots|\dots,a,\dots)+\varphi_K(\dots,c(1-a),\dots|\dots, 1-a, \dots)=0.$
	\end{description}	
The relation (ST1) is known as {\it Steinberg relation}.
The purpose of this paper is to reveal an algebro-geometric mechanism making these relations hold.

To formulate the problem, we use the notion of {\it reciprocity sheaf} defined in \cite{KSY}.
Let us fix a perfect base field $k$ and write $\Sm$ for the category of smooth $k$-schemes.
Roughly speaking, a reciprocity sheaf is a Nisnevich sheaf on $\Sm$ with transfers having a good ``ramification filtration''.
Since $k$ is a perfect field, any field $K$ over $k$ can be written as a union of smooth $k$-subalgebras $(A_i)_i$.
For a reciprocity sheaf $F$, we define $F(K)=\varinjlim_{i}F(A_i)$.
We define an {\it $(r,s)$-symbol} $\varphi$ for $F$ to be a family of multilinear maps
$$
	\varphi_K \colon K^r\times (K^\times)^s\to F(K)
$$
natural in $K$ and satisfying the projection formula (see Definition \ref{axiom} for the precise definition).
It is called {\it bounded} if the ``degree of ramification'' of $\varphi_K(a_1,\dots,a_r|b_1,\dots,b_s)$ is bounded from above by the sum of those of $a_1,\dots,a_r,b_1,\dots,b_s$.
Then the main result of this paper can be stated as follows.

\begin{theorem}[see Theorem \ref{main}]
	Let $F$ be a reciprocity sheaf.
	Let $\varphi$ be a bounded $(r,s)$-symbol for $F$.
	Then it satisfies (ST1).
	If $\ch(k)\neq 2$, then it also satisfies (ST2) and
	\begin{align*}\text{(ST3)}\quad&\varphi_K(\dots,bc,\dots,ad,\dots|\dots)+\varphi_K(\dots,ac,\dots,bd,\dots|\dots)\\
			{}={} &\varphi_K(\dots,c,\dots,abd,\dots|\dots) + \varphi_K(\dots,abc,\dots,d,\dots|\dots).\end{align*}
\end{theorem}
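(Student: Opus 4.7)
The plan is to exploit the modulus pair structure underlying reciprocity sheaves to reduce each Steinberg-type identity to a rigidity statement on $\mathbb{P}^1$. By naturality of $\varphi$ in the field argument, it suffices to prove the relations in the universal case. For (ST1), I would introduce a transcendental $t$ over $K$ and aim to prove
$$\varphi_{K(t)}(\dots|\dots,t,\dots,1-t,\dots)=0$$
in $F(K(t))$; specializing $t\mapsto a$ then yields the stated relation for $K$. The point is that the two relevant arguments $t$ and $1-t$ are realized as the coordinate functions on the rational curve $\mathbb{P}^1_K$, with zero loci sitting at $0$ and $1$ respectively, and poles sitting together at $\infty$.

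The next step is to unwind the boundedness hypothesis to show that this element, viewed as a section of $F$ at the generic point of $\mathbb{P}^1_K$, lies in the subgroup $\tilde{F}(\mathbb{P}^1_K,D)$ of sections having modulus bounded by some effective divisor $D$ supported on $\{0,1,\infty\}$, where $\tilde{F}$ is the modulus sheaf with transfers associated to $F$ via the reciprocity structure of \cite{KSY}. Combining this with the cube-invariance of $F$ (equivalently, the $\mathbb{P}^1$-rigidity of reciprocity sheaves), the pullback along the structure morphism
$$F(\Spec K)\xrightarrow{\sim}\tilde{F}(\mathbb{P}^1_K,D)$$
is an isomorphism, so the symbol element is the pullback of a unique element of $F(K)$. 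This element is detected by specialization at any $K$-rational point of $\mathbb{P}^1_K$ outside the bad locus; but it is easier to use that specialization at $t=0$ turns the argument $1-t$ into the unit $1$, which annihilates $\varphi$ by multilinearity of the symbol in its multiplicative slots. Since specialization is compatible with the modulus extension, the lifted element must be zero, and (ST1) follows.

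For (ST2) and (ST3), I would apply the same $\mathbb{P}^1$-with-modulus mechanism to a curve parametrized by an auxiliary transcendental (with the variable placed in both an additive and a multiplicative slot to probe the mixed relation), and derive the stated symmetric identities from (ST1) by elementary algebraic manipulations. The need to invert $2$ at one step, to split a symmetric combination into its Steinberg-type pieces, accounts for the hypothesis $\ch(k)\neq 2$.

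The central obstacle is the second step: translating the axiomatic boundedness into an actual modulus-pair section on $(\mathbb{P}^1_K,D)$, and verifying that the projection-formula axiom of $\varphi$ is compatible with specialization at closed points of $\mathbb{P}^1_K$. This is precisely where the geometry of modulus pairs delivers the Steinberg relation ``for free'': once $\varphi(\dots|\dots,t,\dots,1-t,\dots)$ is shown to live in a with-modulus group that admits a specialization killing the symbol, the cube-invariance of $F$ finishes the argument without any further geometric input.
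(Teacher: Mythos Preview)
The central step fails: the claimed isomorphism $F(\Spec K)\xrightarrow{\sim}\tilde F(\Pbb^1_K,D)$ is not a consequence of cube-invariance and is false in general. Cube-invariance (Theorem~\ref{cube_invariance}) gives $F(K)\simeq \tau_!\tilde F(\Pbb^1_K,\{\infty\})$, i.e.\ for the divisor $\{\infty\}$ with a single reduced point, not for a divisor supported on $\{0,1,\infty\}$. Boundedness applied to the arguments $t,1-t$ only places your element in $\tau_!\tilde F(\Pbb^1_K,\{0\}+\{1\}+2\{\infty\})$, and this group is typically much larger than $F(K)$: already for $F=\Gbb_m$ (which is $\Abb^1$-invariant, so $\tilde F=\omega^*\Gbb_m$) one gets $\Gbb_m(\Pbb^1_K\setminus\{0,1,\infty\})$, which contains $t$ and $1-t$ themselves. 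Your fallback, specializing at $t=0$, is a point lying in $|D|$, where no evaluation map on $\tilde F(\Pbb^1_K,D)$ is available; and at any $K$-point outside $\{0,1,\infty\}$ there is no obvious reason for the specialized symbol to vanish. So the ``$\Pbb^1$-rigidity'' heuristic does not close.

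The paper's route is genuinely different. By Theorem~\ref{bounded_Weil}, a bounded symbol factors through the reciprocity sheaf $h_0^\Nis\bigl((\Gbb_a^+)^{\boxtimes r}\boxtimes(\Gbb_m^+)^{\boxtimes s}\bigr)$, so (ST1) and (ST3) reduce to relations in these Suslin homology sheaves; these are supplied by the identifications $h_0^\Nis((\Gbb_m^+)^{\boxtimes s})\simeq K^M_s$ and $h_0^\Nis(\Gbb_a^+\boxtimes\Gbb_a^+)\simeq\Pcal^1$ from \cite{RSY}. The relation (ST2) is not obtained by ``elementary algebraic manipulations'' from (ST1): the paper introduces an auxiliary symbol $\varphi^B$ taking values in $\Dbb_2$ and proves a factorization $h_0^\Nis(\Dbb_{n+1}^+\boxtimes\Xcal/\Ycal)\to h_0^\Nis(\Wbb_{n+1}^+\boxtimes\Xcal/\Ycal)$ (Theorem~\ref{main3}) via a blow-up computation relying on Saito's logarithmic blow-up invariance (Theorem~\ref{log_blowup}). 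That geometric input is exactly what compensates for the failure of the naive rigidity argument at the non-reduced part of the modulus at $\infty$, and it is where the hypothesis $\ch(k)\neq 2$ enters (Lemma~\ref{ch2_lemma_mixed}).
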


Our proof consists of two steps.
First we prove that an $(r,s)$-symbol for $F$ uniquely extends to a morphism from a certain Suslin homology sheaf to $F$.
Then we study this Suslin homology sheaf to deduce the Steinberg-type relations.
The key for the first step is the theory of unramified sheaves developed by Morel \cite{Mor} and the purity theorem for reciprocity sheaves due to Saito \cite{Sai20}.
We also prove in this step that an $(r,s)$-symbol is bounded if and only if it satisfies the Weil reciprocity; speaking slogan, we have
$$
	\text{bounded ramification} \iff \text{Weil reciprocity}.
$$

For the second step, we use the computation of the relevant Suslin homology sheaves in \cite{RSY}.
This suffices for (ST1) and (ST3), but not for (ST2) since their result is only available in the case $\ch(k)\not\in \{2,3,5\}$.
In this paper we improve their method to include the case $\ch(k)\in \{3,5\}$.
Our idea for proving (ST2) is to use the blow-up technique of Hu-Kriz \cite{HK01} which was originally used to prove (ST1) in the stable $\Abb^1$-homotopy category.
To do this, we need to study the behavior of a reciprocity sheaf (or, more generally, a semipure cube-invariant sheaf with $M$-reciprocity (see \ref{def:CIrec})) under a blow-up.
This can be done employing a result called ``logarithmic blow-up invariance'' which was recently proved by Saito \cite{Sai}.

We close this introduction with a brief discussion on related works.
Let $G_1,\dots,G_r$ be $\mathbb{A}^1$-invariant sheaves over $k$ and $F$ be a reciprocity sheaf over $k$.
Then our result (Theorem \ref{bounded_Weil}) implies that symbols of the form
$$
	G_1(K)\times\dots\times G_r(K)\to F(K)
$$
correspond to morphisms of sheaves $h_0^{\mathbb{A}^1,\Nis}(\bigotimes_i G_i)\to F$, where $h_0^{\mathbb{A}^1,\Nis}$ denotes the Suslin homology sheaf.
Kahn-Yamazaki \cite{KY} showed that the source of this morphism is isomorphic to the sheaf of Somekawa's $K$-groups $K({-},G_1,\dots,G_r)$, which is defined in \cite{Somekawa} in the case where $G_1,\dots,G_r$ are semiabelian varieties and \cite{KY} in general.
Therefore, our result provides an interpretation of Somekawa's $K$-groups in terms of symbols valued in reciprocity sheaves.

\subsection*{Acknowledgements}
The author is grateful to Shuji Saito for a lot of helpful comments, especially about the logarithmic blow-up invariance he proved.
The author would also like to thank Hiroyasu Miyazaki for several inspiring discussions on the cohomology of reciprocity sheaves.
The author thanks anonymous referees for their valuable comments.

\section{Preliminaries}
We fix once and for all a perfect base field $k$.
An {\it algebraic scheme over $k$} is a separated scheme of finite type over $k$.
A {\it smooth scheme over $k$} is an algebraic scheme over $k$ which is smooth over $k$.
Let $\Sm$ denote the category of smooth schemes over $k$.
For these terms, we will omit ``over $k$'' hereinafter.
Unless otherwise specified, the term ``(pre)sheaf'' will mean ``(pre)sheaf of abelian groups''.

\subsection{Big Witt vectors}
First we recall the definition of the group of big Witt vectors.
Let $A$ be a ring and $n\geq 0$.
The {\it group of big Witt vectors of length $n$} is defined as
$$
	\Wbb_n(A):=1+TA[T]/(T^{n+1})\subset (A[T]/(T^{n+1}))^\times.
$$
For example, $\Wbb_0(A)=0$ and $\Wbb_1(A)\simeq A;\,1-aT\mapsto a$.
It is easily seen that any element $f$ of $\Wbb_n(A)$ can be uniquely written in the form $f = \prod_{i=1}^n (1-a_iT^i)$.
We write $f=(a_1,\dots,a_n)$ for this element.
This expression gives a bijection $\Wbb_n(A)\xrightarrow{\sim} A^n$ which is not a group homomorphism in general.
For $a\in A$, we set
$$
	[a] := 1-aT \in \mathbb{W}_n(A)
$$
and call it the {\it Teichm\"uller representative}.
We define $\Wbb_n(X):=\Wbb_n(\Ocal_X(X))$ for $X\in \Sm$.
The presheaf $\Wbb_n$ on $\Sm$ is representable by a group scheme whose underlying scheme is $\Abb^n$.

We also define an abelian group $\mathbb{D}_n(A)$ by $\mathbb{D}_n(A):=\mathbb{W}_n(A)\oplus \mathbb{W}_n(A)\oplus \mathbb{Z}\oplus A^\times$ for each $n\geq 0$.
For $a\in A^\times$, we set
$$
	[\![a]\!] := (1-a^{-1}T, 1-aT, 1, a).
$$
We define $\Dbb_n(X):=\Dbb_n(\Ocal_X(X))$ for $X\in \Sm$.
The presheaf $\Dbb_n$ on $\Sm$ is representable by a group scheme whose underlying scheme is $\Abb^n\times \Abb^n\times \Zbb\times (\Abb^1\setminus\{0\})$.

An important aspect of the group of big Witt vectors is that it is related to relative Picard groups.
Consider $X\in \Sm$ and an effective Cartier divisor $D$ on $X$.
Then the relative Picard group $\Pic(X,D)$ is defined as
$$
	\Pic(X,D):=\{(L,s)\mid L\colon\text{line bundle on }X,\,s\colon \Ocal_D\xrightarrow{\sim}L|_D\}/\simeq.
$$
If $Z$ is an effective Cartier divisor on $X$ disjoint from $D$, then we have an associated element $[Z]:=(\Ocal(Z),\id)\in \Pic(X,D)$, where $\Ocal(Z)$ denotes the sheaf of functions with poles in $Z$.

\begin{proposition}\label{relative_picard}
	Let $X$ be an affine smooth scheme.
	Let $T$ denote the coordinate of $\Abb^1$.
	\begin{enumerate}
		\item There is an isomorphism $\Pic(\mathbb{P}^1_X,(n+1)\{\infty\})/\Zbb[0] \simeq \Wbb_n(X)$ which is given by
			\begin{align}\label{rel_pic_fml_1}
				[\div(a_dT^d+\dots+a_0)]\mapsto \sum_{i=0}^d (a_{d-i}/a_d)T^i.
			\end{align}
		\item There is an isomorphism $\Pic(\mathbb{P}^1_X,\{0,\infty\})/\Zbb[1] \simeq \Gbb_m(X)$ which is given by
			\begin{align}\label{rel_pic_fml_2}
				[\div(a_dT^d+\dots+a_0)]\mapsto (-1)^da_0/a_d.
			\end{align}
		\item There is an isomorphism $\Pic(\mathbb{P}^1_X,(n+1)\{0,\infty\})\simeq \Dbb_n(X)$ which is given by
			\begin{align}\label{rel_pic_fml_3}
				[\div(a_dT^d+\dots+a_0)]\mapsto \biggl(\sum_{i=0}^d (a_i/a_0)T^i, \sum_{i=0}^d (a_{d-i}/a_d)T^i, d, (-1)^da_0/a_d\biggr).
			\end{align}
	\end{enumerate}
\end{proposition}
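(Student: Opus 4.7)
The plan is to handle all three cases uniformly via the fundamental exact sequence of the relative Picard group. From the short exact sequence of sheaves
\[
1 \to 1 + I_D \to \Ocal_{\Pbb^1_X}^\times \to \iota_* \Ocal_D^\times \to 1
\]
(where $\iota \colon D \hookrightarrow \Pbb^1_X$) and the identification $\Pic(\Pbb^1_X, D) = H^1(\Pbb^1_X, 1 + I_D)$, I extract the long exact sequence
\[
\Gbb_m(\Pbb^1_X) \to \Gbb_m(D) \xrightarrow{\delta} \Pic(\Pbb^1_X, D) \to \Pic(\Pbb^1_X) \to \Pic(D).
\]
For $X$ affine, $\Gbb_m(\Pbb^1_X) = \Gbb_m(X)$ and $\Pic(\Pbb^1_X) = \Pic(X) \oplus \Zbb$. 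In each of the three cases $D_{\mathrm{red}}$ is a disjoint union of copies of $X$ on which $\Ocal(\{\infty\})$ is trivial, so $\Pic(\Pbb^1_X) \to \Pic(D)$ is injective on $\Pic(X)$ and kills the degree factor, with kernel $\Zbb$. This yields the short exact sequence
\[
0 \to \Gbb_m(D)/\Gbb_m(X) \xrightarrow{\delta} \Pic(\Pbb^1_X, D) \xrightarrow{\deg} \Zbb \to 0,
\]
which serves as the workhorse.

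Next, I compute $\Gbb_m(D)/\Gbb_m(X)$ in local coordinates. For (i), with $s = 1/T$, $\Ocal_D^\times = \Ocal_X^\times \cdot (1 + s\Ocal_X[s]/(s^{n+1}))$, so the quotient by the diagonal is $\Wbb_n(X)$ via the identification of $1$-units with Witt vectors under $s \leftrightarrow T$. For (ii), $\Ocal_D^\times = \Ocal_X^\times \oplus \Ocal_X^\times$ has quotient $\Gbb_m(X)$ via $(b_0, b_\infty) \mapsto b_0/b_\infty$. For (iii), combining these gives $\Gbb_m(D)/\Gbb_m(X) \simeq \Gbb_m(X) \oplus \Wbb_n(X)^2$; together with the $\Zbb$ from the exact sequence this recovers $\Dbb_n(X)$ as an abelian group.

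To verify the explicit formulas in (i) and (ii), I use the connecting map $\delta$: for $f = a_d T^d + \dots + a_0$, the divisor $V(f)$ has degree $d$, and choosing a reference of degree $1$ disjoint from $D$ (namely $[\{0\}]$ for (i), $[\{1\}]$ for (ii)), the class $[V(f)] - d\cdot[\mathrm{ref}]$ equals $\delta(g|_D)$ where $g$ has divisor $V(f) - d\cdot[\mathrm{ref}]$ and is regular near $D$. In (i), $g = f/T^d = a_d + a_{d-1}s + \dots + a_0 s^d$ near $\infty$; its image in $\Wbb_n(X)$ after dividing out the constant $a_d$ is $\sum (a_{d-i}/a_d)s^i$, matching \eqref{rel_pic_fml_1}. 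In (ii), $g = f/(T-1)^d$ satisfies $g(0) = (-1)^d a_0$ and $g(\infty) = a_d$, with ratio $(-1)^d a_0/a_d$.

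For case (iii), I proceed directly: define $\Phi\colon [V(f)] \mapsto \bigl(\sum(a_i/a_0)T^i,\,\sum(a_{d-i}/a_d)T^i,\,d,\,(-1)^d a_0/a_d\bigr)$ on generators, and verify it is a well-defined bijection. (Splitting the exact sequence via $[\{1\}]$ would give a valid but distinct isomorphism, differing from \eqref{rel_pic_fml_3} by a degree-dependent shift involving $(1-T)^{\pm d}$.) Well-definedness reduces to the following check: if $V(f_2) - V(f_1) = \div(h)$ for some $h \in 1 + I_D$, then up to a scalar (invisible to $\Phi$) we have $f_1 \equiv f_2 \pmod{I_D}$, forcing $a_i^{(1)} = a_i^{(2)}$ for $i \in [0,n] \cup [d-n, d]$, and hence equality of $\Phi$ in each factor after truncation modulo $T^{n+1}$ (respectively $s^{n+1}$). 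Additivity under $V(f_1 f_2) = V(f_1) + V(f_2)$ is a direct expansion, and bijectivity follows by comparison with the abstract isomorphism from the exact sequence. This direct verification in case (iii) is the principal technical step.
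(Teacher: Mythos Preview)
Your argument is correct and follows essentially the same route as the paper: both identify $\Pic(\Pbb^1_X, D)$ modulo the degree with $\Gbb_m(D)/\Gbb_m(X)$, you via the cohomology long exact sequence and the paper by directly inspecting $(L,s)$-pairs and observing that after killing $\Zbb[0]$ every class is represented by $(\Ocal,s)$ with $s\in\Ocal_D(D)^\times$ modulo $A^\times$. You go beyond the paper in actually verifying the explicit formulas (the paper omits this, deferring (iii) to \cite{BRS}); one small notational caveat is that the kernel sheaf you denote $1+I_D$ should strictly be $\Ocal^\times\cap(1+I_D)$ since elements of $1+I_D$ need not be units, but this does not affect the exact sequence or anything downstream.
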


\begin{proof}
	(i) Let $D=(n+1)\{\infty\}$ and $A=\Ocal_X(X)$.
	We have an isomorphism $\Pic(\mathbb{P}^1_X)\simeq \Pic(X)\oplus \mathbb{Z}$.
	For each element $(L,s)\in \Pic(\mathbb{P}^1_X,D)$, $s$ gives a trivialization of $L|_{\{\infty\}}$ and hence $L\simeq \Ocal(d)$ for some $d\in \mathbb{Z}$.
	Therefore $\Pic(\mathbb{P}^1_X,D)/\Zbb[0]$ is generated by the elements of the form $(\Ocal_{\mathbb{P}^1_X},s)$.
	A trivialization $s\colon \Ocal_D\xrightarrow{\sim} \Ocal_{\mathbb{P}^1_X}|_D$ is given by an element of $\Ocal_D(D)^\times$, i.e. an invertible truncated polynomial $f\in A[T]/(T^{n+1})^\times$.
	Two polynomials $f$ and $f'$ defines a same element in $\Pic(\mathbb{P}^1,D)/\Zbb[0]$ if and only if there is some $a\in A^\times$ such that $f=af'$.
	Hence we get $\Pic(\mathbb{P}^1,D)/\Zbb[0]\simeq (A[T]/(T^{n+1}))/A^\times\simeq \mathbb{W}_n(A)$.
	(ii) and (iii) can be proved similarly; see \cite[Lemma 2.2 and Remark 2.3]{BRS} for a detailed proof of (iii).
\end{proof}

\begin{remark}
	The isomorphism given in Proposition \ref{relative_picard} (i) sends $[\div(T-a)]$ to $[a]$.
	In other words, the rational point $a\in \mathbb{A}^1$ corresponds to the Teichm\"uller representative $[a]$.
	Similarly, the isomorphism given in Proposition \ref{relative_picard} (iii) sends $[\div(T-a)]$ to $[\![a]\!]$.
\end{remark}

\subsection{Unramified sheaves}
We recall from \cite{Mor} the notion of unramified sheaf.

We define $\Sm^\pro$ to be the category of noetherian regular schemes over $k$ of the form $X=\varprojlim_{i}X_i$, where $(X_i)_i$ is a projective system of smooth $k$-schemes with affine transition maps.
Then we can regard $\Sm^\pro$ as a subcategory of the category of pro-objects of $\Sm$ (see \cite[Lemma 2.5]{RS21}).
Consider a presheaf $F$ of sets on $\Sm$.
For $X=\varprojlim_iX_i\in \Sm^\pro$, we can define $F(X):=\varinjlim_iF(X_i)$. 
For example, we have $F(\Ocal_{X,x})=\varinjlim_{x\in U} F(U)$ for $X\in \Sm$ and $x\in X$.
Let $\Field_k$ denote the category of fields over $k$.
Since $k$ is perfect, any $K\in \Field_k$ is a union of smooth $k$-subalgebras.
Therefore $F$ induces a colimit-preserving functor $\Field_k\to \Set$. 

\begin{definition}
	Let $F$ be a Nisnevich sheaf of sets on $\Sm$.
	\begin{enumerate}
		\item We say that $F$ has {\it global injectivity} if for any connected $X\in \Sm$, the restriction map $F(X)\to F(k(X))$ is injective.
		\item $F$ is called an {\it unramified sheaf} if it has global injectivity and $F(X)=\bigcap_{x\in X^{(1)}}F(\Ocal_{X,x})$ holds for any connected $X\in \Sm$.
	\end{enumerate}
\end{definition}

\begin{example}
	The sheaves $\Gbb_a\colon X\mapsto \Ocal(X)$ and $\Gbb_m\colon X\mapsto \Ocal(X)^\times$ are unramified.
	This is a consequence of the fact that $A=\bigcap_{\height\mathfrak{p}=1} A_{\mathfrak{p}}$ holds for a noetherian normal domain $A$.
\end{example}

\begin{theorem}[{\cite[Theorem 2.11]{Mor}}]\label{unramified_lemma_Morel}
	Let $F,G$ be unramified sheaves of sets and $\{\varphi_K\colon F(K)\to G(K)\}_{K\in \Field_k}$ be a family of maps natural in $K\in \Field_k$.
	Suppose that for any connected $X\in \Sm$ and $x\in X^{(1)}$, we have $\varphi_{k(X)}(F(\Ocal_{X,x}))\subset G(\Ocal_{X,x})$ and the diagram
	$$
	\xymatrix{
		F(\Ocal_{X,x})\ar[r]^-{\varphi_{\Ocal_{X,x}}}\ar[d]		&		G(\Ocal_{X,x})\ar[d]\\
		F(k(x))\ar[r]^-{\varphi_{k(x)}}										&		G(k(x))
	}
	$$
	is commutative, where $\varphi_{\Ocal_{X,x}}$ denotes the restriction of $\varphi_{k(X)}$.
	Then $\{\varphi_K\}_{K\in \Field_k}$ extends to a morphism $\varphi\colon F\to G$ of sheaves of sets.
\end{theorem}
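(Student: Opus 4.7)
The plan is to define $\varphi_X\colon F(X)\to G(X)$ for every $X\in\Sm$ by restricting $\varphi_{k(X)}$ to $F(X)\subset F(k(X))$, and then to verify naturality. By passing to components we may assume all schemes are connected throughout. For the well-definedness of $\varphi_X$, use that $G$ is unramified, so $G(X)=\bigcap_{x\in X^{(1)}}G(\Ocal_{X,x})$. For any $x\in X^{(1)}$ we have $F(X)\subset F(\Ocal_{X,x})$, and the hypothesis guarantees $\varphi_{k(X)}(F(\Ocal_{X,x}))\subset G(\Ocal_{X,x})$; intersecting over $x$ places the image in $G(X)$.

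For naturality, factor a morphism $f\colon Y\to X$ through its graph $\Gamma_f\colon Y\hookrightarrow Y\times X$ followed by the smooth projection $p\colon Y\times X\to X$. Naturality for $p$ reduces via injectivity of $G$ to the field inclusion $k(X)\hookrightarrow k(Y\times X)$, where it holds by the given naturality over $\Field_k$. It therefore suffices to treat closed immersions.

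For a codimension-$1$ closed immersion $i\colon Z\hookrightarrow X$, observe that the pullback $F(X)\to F(Z)\hookrightarrow F(k(Z))$ agrees with $F(X)\to F(\Ocal_{X,\eta_Z})\to F(k(\eta_Z))=F(k(Z))$, since both correspond by functoriality of $F$ to the single morphism $\Spec k(Z)\to X$ picking out the codim-$1$ point $\eta_Z$. The hypothesized commutativity of the residue diagram at $\eta_Z$, combined with injectivity of $G$ on $Z$, then yields $\varphi_Z\circ F(i)=G(i)\circ\varphi_X$. For higher codimension $c>1$, argue by induction. The claim is local on $Z$, so one may shrink $X$ to an affine open around any chosen point $z\in Z$ in which $Z$ is cut out by a regular sequence $t_1,\dots,t_c$; this produces a flag $X\supset Z_1\supset\cdots\supset Z_c=Z$ of smooth closed subschemes, each of codimension $1$ in the previous one. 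Applying the codim-$1$ case to each step and composing gives the required commutation.

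The main obstacle is the iteration in the higher-codimension step: one must exploit the hypothesis being stated uniformly for \emph{all} connected $X\in\Sm$, so that it supplies codim-$1$ data not only on $X$ but also on each intermediate $Z_j\in\Sm$ of the flag. A secondary point is the identification of the pullback along a codim-$1$ closed immersion with the residue at the corresponding codim-$1$ point; this rests on functoriality of $F$ applied to the unique factorization $\Spec k(Z)\to\Spec\Ocal_{X,\eta_Z}\to X$, together with the analogous identification for $G$.
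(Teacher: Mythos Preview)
The paper does not supply its own proof of this statement; it is quoted verbatim from \cite[Theorem 2.11]{Mor} and used as a black box, so there is nothing in the paper to compare against.

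Your argument is correct and is essentially the standard proof (and the one given in Morel's book): define $\varphi_X$ as the restriction of $\varphi_{k(X)}$, use unramifiedness of $G$ for well-definedness, reduce naturality via the graph factorization to dominant smooth maps and closed immersions, handle the former by naturality over fields plus injectivity of $G$, and handle the latter by induction on codimension after producing a flag of smooth subschemes locally. The reductions you flag at the end are exactly the points that need care, and you have identified them correctly. One small remark: when you say ``the claim is local on $Z$'', what you actually use is that by injectivity of $G$ it suffices to check equality after composing with $G(Z)\hookrightarrow G(k(Z))$, so you may shrink $X$ to any open neighbourhood of the generic point of $Z$; naturality for this open immersion is already covered by the dominant case. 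Also, $Y\times X$ need not be connected even when $Y$ and $X$ are, but each component of $Y\times X$ still dominates $X$ (the projection is smooth, hence open), so the field-level argument applies componentwise.
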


\subsection{Sheaves with transfers}

We recall some terminology from Voevodsky's theory of sheaves with transfers; see \cite{MVW} for details.

Let $X,Y \in \Sm$.
An {\it elementary correspondence} from $X$ to $Y$ is an integral closed subscheme $V\subset X\times Y$ which is finite and surjective over some connected component of $X$.
A {\it finite correspondence} from $X$ to $Y$ is a $\Zbb$-linear combination of elementary correspondences.
Let $\Cor(X,Y)$ denote the abelian group of finite correspondences.
For example, the graph $\Gamma_f$ of a morphism $f\colon X\to Y$ gives a finite correspondence $[\Gamma_f] \in \Cor(X,Y)$, for which we simply write $f$.
If $f$ is finite and surjective, then the transpose of $\Gamma_f$ gives a finite correspondence $[{}^t\Gamma_f]\in \Cor(Y,X)$ in the other direction, for which we write ${}^tf$.
There is a bilinear composition map
$$
	\circ\colon \Cor(Y,Z) \times \Cor(X,Y)\to \Cor(X,Z)
$$
defined by intersection products.
We can form an additive category $\Cor$ of smooth schemes and finite correspondences.
Moreover, we can embed $\Sm$ into $\Cor$ by the graph construction.
The fiber product of schemes (over $k$) gives a symmetric monoidal structure $\times$ on $\Cor$.
Let $\Cor^\pro$ denote the category whose objects are the same as $\Sm^\pro$ and morphisms are finite correspondences.
Then we can regard $\Cor^\pro$ as a full subcategory of the category of pro-objects of $\Cor$ (see \cite[Lemma 2.5]{RS21}).

A {\it presheaf with transfers} is a presheaf on $\Cor$.
It is called a {\it Nisnevich sheaf} if it is a Nisnevich sheaf as a presheaf on $\Sm$.
Let $\PST$ be the category of presheaves with transfers and $\NST$ its full subcategory of Nisnevich sheaves.
The inclusion functor $\NST\to \PST$ admits a left adjoint $({-})_\Nis$ given by the ordinary Nisnevich sheafification.
In particular, $\NST$ is an abelian category and $({-})_\Nis$ is an exact functor.
For $F\in \PST$, a finite surjective morphism $f\colon X\to Y$ induces a homomorphism $f_*:=({}^tf)^*\colon F(X)\to F(Y)$.
In particular, a finite extension $L/K$ in $\Field_k$ induces a homomorphism $\Tr_{L/K}\colon F(L)\to F(K)$.

\begin{example}
	Let $G$ be a smooth commutative group scheme over $k$.
	Then $G$, viewed as a Nisnevich sheaf on $\Sm$, has a canonical structure of a Nisnevich sheaf with transfers; see \cite[Lemma 1.4.4]{BVK16}.
	For example, we can regard $\Gbb_a, \Gbb_m, \Wbb_n\,(n\geq 0)$ and $\Dbb_n\,(n\geq 0)$ as objects of $\NST$.
\end{example}

\begin{example}
	For $X\in \Cor$, let $\Ztr(X)$ denote the presheaf with transfers represented by $X$, i.e. $\Ztr(X)(Y)=\Cor(Y,X)$.
	Then $\Ztr(X)$ is a Nisnevich sheaf.
\end{example}

An object $F\in \PST$ is called {\it $\Abb^1$-invariant} if the canonical homomorphism
$\pi^*\colon F(X)\to F(X\times \Abb^1)$
is an isomorphism for any $X\in \Sm$, where $\pi\colon X\times \Abb^1\to X$ is the projection.
One of the main results of Voevodsky's theory is the following.

\begin{theorem}[{\cite[Theorem 22.3]{MVW}}]\label{a1_invariant_sheafification}
	If $F\in \PST$ is $\mathbb{A}^1$-invariant, then $F_\Nis$ is also $\mathbb{A}^1$-invariant.
\end{theorem}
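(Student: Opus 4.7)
The plan is to reduce $\Abb^1$-invariance of $F_\Nis$ to that of $F$ via a stalkwise comparison. Both $F_\Nis$ and the presheaf $G\colon X \mapsto F_\Nis(X \times \Abb^1)$ are Nisnevich sheaves on $\Sm$: for the latter, the pullback of any Nisnevich cover $\{U_i \to X\}$ to $\{U_i \times \Abb^1 \to X \times \Abb^1\}$ is again a Nisnevich cover, so the sheaf condition for $F_\Nis$ on the product descends to a sheaf condition for $G$ on $X$. Consequently, to prove that $\pi^*\colon F_\Nis \to G$ is an isomorphism of presheaves, it suffices to verify this on Nisnevich stalks at every $x \in X$.

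The first structural input I would invoke is Voevodsky's theorem that the Nisnevich sheafification of a presheaf with transfers is itself a presheaf with transfers, so $F_\Nis \in \NST$ and there is a canonical map $F \to F_\Nis$ in $\PST$. The second is the injectivity theorem for $\Abb^1$-invariant $F \in \PST$: for every smooth connected $U$, the restriction $F(U) \to F(k(U))$ is injective, and the same holds after sheafification. This latter result is the technical heart and is proved by constructing splittings of certain finite correspondences over Nisnevich neighborhoods, using $\Abb^1$-invariance in an essential way. A direct consequence is that on a Henselian local scheme $\Spec R$ with $R = \Ocal_{X,x}^h$, the natural map $F(R) \to F_\Nis(R)$ is an isomorphism, since both groups inject into $F(\Frac R) = F_\Nis(\Frac R)$ and have the same image.

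With these tools in hand, the Nisnevich stalk of $F_\Nis$ at $x$ is identified with $F(R)$, while the stalk of $G$ at $x$ is $F_\Nis(\Spec R \times \Abb^1) = F_\Nis(\Spec R[t])$. A further application of the injectivity theorem at the generic point of $\Spec R[t]$, combined with the fact that Nisnevich-locally on $\Spec R[t]$ each section of $F_\Nis$ comes from $F$, identifies $F_\Nis(\Spec R[t])$ with $F(R[t])$. The $\Abb^1$-invariance of $F$ then gives $F(R) \xrightarrow{\sim} F(R[t])$, and one checks that under the identifications this is precisely the stalk of $\pi^*$, completing the argument.

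The main obstacle is the pair of black-box inputs: Voevodsky's preservation-of-transfers theorem under Nisnevich sheafification and the injectivity theorem for $\Abb^1$-invariant presheaves with transfers. Without the former, $F_\Nis$ cannot be compared to $F$ on the same footing, and the transfer structure is lost in the sheafification. Without the latter, the identifications $F(R) = F_\Nis(R)$ and $F(R[t]) = F_\Nis(R[t])$ on the Henselian local scheme and its $\Abb^1$-relative version fail, and the stalkwise strategy breaks down. Both results lie deep in Voevodsky's theory, and the proof here is essentially an exercise in assembling them once they are available.
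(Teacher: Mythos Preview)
The paper does not prove this theorem; it is quoted directly from \cite[Theorem~22.3]{MVW} with no argument given. So the comparison is between your sketch and Voevodsky's original proof.

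Your outline has a genuine gap at the identification $F_\Nis(R[t]) \simeq F(R[t])$ for $R=\Ocal_{X,x}^h$. You justify this by invoking injectivity at the generic point of $\Spec R[t]$ together with the observation that sections of $F_\Nis$ are Nisnevich-locally sections of $F$. But injectivity only embeds both groups into $F(\Frac R(t))$; it does not show the images coincide. To pass from ``locally in $F$'' to ``globally in $F$'' on $\Spec R[t]$ you would need $F$ itself to satisfy the Nisnevich sheaf condition on $\Abb^1_R$, and that is essentially the statement you are trying to prove. Put differently, the case $X=\Spec R$ of the theorem is exactly that $F_\Nis(\Abb^1_R)=F_\Nis(R)$, and you have rewritten it as $F(\Abb^1_R)=F_\Nis(\Abb^1_R)$ without supplying an independent reason for the latter. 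The black boxes you name (preservation of transfers under sheafification, and injectivity for $\Abb^1$-invariant presheaves with transfers) are necessary ingredients but do not by themselves close this step.

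In Voevodsky's treatment the missing content is supplied by the comparison between Zariski and Nisnevich sheafification for $\Abb^1$-invariant presheaves with transfers, together with a careful analysis of such presheaves on relative smooth curves (the material around \cite[\S21--22]{MVW}). These results are what let one control $F$ on schemes like $\Abb^1_R$ that are neither local nor semilocal, and they are not consequences of the two inputs you isolate.
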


\begin{lemma}\label{injectivity_factorization_lemma}
	Let $\varphi\colon F\to G$ and $\psi\colon F\to F'$ be morphisms in $\NST$ and suppose that $\psi$ is an epimorphism.
	If $G$ has global injectivity, then $\varphi$ factors through $F'$ if and only if $\varphi_K$ factors through $F'(K)$ for each $K\in \Field_k$.
\end{lemma}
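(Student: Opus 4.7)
The ``only if'' direction is immediate, since a factorization $\varphi = \tilde\varphi\circ \psi$ in $\NST$ restricts to a factorization $\varphi_K = \tilde\varphi_K \circ \psi_K$ on every $K \in \Field_k$. I focus on the nontrivial direction.

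The plan is to use the abelian structure of $\NST$ to reduce the factorization problem to vanishing on a subsheaf. Since $\psi$ is an epimorphism in the abelian category $\NST$, the canonical map $F/\Ker(\psi) \to F'$ is an isomorphism, so $\varphi$ factors through $F'$ if and only if the composite $\Ker(\psi) \hookrightarrow F \xrightarrow{\varphi} G$ is zero. Hence it suffices to show that $\varphi(a) = 0$ in $G(X)$ for every $X \in \Sm$ and every $a \in \Ker(\psi)(X)$.

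I would decompose $X$ into its connected components and reduce to the case $X$ connected, so that the generic point $\Spec k(X)$ makes sense. The injectivity hypothesis on $G$ gives an injection $G(X) \hookrightarrow G(k(X))$, so it suffices to prove that the image of $\varphi(a)$ in $G(k(X))$ is zero. Writing $\eta\colon \Spec k(X)\to X$ for the generic point, naturality gives $\varphi(a)|_{k(X)} = \varphi_{k(X)}(a|_{k(X)})$, and the element $a|_{k(X)}$ lies in the colimit
\begin{equation*}
\Ker(\psi)(k(X)) \;=\; \varinjlim_{x\in U\subset X}\Ker(\psi)(U) \;=\; \varinjlim_{x\in U\subset X}\Ker(\psi_U) \;=\; \Ker(\psi_{k(X)}),
\end{equation*}
where the last equality uses exactness of filtered colimits of abelian groups.

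By hypothesis $\varphi_{k(X)}$ factors through $\psi_{k(X)}\colon F(k(X))\to F'(k(X))$, so $\varphi_{k(X)}$ vanishes on $\Ker(\psi_{k(X)})$, and therefore on $a|_{k(X)}$. Combining with the preceding step, $\varphi(a) = 0$ in $G(X)$, completing the argument. The only mildly technical point is the identification $\Ker(\psi)(k(X)) = \Ker(\psi_{k(X)})$, which rests on the fact that $F(k(X))$ is computed as a filtered colimit of sections over smooth neighborhoods — so I expect no real obstacle beyond making this commutation explicit.
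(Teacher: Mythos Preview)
Your proof is correct and follows essentially the same approach as the paper: both reduce the factorization to showing that $\varphi$ vanishes on $\Ker(\psi)$, and check this by restricting to function fields using the injectivity of $G$. The paper phrases the final step slightly differently (it observes that the image subsheaf $\varphi(\Ker\psi)_{\Nis}\subset G$ inherits injectivity and vanishes on fields), but the content is the same as your direct argument.
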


\begin{proof}
	Suppose that $\varphi_K$ factors through $F'(K)$ for each $K\in \Field_k$.
	Let $F''=\Ker \psi$.
	We have to prove that $\varphi(F'')_\Nis\subset G$ is zero.
	Since $\varphi(F'')_\Nis$ is a subsheaf  of $G$, it has global injectivity.
	Since have $\varphi(F'')_\Nis(K)=0$ for each $K\in \Field_k$ by assumption, we get $\varphi(F'')_\Nis=0$.
\end{proof}

The symmetric monoidal structure $\times$ on $\Cor$ extends by colimits to a symmetric monoidal structure $\otimes^\PST$ on $\PST$ and $\otimes^\NST$ on $\NST$.
They satisfy $\Ztr(X)\otimes \Ztr(Y) \simeq \Ztr(X\times Y)$ by definition and the sheafification functor $({-})_\Nis\colon \PST\to \NST$ is symmetric monoidal.
Let $F_1,\dots,F_r\in \NST$, $X_1,\dots,X_r\in \Sm$ and $a_i\in F_i(X_i)$.
Then we have an induced morphism
$
	a_1\boxtimes \dots\boxtimes a_r \colon \Ztr(\prod_{i=1}^r X_i)\to \bigotimes_i^\NST F_i
$
called the {\it exterior product} of $a_1,\dots,a_r$.
If $X_1=\dots=X_r=X$, then pulling back by the diagonal morphism $\Delta_X\colon X\to X^r$ yields an element $a_1 \otimes \dots \otimes a_r=\Delta_X^*(a_1\boxtimes \dots\boxtimes a_r)\in (\bigotimes_i^\NST F_i)(X)$ called the {\it tensor product} of $a_1,\dots,a_r$.

\subsection{Modulus pairs}

The theory of modulus pairs provides a natural framework to deal with ramification-theoretic data.
We recall some terminology here; see \cite{KSY} for details.

A {\it modulus pair} $\Xcal=(X,D_X)$ consists of an algebraic scheme $X$ and an effective Cartier divisor $D_X$ on $X$ such that $X^\circ:=X\setminus |D_X|$ is smooth.
It is called {\it proper} if $X$ is proper.
A basic example is the cube $\cube := (\Pbb^1,\{\infty\})$.
An {\it ambient morphism} $\Xcal\to \Ycal$ of modulus pairs is a morphism $f\colon X^\circ \to Y^\circ$ which extends to a morphism $\bar{f}\colon X\to Y$ such that $\pi^*D_X\geq \pi^*\bar{f}^*D_Y$, where $\pi\colon X^N\to X$ is the normalization.
We define $\bMSmfin$ (resp. $\MSmfin$) to be the category of modulus pairs (resp. proper modulus pairs) and ambient morphisms.
For modulus pairs $\Xcal$ and $\Ycal$, their {\it box product} is defined by $\Xcal\boxtimes\Ycal = (X\times Y, \pr_1^*D_X+\pr_2^*D_Y)$.

Let $\Xcal,\Ycal$ be modulus pairs.
An elementary correspondence $V$ from $X^\circ$ to $Y^\circ$ is called {\it left proper} if its closure $\bar{V}\subset X\times Y$ is proper over $X$, and it is called {\it admissible} if $p^*D_X\geq q^*D_Y$, where $p\colon \bar{V}^N\to X$ and $q\colon \bar{V}^N\to Y$ are projections.
A finite correspondence $\alpha\in \Cor(X^\circ,Y^\circ)$ is called left proper (resp. admissible) if it is a $\Zbb$-linear combination of left proper (resp. admissible) elementary correspondences.
Let $\bMCor(\Xcal,\Ycal)$ denote the abelian group of left proper admissible finite correspondences.
For example, the graph $\Gamma_f$ of an ambient morphism $f\colon \Xcal\to \Ycal$ gives an element $[\Gamma_f]\in \bMCor(\Xcal,\Ycal)$.
The composition of finite correspondences restricts to
$$
	\circ\colon \bMCor(\Ycal,\Zcal) \times \bMCor(\Xcal,\Ycal)\to \bMCor(\Xcal,\Zcal)
$$
and we get an additive category $\bMCor$ of modulus pairs and left proper admissible finite correspondences.
Let $\MCor$ denote its full subcategory of proper modulus pairs.
The box product gives a symmetric monoidal structure $\boxtimes$ on $\bMCor$ and on $\MCor$.
There is a sequence of symmetric monoidal functors
\begin{align}\label{mcor_functors}
	\xymatrix{
		\MCor\ar[r]^-{\tau}\ar@/_12pt/[rr]_-{\omega}				&\bMCor\ar[r]^-{\underline{\omega}}			&\Cor
	}
\end{align}
where $\tau$ is the inclusion and $\underline{\omega}(\Xcal)=X^\circ$.
We define the category $\bMCor^\pro$ as follows:
\begin{itemize}
	\item The objects are pairs $\Xcal=(X,D_X)$ where 
	\begin{enumerate}
		\item $X$ is a separated noetherian scheme over $k$ of the form $X = \varprojlim_i X_i$ where $(X_i)_i$ is a projective system of algebraic schemes,
		\item $D_X=\varprojlim_i D_{X_i}$ where $D_{X_i}$ is an effective Cartier divisor on $X_i$ such that $X_i\setminus|D_{X_i}|$ is smooth, $D_{X_i}|_{X_j}=D_{X_j}$ for $i\leq j$ and $X\setminus |D_X|$ is regular.
	\end{enumerate}
	\item The morphisms are left proper admissible finite correspondences.
\end{itemize}
We can embed $\bMCor^\pro$ into the category of pro-objects of $\bMCor$ (see \cite[Lemma 3.8]{RS21}).

We let $\bMPST$ (resp. $\MPST$) be the category of presheaves on $\bMCor$ (resp. $\MCor$).
For any $G\in \bMPST$ and  a modulus pair $\mathcal{X}$, we can define a presheaf $G_\mathcal{X}$ on $X_\Nis$ by
$$
	(U \xrightarrow{\pi} X) \mapsto G(U, \pi^*D_X).
$$
A presheaf $G\in \bMPST$ is called a {\it Nisnevich sheaf} if $G_\mathcal{X}$ is a Nisnevich sheaf for any modulus pair $\mathcal{X}$.
Let $\bMNST$ denote the full subcategory of $\bMPST$ consisting of Nisnevich sheaves.
The inclusion functor $\bMNST\to \bMPST$ admits a left adjoint $({-})_\Nis$ (but it is not true in general that $(G_\Nis)_{\Xcal}\simeq (G_{\Xcal})_\Nis$).
In particular, $\bMNST$ is an abelian category and $({-})_\Nis$ is an exact functor.
We write $H^i_\Nis(\Xcal,G):=H^i_\Nis(X,G_\Xcal)$ for $G\in \bMNST$.

\begin{example}
	For $\Xcal\in \bMCor$ (resp. $\MCor$), let $\Ztr(\Xcal)$ denote the object of $\bMPST$ (resp. $\MPST$) represented by $\Xcal$, i.e. $\Ztr(\Xcal)(\Ycal)=\bMCor(\Ycal,\Xcal)$ (resp. $\MCor(\Ycal,\Xcal)$).
\end{example}

The functors (\ref{mcor_functors}) extends by colimits to a sequence of right exact functors
$$
	\xymatrix{
		\MPST\ar[r]^-{\tau_!}\ar@/_12pt/[rr]_-{\omega_!}				&\bMPST\ar[r]^-{\underline{\omega}_!}			&\PST.
	}
$$
These functors can be described explicitly as follows:
\begin{itemize}
	\item	For $G\in \MPST$, $\tau_!G$ is given by $\Xcal \mapsto \varinjlim_{\Ycal\in \mathbf{Comp}(\Xcal)}G(\Ycal)$.
	\item	For $G\in \bMPST$, $\underline{\omega}_!G$ is given by $X\mapsto G(X,\emptyset)$.
	\item	For $G\in \MPST$, $\omega_!G$ is given by $X \mapsto \varinjlim_{\Ycal\in \mathbf{Comp}(X,\emptyset)}G(\Ycal)$.
\end{itemize}
Here $\mathbf{Comp}(\Xcal)$ denotes the category of {\it compactifications} of $\Xcal$, i.e. those $\Ycal = (Y,D+\Sigma)\in \MCor$ with $Y\setminus|\Sigma|=X$ and $D|_X=D_X$.
Moreover, these functors admit a right adjoint:
\begin{itemize}
	\item	$\tau_!$ has a right adjoint $\tau^*$ which is given by $\tau^*G(\Xcal)=G(\Xcal)$.
	\item	$\underline{\omega}_!$ has a right adjoint $\underline{\omega}^*$ which is given by $\underline{\omega}^*F(\Xcal)=F(X^\circ)$.
	\item	$\omega_!$ has a right adjoint $\omega^*$ which is given by $\omega^*F(\Xcal)=F(X^\circ)$.
\end{itemize}
Note that $\tau_!$, $\underline{\omega}^*$ and $\omega^*$ are fully faithful since $\tau^*\tau_!\simeq \id$, $\underline{\omega}_!\underline{\omega}^*\simeq \omega_!\omega^*\simeq \id$.
We write $\tau_!^\Nis G := (\tau_!G)_\Nis$ and $\omega_!^\Nis G := (\omega_!G)_\Nis$.

\begin{lemma}[{\cite[Proposition 6.2.1]{KMSY21}}]\label{omega_sheafification}
	For any $G\in \bMPST$, the canonical morphism $(\underline{\omega}_!G)_\Nis \to \underline{\omega}_!(G_\Nis)$ is an isomorphism.
\end{lemma}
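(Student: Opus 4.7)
The plan is to exploit the adjunction $(\underline{\omega}_!, \underline{\omega}^*)$ together with the fact that both functors respect the property of being a Nisnevich sheaf, and then to finish by a Yoneda argument. The lemma will follow once we establish the following two ``preservation'' statements.

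\textbf{Step 1: $\underline{\omega}^*$ restricts to a functor $\NST \to \bMNST$.} Given $F \in \NST$ and a modulus pair $\Xcal = (X, D_X)$, the presheaf $(\underline{\omega}^*F)_\Xcal$ on $X_\Nis$ sends $(U \xrightarrow{\pi} X)$ to $F(U \setminus \pi^{-1}|D_X|)$. Any Nisnevich cover $\{U_i \to U\}$ in $X_\Nis$ restricts to a Nisnevich cover of $U \setminus \pi^{-1}|D_X|$ in $\Sm$, so the Nisnevich sheaf property of $F$ transfers directly to $(\underline{\omega}^*F)_\Xcal$.

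\textbf{Step 2: $\underline{\omega}_!$ restricts to a functor $\bMNST \to \NST$.} For $G \in \bMNST$ and $X \in \Sm$, the restriction of the presheaf $\underline{\omega}_!G \colon Y \mapsto G(Y, \emptyset)$ to the small Nisnevich site $X_\Nis$ coincides with $G_{(X,\emptyset)}$, which is a Nisnevich sheaf by the assumption $G \in \bMNST$.

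\textbf{Step 3: Yoneda.} With Steps 1 and 2 in hand, for any $F \in \NST$ we have natural isomorphisms
\begin{align*}
	\Hom_\NST((\underline{\omega}_!G)_\Nis, F)
	&\simeq \Hom_\PST(\underline{\omega}_!G, F)
	\simeq \Hom_\bMPST(G, \underline{\omega}^*F) \\
	&\simeq \Hom_\bMPST(G_\Nis, \underline{\omega}^*F)
	\simeq \Hom_\PST(\underline{\omega}_!(G_\Nis), F)
	= \Hom_\NST(\underline{\omega}_!(G_\Nis), F),
\end{align*}
where the first uses the universal property of Nisnevich sheafification in $\PST$, the second and fourth are the given adjunction, the third uses Step 1 together with the universal property of $G \to G_\Nis$ in $\bMPST$, and the last equality uses Step 2. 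By Yoneda, the canonical morphism $(\underline{\omega}_!G)_\Nis \to \underline{\omega}_!(G_\Nis)$ --- which is defined by factoring $\underline{\omega}_!G \to \underline{\omega}_!(G_\Nis)$ through the sheafification, valid because the target is a sheaf by Step 2 --- is an isomorphism.

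I do not anticipate any real obstacle: once one unpacks how Nisnevich-sheafiness in $\bMPST$ is defined pointwise via the small Nisnevich sites $X_\Nis$, Steps 1 and 2 are essentially tautological. The remark in the text that $(G_\Nis)_\Xcal \not\simeq (G_\Xcal)_\Nis$ for general $\Xcal$ is not in conflict with this argument, since we only need the compatibility at modulus pairs of the form $(X, \emptyset)$, where the Nisnevich site of the modulus pair agrees with that of the underlying smooth scheme.
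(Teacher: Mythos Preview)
The paper does not give its own proof of this lemma; it is simply quoted from \cite[Proposition 6.2.1]{KMSY21}. Your argument is correct and is the natural one for such compatibility statements: once you verify that $\underline{\omega}^*$ sends $\NST$ into $\bMNST$ (Step~1, which amounts to the observation that $(\underline{\omega}^*F)_\Xcal$ is the pushforward of $F|_{(X^\circ)_\Nis}$ along the open immersion $X^\circ\hookrightarrow X$) and that $\underline{\omega}_!$ sends $\bMNST$ into $\NST$ (Step~2, which is immediate from $(\underline{\omega}_!G)|_{X_\Nis}=G_{(X,\emptyset)}$), the adjunction $(\underline{\omega}_!,\underline{\omega}^*)$ intertwines the two sheafification adjunctions and Yoneda finishes the job. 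This is essentially the argument one finds in the cited reference as well.
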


The symmetric monoidal structure $\boxtimes$ on $\bMCor$ (resp. $\MCor$) extends by colimits to a symmetric monoidal structure $\otimes^\bMPST$ (resp. $\otimes^\MPST$) on $\bMPST$ (resp. $\MPST$).
The functors $\tau_!, \underline{\omega}_!, \omega_!$ and $\omega_!^\Nis$ are symmetric monoidal.

Let $G_1,\dots,G_r\in \bMPST$, $\Xcal_1,\dots,\Xcal_r\in \bMCor$ and $a_i\in G_i(\Xcal_i)$.
Then we can define the {\it exterior product} $a_1\boxtimes\dots\boxtimes a_r\in (\bigotimes_i^\bMPST G_i)(\boxtimes_{i=1}^r\Xcal_i)$ as for $\otimes^\NST$.
If $X_1=\dots=X_r=X$, then pulling back by the diagonal morphism $\Delta_X\colon (X,\sum_{i=1}^rD_{X_i})\to \boxtimes_{i=1}^r\Xcal_i$ yields the {\it tensor product}
$$
	a_1\otimes\dots\otimes a_r=\Delta_X^*(a_1\boxtimes\dots\boxtimes a_r)\in \biggl(\bigotimes_i^\bMPST G_i\biggr)\biggl(X,\sum_{i=1}^rD_{X_i}\biggr).
$$

\subsection{Cube-invariant sheaves and reciprocity sheaves}
An object $G\in \MPST$ is called {\it cube-invariant} if the canonical homomorphism $\pi^*\colon G(\Xcal)\to G(\Xcal\boxtimes \cube)$ is an isomorphism for any $\Xcal\in \MCor$, where $\pi\colon \Xcal\boxtimes \cube\to \Xcal$ is the projection.
We write $\CI$ for the full subcategory of $\MPST$ consisting of cube-invariant objects.
The inclusion functor $\CI\to \MPST$ has a left adjoint $h^0_\cube$ and a right adjoint $h_0^\cube$ which are given by
\begin{align*}
	h_0^\cube G(\Xcal) &= \Coker (G(\Xcal\boxtimes \cube)\xrightarrow{i_0^*-i_1^*} G(\Xcal)),\\
	h^0_\cube G(\Xcal) &= \Hom_\MPST(h_0^\cube(\Xcal),G).
\end{align*}
By definition, $h_0^\cube G$ is the maximal cube-invariant quotient presheaf of $G$ and $h^0_\cube G$ is the maximal cube-invariant sub-presheaf of $G$.

\begin{lemma}
	For any $G_1,\dots,G_r\in \MPST$, the canonical morphism
	$h_0^\cube(\bigotimes_i^\MPST G_i)\to h_0^\cube(\bigotimes_i^\MPST h_0^\cube G_i)$
	is an isomorphism.
\end{lemma}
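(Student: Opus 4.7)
The plan is to reduce by induction to the case $r=2$ and then settle that case via a direct argument combining the right exactness of $h_0^\cube$ and of $\otimes^\MPST$, both being left adjoints.

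For the induction step, I would iterate the $r=2$ case on one tensor factor at a time, chaining the isomorphisms
\[h_0^\cube\biggl(\bigotimes_{i=1}^r G_i\biggr) \xrightarrow{\sim} h_0^\cube\biggl(h_0^\cube G_1 \otimes \bigotimes_{i=2}^r G_i\biggr) \xrightarrow{\sim} \cdots \xrightarrow{\sim} h_0^\cube\biggl(\bigotimes_{i=1}^r h_0^\cube G_i\biggr)\]
using associativity and commutativity of $\otimes^\MPST$.

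For $r=2$ with $G := G_1, H := G_2$, I would factor the canonical morphism as
\[h_0^\cube(G \otimes H) \to h_0^\cube(h_0^\cube G \otimes H) \to h_0^\cube(h_0^\cube G \otimes h_0^\cube H),\]
and by symmetry of $\otimes$ reduce to proving that the first arrow is an isomorphism for any $G,H\in\MPST$. I would introduce $G^\cube \in \MPST$ defined by $G^\cube(\Xcal) := G(\Xcal \boxtimes \cube)$, with functoriality induced by $-\boxtimes\cube\colon\MCor\to\MCor$. The unit sections $i_0,i_1\colon \Spec k \to \cube$ then give a presentation $G^\cube \xrightarrow{i_0^*-i_1^*} G \to h_0^\cube G \to 0$. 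Applying the right exact functors $-\otimes H$ and then $h_0^\cube$ yields
\[h_0^\cube(G^\cube \otimes H) \to h_0^\cube(G \otimes H) \to h_0^\cube(h_0^\cube G \otimes H) \to 0,\]
so the problem boils down to showing that the first arrow is zero.

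The heart of the argument will be to construct a natural morphism $\alpha\colon G^\cube \otimes H \to (G\otimes H)^\cube$ in $\MPST$ through which $(i_0^*-i_1^*)\otimes\id_H$ factors as $(i_0^*-i_1^*)\circ\alpha$. Using the analogue of Lemma \ref{tensor_lemma1} for $\MPST$, $\alpha$ is specified by the bilinear exterior products $G(\cube\boxtimes\Xcal_1) \times H(\Xcal_2) \xrightarrow{\boxtimes} (G\otimes H)(\cube\boxtimes\Xcal_1 \boxtimes\Xcal_2)$; the claimed factorization is then a formal consequence of $p\circ i_j = \id_\Xcal$ for the projection $p\colon \cube\boxtimes\Xcal\to\Xcal$. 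Once that factorization is in hand, the composite $(G\otimes H)^\cube \xrightarrow{i_0^*-i_1^*} G\otimes H \to h_0^\cube(G\otimes H)$ is zero by the very definition of $h_0^\cube(G\otimes H)$, hence so is $G^\cube\otimes H \to h_0^\cube(G\otimes H)$; since $G^\cube \otimes H \to h_0^\cube(G^\cube\otimes H)$ is an epimorphism, the induced map $h_0^\cube(G^\cube\otimes H) \to h_0^\cube(G\otimes H)$ must vanish as well.

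The main obstacle I anticipate is purely bookkeeping: transferring the universal bilinear description of Lemma \ref{tensor_lemma1} from $\NST$ to $\MPST$ and setting up the Day-convolution formalism carefully enough to define $\alpha$ and verify its compatibility with the pullbacks $i_j^*$ and the projection $p$. Once that infrastructure is in place, the argument is a direct consequence of right exactness.
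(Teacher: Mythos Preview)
Your argument is correct. The paper's own proof is a one-line citation to \cite[Proposition~2.1.9]{RSY}; you are in effect supplying a self-contained proof of (the relevant instance of) that cited result. The reduction to $r=2$, the presentation $G^\cube \to G \to h_0^\cube G \to 0$, right exactness of $h_0^\cube$ and of $-\otimes^\MPST H$, and the factorization of $(i_0^*-i_1^*)\otimes\id_H$ through $(G\otimes H)^\cube$ constitute the standard argument that a reflective localization is compatible with a monoidal structure when the localizing morphisms are absorbed by the tensor. One remark: the analogue of Lemma~\ref{tensor_lemma1} you invoke for $\MPST$ is actually \emph{simpler} than the $\NST$ version stated in the paper, since there is no sheafification and hence no injectivity hypothesis is needed; it is nothing more than the universal property of Day convolution. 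So the bookkeeping you anticipate as the main obstacle is entirely routine.
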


\begin{proof}
	This is a formal consequence of \cite[Proposition 2.1.9]{RSY}.
\end{proof}

For $G\in \MPST$ we define $h_0 G:=\omega_!h_0^\cube G$ and $h_0^\Nis G:= (h_0 G)_\Nis$.
For a proper modulus pair $\Xcal$, we write $h_0^\cube \Xcal$ for $h_0^\cube \Ztr(\Xcal)$ and similarly for $h_0$, $h_0^\Nis$.

\begin{lemma}\label{suslin_homology}
	Let $\Xcal\in \MCor$.
	Then for any $Y\in \Sm$ we have
	$$
		h_0\Xcal(Y) \simeq \Coker(\bMCor(Y\boxtimes \cube,\Xcal)\xrightarrow{i_0^*-i_1^*} \Cor(Y,X^\circ)).
	$$
\end{lemma}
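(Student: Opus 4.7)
The plan is to unravel the two layers of the definition $h_0 = \omega_!\, h_0^\cube$, commute the resulting filtered colimit with the cokernel defining $h_0^\cube$, and then identify the two filtered colimits that appear on either side separately.

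First I would start from the explicit formula for $h_0^\cube$ applied to the representable sheaf $\Ztr(\Xcal)$: for any $\bar{\Ycal}\in\MCor$,
$$
h_0^\cube\Xcal(\bar{\Ycal}) = \Coker\bigl(\bMCor(\bar{\Ycal}\boxtimes\cube,\Xcal)\xrightarrow{i_0^*-i_1^*}\bMCor(\bar{\Ycal},\Xcal)\bigr),
$$
where I have used that $\MCor(-,\Xcal)=\bMCor(-,\Xcal)$ when $\Xcal$ is proper (left properness is automatic). Next I apply $\omega_!$ and use its description as a filtered colimit over $\mathbf{Comp}(Y,\emptyset)$, whose objects I will write as $\bar{\Ycal}=(\bar{Y},\Sigma)$. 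Because filtered colimits are exact, they commute with cokernels, so
$$
h_0\Xcal(Y) \;\simeq\; \Coker\Bigl(\varinjlim_{\bar{\Ycal}}\bMCor(\bar{\Ycal}\boxtimes\cube,\Xcal)\longrightarrow \varinjlim_{\bar{\Ycal}}\bMCor(\bar{\Ycal},\Xcal)\Bigr).
$$

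The right-hand colimit is exactly $\omega_!\Ztr(\Xcal)(Y)$. Since $\omega_!$ is the left Kan extension along $\omega\colon\MCor\to\Cor$, it sends the representable $\Ztr(\Xcal)$ to the representable $\Ztr(\omega(\Xcal))=\Ztr(X^\circ)$, which evaluated at $Y$ is $\Cor(Y,X^\circ)$. For the left-hand colimit, the natural map to $\bMCor((Y,\emptyset)\boxtimes\cube,\Xcal)$ exists because each $\bar{\Ycal}\boxtimes\cube$ receives an ambient morphism from $(Y,\emptyset)\boxtimes\cube=(Y\times\Pbb^1,Y\times\{\infty\})$ (the admissibility condition is easier on $(Y,\emptyset)\boxtimes\cube$), and I claim this map is an isomorphism.

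The main obstacle is precisely this last identification. Formally, it amounts to the statement that $(Y,\emptyset)\boxtimes\cube$ is the pro-object $(\bar{\Ycal}\boxtimes\cube)_{\bar{\Ycal}\in\mathbf{Comp}(Y,\emptyset)}$ in $\bMCor^\pro$, so that $\bMCor(-,\Xcal)$ turns it into the asserted filtered colimit. Concretely, given a left-proper admissible $V\in\bMCor((Y,\emptyset)\boxtimes\cube,\Xcal)$, i.e.\ a finite correspondence $V\in\Cor(Y\times\Abb^1,X^\circ)$ whose closure in $Y\times\Pbb^1\times X$ has normalization satisfying $p^*(Y\times\{\infty\})\geq q^*D_X$, I must find some compactification $\bar{\Ycal}=(\bar{Y},\Sigma)$ of $(Y,\emptyset)$ such that the further closure in $\bar{Y}\times\Pbb^1\times X$ satisfies the stronger admissibility $p^*(\Sigma\times\Pbb^1+\bar{Y}\times\{\infty\})\geq q^*D_X$ on its normalization. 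Properness of $\Xcal$ guarantees that the closure is proper over $\bar{Y}\times\Pbb^1$ so that left properness is automatic, and after starting from any initial compactification $\bar{Y}$ one can arrange the admissibility along $\Sigma\times\Pbb^1$ by replacing $\bar{Y}$ with a suitable blow-up along the boundary that makes $p^*\Sigma$ dominate the contribution of $q^*D_X$ along every irreducible component of the closure meeting $\Sigma\times\Pbb^1\times X$. The cofinality of such compactifications in $\mathbf{Comp}(Y,\emptyset)$ yields the desired identification, and assembling the pieces completes the proof.
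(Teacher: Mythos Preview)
Your proposal is correct and follows exactly the approach the paper indicates in its one-line proof (``This follows from the definition of $h_0^\cube$ and the explicit description of $\omega_!$''): you unwind $h_0=\omega_! h_0^\cube$, commute the filtered colimit over $\mathbf{Comp}(Y,\emptyset)$ with the cokernel, and identify the two resulting colimits with $\Cor(Y,X^\circ)$ and $\bMCor(Y\boxtimes\cube,\Xcal)$ respectively. The cofinality argument you sketch for the left-hand colimit is the standard one in the modulus literature and is precisely what the paper is tacitly invoking.
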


\begin{proof}
	This follows from the definition of $h_0^\cube$ and the explicit description of $\omega_!$.
\end{proof}

The right hand side of Lemma \ref{suslin_homology} is the same thing as the Suslin homology with modulus $H_0^S((Y\times X)/Y, \pr_2^*D_X)$; see \cite[Definition 3.1]{RY16} for the definition.

We define $\RSC$ to be the essential image of $\omega_!\colon \CI\to \PST$ and $\RSC_\Nis:=\RSC\cap \NST$.
An object of $\RSC$ (resp. $\RSC_\Nis$) is called a {\it reciprocity presheaf} (resp. {\it reciprocity sheaf}).
$\RSC\subset \PST$ is closed under subobjects and quotient objects, and hence is an abelian subcategory of $\PST$.
By definition, $\omega_!\colon \CI\to \RSC$ has a right adjoint $h^0_\cube\omega^*$.
We define $\tilde{F}:=h^0_\cube\omega^*F$ for $F\in \RSC_\Nis$.
It is known that $F\mapsto \tilde{F} $ is fully faithful, i.e. $\omega_!\tilde{F}\simeq F$.
The next result is fundamental but highly non-trivial:

\begin{theorem}[{\cite[Theorem 0.1]{Sai20}}]
	$({-})_\Nis\colon \PST\to \NST$ sends $\RSC$ to $\RSC_\Nis$.
\end{theorem}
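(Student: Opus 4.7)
Let $F\in\RSC$. Since $F_\Nis\in\NST$ is automatic, the content of the statement is that $F_\Nis\in\RSC$, i.e.\ that we can produce $G\in\CI$ with $\omega_! G\cong F_\Nis$. Because $F\in\RSC$ and $h^0_\cube\omega^*$ is the fully faithful right adjoint to $\omega_!$, we may begin with $\tilde F:=h^0_\cube\omega^*F\in\CI$, which satisfies $\omega_!\tilde F=F$; the plan is to lift the sheafification of $F$ to the modulus world without breaking cube-invariance.

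The construction of the candidate $G$ proceeds as follows. Form $\tau_!\tilde F\in\bMPST$, take its Nisnevich sheafification $H:=(\tau_!\tilde F)_\Nis\in\bMNST$, and set $G:=h^0_\cube\tau^*H\in\CI$. By Lemma \ref{omega_sheafification}, together with $\underline{\omega}_!\tau_!=\omega_!$, we already get $\underline{\omega}_!H\cong(\underline{\omega}_!\tau_!\tilde F)_\Nis=(\omega_!\tilde F)_\Nis=F_\Nis$. It thus remains to check two compatibility statements: (a) that $\tau^*H$ is already cube-invariant, so that $G=\tau^*H$; and (b) that the map $\omega_!G\to\underline{\omega}_!H$ induced by the counit $\tau_!\tau^*\to\id$ is an isomorphism. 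Both will follow from the single input that the Nisnevich cohomology $H^i_\Nis(-,\tilde F)$ is cube-invariant as a functor on $\MCor$ for every $i\geq 0$: cube-invariance of $\tilde F$ as a presheaf handles $i=0$, and a Leray spectral sequence argument for the projection $\Xcal\boxtimes\cube\to\Xcal$ reduces both (a) and (b) to the higher-degree case.

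The main obstacle is therefore the cohomological cube-invariance of $\tilde F$ in degrees $\geq 1$. This is precisely the content of Saito's main theorem in \cite{Sai20}, and it is by far the deep part. The approach I would follow is to use the purity/Gysin theorem for reciprocity sheaves: on a smooth modulus pair one builds a Gysin sequence relating cohomology of $\tilde F$ to cohomology along the boundary divisor $D_X$, compatible enough with the $\boxtimes\cube$-action that an induction on cohomological degree propagates cube-invariance upwards from the presheaf level. The genuinely technical point is controlling how sections of $\tilde F$ behave near $D_X$, i.e.\ reading off ramification from the modulus, which is exactly the kind of information the modulus framework was designed to encode; this is where Saito's argument does its real work.
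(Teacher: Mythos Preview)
The paper does not prove this theorem at all: it is quoted verbatim as \cite[Theorem 0.1]{Sai20} and used as a black box, so there is no ``paper's own proof'' to compare your proposal against. What you have written is therefore not being measured against anything in the present paper; at best it is a sketch of how one might organise Saito's argument.

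As a sketch it is honest about where the real content lies---you yourself say that cohomological cube-invariance of $\tilde F$ is ``precisely the content of Saito's main theorem'' and ``by far the deep part''---but this means your proposal is not actually a proof: it reduces the statement to another statement of equal or greater difficulty from the same source. There is also a circularity risk you should be aware of. You invoke the purity/Gysin theorem for reciprocity sheaves (the paper's Theorem~\ref{purity}, i.e.\ \cite[Theorem 0.2]{Sai20}) and the cube-invariance of Nisnevich cohomology (the paper's Theorem~\ref{cube_invariance}, i.e.\ \cite[Theorem 0.6]{Sai20}) as inputs, but in \cite{Sai20} these results, the present theorem, and the statement $\tau_!^\Nis\tilde F\in\CIrec$ (the paper's Lemma~\ref{tilde_in_CIrec}) are proved together in an intertwined way; you cannot simply assume purity and cube-invariance of cohomology to deduce the sheafification statement without checking the internal logical order of \cite{Sai20}. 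Note also that Theorem~\ref{cube_invariance} as stated requires $G\in\CIrec$, i.e.\ already a semipure Nisnevich sheaf in the image of $\tau_!$, so applying it to $\tilde F$ before you know $\tau_!^\Nis\tilde F\in\CIrec$ is exactly the kind of step that needs justification. In short: your outline points in the right direction, but it is not a self-contained argument, and the paper offers nothing to compare it to.
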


This theorem shows that $\RSC_\Nis\subset \NST$ is closed under subobjects and quotient objects, and hence is an abelian subcategory of $\NST$.
It also shows that $\omega_!^\Nis$ sends $\CI$ to $\RSC_\Nis$ and $\tilde{({-})}$ gives a right adjoint of it.
The following deep result is also due to Saito:

\begin{theorem}[Purity, {\cite[Theorem 0.2]{Sai20}}]\label{purity}
	Let $F\in \RSC_\Nis$, $X\in \Sm$ and $x\in X^{(n)}$.
	Then we have
	$H^i_{x,\Nis}(X,F)=0$ for $i\neq n$.
\end{theorem}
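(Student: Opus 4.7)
The plan is to lift the problem to the modulus category $\CI$, where one can exploit cube-invariance and blow-up techniques unavailable on $\Sm$ alone. By passing to Nisnevich stalks we may assume $X = \Spec R$ is henselian local of dimension $n$ with closed point $x$. Using the adjunction $\omega_!^\Nis \dashv \tilde{(\cdot)}$ together with Lemma \ref{omega_sheafification}, the Nisnevich cohomology of $F$ on opens of $X$ can be recovered from that of $\tilde F = h^0_\cube \omega^* F \in \CI$ evaluated on pairs of the form $(U,\emptyset)$, so the question becomes one about $\tilde F$ that is accessible to modulus-theoretic tools.

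I would treat the two vanishing directions separately. For $i > n$, I would induct on $n$ using the long exact sequence of local cohomology for $(X, X \setminus \{x\})$, reducing to a cohomological dimension bound on the punctured spectrum; the base case $n = 1$ follows from the fact that any reciprocity sheaf, having a natural conductor filtration, is unramified on smooth curves at closed points. For $i < n$, I would seek a modulus analog of Gabber/Voevodsky's geometric presentation: Nisnevich-locally around $x$, produce a proper compactification of an \'etale neighbourhood of $x$ together with an admissible modulus divisor supported away from $x$, so that a class representing a local cohomology element can be trivialised along a $\cube$-line connecting $x$ with the boundary. Cube-invariance of $\tilde F$ then forces the class to vanish.

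The main obstacle is exactly this modulus presentation step. Classical Gabber/Voevodsky presentations proceed via arbitrary $\mathbb{A}^1$-moves, which generally fail to respect the admissibility condition on modulus correspondences; one instead needs $\cube$-moves that preserve a prescribed boundary divisor. Saito's logarithmic blow-up invariance result, mentioned in the introduction, supplies the necessary flexibility: it allows modification of the divisor along blow-ups in smooth centres without altering the Nisnevich cohomology of $\tilde F$, which should be enough to construct, Nisnevich-locally at $x$, a finite admissible projection to affine space compatible with the modulus structure, completing the geometric input needed to kill classes in degrees $i < n$.
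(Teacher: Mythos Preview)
The paper does not prove this theorem; it is quoted verbatim from Saito \cite[Theorem 0.2]{Sai20} as a black box, and is only used here (via Corollary \ref{reciprocity_unramified}) to conclude that reciprocity sheaves are unramified. There is therefore no ``paper's own proof'' to compare against.

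As for the sketch itself: it is not a proof, and you acknowledge as much. The vanishing for $i>n$ is in fact immediate from the Nisnevich cohomological dimension bound (the punctured henselian local spectrum has Krull dimension $n-1$), so no induction is needed there. The entire content of Saito's theorem is the case $i<n$, and that is exactly the step you label ``the main obstacle'': constructing a modulus-compatible geometric presentation so that cube-invariance can be brought to bear. Asserting that such a presentation ``should'' exist, or that logarithmic blow-up invariance ``supplies the necessary flexibility,'' does not constitute an argument; Saito's actual proof in \cite{Sai20} is long precisely because this step is delicate. There is also a potential circularity: the logarithmic blow-up invariance you invoke is Theorem \ref{log_blowup}, cited from the later paper \cite{Sai}, and you would need to verify that that result does not itself rely on the purity theorem from \cite{Sai20} that you are trying to establish.
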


\begin{corollary}\label{reciprocity_unramified}
	Reciprocity sheaves are unramified.
\end{corollary}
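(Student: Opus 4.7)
The plan is to deduce the two defining properties of an unramified sheaf directly from Saito's purity theorem via the coniveau (Cousin) spectral sequence. Fix $F \in \RSC_\Nis$ and a connected $X \in \Sm$ with generic point $\eta$. The coniveau filtration on $X$ in the Nisnevich topology yields a convergent spectral sequence
\[
E_1^{p,q} = \bigoplus_{x \in X^{(p)}} H^{p+q}_{x,\Nis}(X, F) \Longrightarrow H^{p+q}_\Nis(X, F),
\]
and by Theorem \ref{purity} (purity) all the $E_1$-terms vanish except in the row $q=0$. Hence the spectral sequence degenerates at $E_2$ and the associated Gersten-type complex
\[
0 \to F(X) \to \bigoplus_{x \in X^{(0)}} F(k(x)) \to \bigoplus_{x \in X^{(1)}} H^1_{x,\Nis}(X,F) \to \cdots
\]
is exact in degrees $\leq 1$, where the $E_1^{0,0}$-term is identified with $F(\eta) = F(k(X))$ by taking the limit of $F$ over Nisnevich neighbourhoods of $\eta$.

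The exactness at the first spot gives property (i): the map $F(X) \to F(k(X))$ is injective. For property (ii), I would apply the same reasoning to the local scheme $\Spec \Ocal_{X,x}$ for each $x \in X^{(1)}$. Since $\Spec \Ocal_{X,x}$ has a unique codimension-one point, namely $x$ itself, the Gersten complex for $\Ocal_{X,x}$ reads
\[
0 \to F(\Ocal_{X,x}) \to F(k(X)) \to H^1_{x,\Nis}(\Ocal_{X,x}, F),
\]
and Nisnevich excision identifies $H^1_{x,\Nis}(\Ocal_{X,x},F)$ with $H^1_{x,\Nis}(X,F)$. Consequently
\[
\bigcap_{x \in X^{(1)}} F(\Ocal_{X,x}) = \ker\biggl(F(k(X)) \to \bigoplus_{x \in X^{(1)}} H^1_{x,\Nis}(X,F)\biggr) = F(X),
\]
where the last equality is exactness of the Gersten complex at $E_1^{0,0}$.

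The only point requiring some care is the identification of the $E_1$-terms with the groups that appear in the definition of an unramified sheaf, in particular the interpretation of $E_1^{0,0}$ as $F(k(X))$ (which is where the limit notation $F(\Spec k(X)) = \varinjlim_{U \subset X} F(U)$ from Section 2.2 comes in) and the excision identification for codimension-one points. Both are standard for Nisnevich sheaves, so the substantive input is entirely Saito's purity theorem; there is no further difficulty.
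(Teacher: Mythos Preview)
Your proof is correct and follows essentially the same route as the paper: both invoke the coniveau/localization spectral sequence, use Theorem~\ref{purity} to collapse it to the row $q=0$, and read off the exact sequence $0\to F(X)\to F(k(X))\to \bigoplus_{x\in X^{(1)}}H^1_{x,\Nis}(\Spec\Ocal_{X,x},F)$. The only cosmetic difference is that you spell out the identification $\ker\del_x = F(\Ocal_{X,x})$ by rerunning the argument on $\Spec\Ocal_{X,x}$, whereas the paper simply asserts it.
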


\begin{proof}
	Let $F$ be a reciprocity sheaf.
	Fix a connected $X\in \Sm$ and consider the localization spectral sequence $E_1^{p,q}=\bigoplus_{x\in X^{(p)}}H^{p+q}_{x,\Nis}(\Spec \Ocal_{X,x},F)\Rightarrow H^{p+q}_\Nis(X,F)$.
	The $E_1$ page is concentrated in $q=0$ by Theorem \ref{purity} and thus we get an exact sequence
	$$
		0\to F(X)\to F(k(X))\xrightarrow{\del} \bigoplus_{x\in X^{(1)}}H^1_{x,\Nis}(\Spec \Ocal_{X,x},F).
	$$
	Since the kernel of $\del_x\colon F(k(X))\to H^1_{x,\Nis}(\Spec \Ocal_{X,x},F)$ is $F(\Ocal_{X,x})$, this proves that $F$ is unramified.
\end{proof}

\begin{example}
	Any smooth commutative group scheme $G$ over $k$, seen as an object of $\NST$, is a reciprocity sheaf (see \cite[Corollary 3.2.5]{KSY}).
	In particular, $\Gbb_a,\,\Gbb_m,\,\Wbb_n$ and $\Dbb_n$ can be regarded as reciprocity sheaves.
\end{example}

\begin{example}
	If $F\in \PST$ is $\Abb^1$-invariant, then it is a reciprocity presheaf since $\omega^*F\in \CI$ and $F\simeq \omega_!\omega^*F$.
	For example, the Milnor $K$-theory sheaf $K^M_n$ is a reciprocity sheaf.
\end{example}

\begin{example}
	The sheaf of absolute differential forms $\Omega^n_{{-}/\Zbb}$ is a reciprocity sheaf (see \cite[Corollary 3.2.5]{KSY}).
\end{example}

\begin{example}
	This example is taken from \cite[Section 5]{RSY}.
	For $X\in \Sm$, we define its {\it first infinitesimal neighborhood of the diagonal} $\Pcal^1(X)$ by
	$$
		\Pcal^1(X):=\Gamma(X\times_{\Spec \Zbb} X,\Ocal_{X\times_{\Spec \Zbb} X}/\Ical_\Delta^2)
	$$
	where $\Ical_\Delta$ is the defining ideal of the diagonal.
	If $X=\Spec A$, then $\Pcal^1(X)$ is the quotient of $A\otimes_\Zbb A$ by the relation
	\begin{align*}
		&(c\otimes d)(a\otimes 1-1\otimes a)(b\otimes 1-1\otimes b)\\
		=&(c\otimes abd+abc\otimes d)-(bc\otimes ad + ac\otimes bd)=0.
	\end{align*}
	There is an isomorphism of presheaves $\Pcal^1\xrightarrow{\sim} \Gbb_a\oplus \Omega^1_{{-}/\Zbb}$ given by $a\otimes b\mapsto (ab,adb)$.
	Hence $\Pcal^1$ has a structure of a reciprocity sheaf.
\end{example}

\begin{example}\label{etale_unramified}
	Let $\varepsilon\colon (\Sm)_{\et}\to (\Sm)_\Nis$ be the morphism of sites given by the identity functor.
	For a prime number $\ell$ invertible in $k$, $R^n\varepsilon_*(\mu_\ell^{\otimes m}) = (X\mapsto H^n_\et(X,\mu_\ell^{\otimes m}))_\Nis$ is a reciprocity sheaf since it is $\Abb^1$-invariant by Theorem \ref{a1_invariant_sheafification}.
	If $\ch(k)=p>0$, then $R^n\varepsilon_*\Omega^m_{{-}/\mathbb{Z},\log}=(X\mapsto H^n_\et(X,\Omega^m_{{-}/\mathbb{Z},\log}))_\Nis$ is also a reciprocity sheaf.
	Indeed, we have an exact sequence of \'etale sheaves
	\begin{align}\label{Cartier_sequence}
		0\to \Omega^m_{{-}/\mathbb{Z},\log}\to \Omega_{{-}/\Zbb}^m\xrightarrow{1-C^{-1}}\Omega_{{-}/\Zbb}^m/d\Omega_{{-}/\Zbb}^{m-1}\to 0
	\end{align}
	where $C^{-1}$ is the inverse Cartier operator.
	Since the middle term and the right term are $R\varepsilon_*$-acyclic, we get $R\varepsilon_*\Omega^m_{{-}/\mathbb{Z},\log}\simeq (\Omega_{{-}/\Zbb}^m\xrightarrow{1-C^{-1}}\Omega_{{-}/\Zbb}^m/d\Omega_{{-}/\Zbb}^{m-1})$, which is a complex of reciprocity sheaves.
\end{example}

\begin{definition}
	Let us recall the notion of {\it unramified \'etale cohomology} introduced in \cite{OCT89}.
	Let $\Fcal$ be an \'etale sheaf on $\Sm$.
	For a connected $X\in \Sm$, its unramified \'etale cohomology with coefficients in $\Fcal$ is defined by
	$$
		H^n_\ur(X,\Fcal):=\bigcap_{x\in X^{(1)}}\Ker(\del_x\colon H^n_\et(\Spec k(X),\Fcal)\to H^{n+1}_{x,\et}(\Spec \Ocal_{X,x},\Fcal)).
	$$
\end{definition}

Since $H^{n+1}_{x,\et}(\Spec \Ocal_{X,x},\Fcal)\simeq H^{n+1}_{x,\et}(\Spec \Ocal_{X,x}^h,\Fcal)$, an element of $H^n_\et(\Spec k(X),\Fcal)$ lies in $H^n_\ur(X,\Fcal)$ if and only if its image in $H^n_\et(\Spec \Frac \Ocal_{X,x}^h,\Fcal)$ comes from $H^n_\et(\Spec \Ocal_{X,x}^h,\Fcal)$ for any $x\in X^{(1)}$.
In other words, $H^n_\ur(X,\Fcal)$ is isomorphic to the subgroup of $R^n\varepsilon_*\Fcal(k(X))$ consisting of elements whose image in $R^n\varepsilon_*\Fcal(\Frac \Ocal_{X,x}^h)$ come from $R^n\varepsilon_*\Fcal(\Ocal_{X,x}^h)$.
Since $R^n\varepsilon_*\Fcal$ is a Nisnevich sheaf and
$$
	\xymatrix{
		\Spec \Frac\mathcal{O}_{X,x}^h\ar[r]\ar[d]			&\Spec k(X)\ar[d]\\
		\Spec \mathcal{O}_{X,x}^h\ar[r]						&\Spec \mathcal{O}_{X,x}
	}
$$
is a cofiltered limit of elementary Nisnevich squares, we have a Cartesian square
$$
	\xymatrix{
		R^n\varepsilon_*\mathcal{F}(\mathcal{O}_{X,x})\ar[r]\ar[d]			&R^n\varepsilon_*\mathcal{F}(\mathcal{O}_{X,x}^h)\ar[d]\\
		R^n\varepsilon_*\mathcal{F}(k(X))\ar[r]					&R^n\varepsilon_*\mathcal{F}(\Frac\mathcal{O}_{X,x}^h).
	}
$$
Therefore the unramified \'etale cohomology can be rewritten as
$$
	H^n_\ur(X,\Fcal) \simeq \bigcap_{x\in X^{(1)}} \Im (R^n\varepsilon_*\Fcal(\Ocal_{X,x})\to R^n\varepsilon_*\Fcal(k(X))).
$$
In particular, we have
\begin{itemize}
	\item $H^n_\ur(X,\mu_\ell^{\otimes m})\simeq R^n\varepsilon_*(\mu_\ell^{\otimes m})(X)$ for a prime number $\ell$ invertible in $k$, and
	\item $H^n_\ur(X,\Omega^m_{{-}/\mathbb{Z},\log})\simeq R^n\varepsilon_*\Omega^m_{{-}/\mathbb{Z},\log}(X)$ if $\ch(k)=p>0$.
\end{itemize}
These isomorphisms can be extended to a general $X\in \Sm$ by defining $H^n_\ur(X,\Fcal):=\bigoplus_{i=1}^r H^n_\ur(X_i,\Fcal)$ where $X_1,\dots,X_r$ are the connected components of $X$.

\subsection{The category $\CI^{\tau,sp}_\Nis$}\label{def:CIrec}

To prove various theorems on reciprocity sheaves, it is often useful to embed $\RSC_\Nis$ into a certain subcategory of $\bMPST$.

An object $G$ of $\bMPST$ is called {\it semipure} if $G(\mathcal{X})\to G(X^\circ)$ is injective for any modulus pair $\mathcal{X}$.
An object $G$ of $\bMPST$ is said to have {\it $M$-reciprocity} if it is isomorphic to $\tau_!G'$ for some $G'\in \MPST$.
Let $\CIrec$ denote the full subcategory of $\bMNST$ consisting of cube-invariant semipure objects having $M$-reciprocity.

\begin{lemma}[{\cite[Theorem 11.1 and 10.1]{Sai20}}]\label{tilde_in_CIrec}
	For any $F\in \RSC_\Nis$, $\tau_!^\Nis\tilde{F}\in \CIrec$.
\end{lemma}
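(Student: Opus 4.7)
The plan is to verify the two defining conditions of $\CIrec$ for the Nisnevich sheaf $H := \tau_!^\Nis \tilde F$: first, that $H$ is semipure, and second, that $H$ lies in the essential image of $\tau_! \colon \CI \to \bMNST$.

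For semipurity, I would show that for every modulus pair $\Xcal$, the restriction map $H(\Xcal) \to H(X^\circ, \emptyset)$ is injective. Computing $\underline{\omega}_! H \simeq (\underline{\omega}_! \tau_! \tilde F)_\Nis = (\omega_! \tilde F)_\Nis = F$ using Lemma \ref{omega_sheafification} and the identity $\omega_! \tilde F = F$, this restriction is canonically identified with the map $H(\Xcal) \to F(X^\circ)$. The main ingredient is then Saito's purity theorem (Theorem \ref{purity}), which controls the local cohomology of $F$ along codimension-$1$ points of $|D_X|$ exactly as in the proof of Corollary \ref{reciprocity_unramified}. A Nisnevich-local analysis of $\Xcal$, combined with a spectral-sequence argument along the filtration of $X$ by divisorial supports of $D_X$, should pin the kernel of the restriction to zero.

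For the image-of-$\tau_!$ condition, I would set $G := \tau^* H \in \MPST$ and argue that the counit $\tau_! G \to H$ is an isomorphism in $\bMNST$ and that $G$ is cube-invariant. Cube-invariance of $G$ is a modulus analog of Voevodsky's Theorem \ref{a1_invariant_sheafification}: Nisnevich sheafification preserves cube-invariance for objects in the image of $\tau_!$. One attacks this by a spectral-sequence argument comparing the Nisnevich cohomology of $\tau_! \tilde F$ on $\Xcal$ versus on $\Xcal \boxtimes \cube$, leveraging Nisnevich-local acyclicity properties of $\cube$ against objects of this form.

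The hardest step is the second, controlling how Nisnevich sheafification interacts with $\tau_!$. Unlike $\underline{\omega}_!$, which commutes with sheafification by Lemma \ref{omega_sheafification}, the functor $\tau_!$ is defined as a colimit over compactifications $\mathbf{Comp}(\Xcal)$, and as remarked in the text, the operations $({-})_\Nis$ and $({-})_\Xcal$ do not commute in general. Overcoming this obstruction is the substantive content of \cite[Theorems 10.1, 11.1]{Sai20}, and it relies on geometric input about admissibility of correspondences under blow-ups along the boundary divisor.
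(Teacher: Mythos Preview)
The paper does not actually prove this lemma: it is recorded as a citation to \cite[Theorems~10.1 and~11.1]{Sai20}, with no argument given. So there is nothing to compare against beyond noting that both you and the paper ultimately defer to Saito.

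That said, your outline takes a slightly more circuitous route than what the cited theorems give. The key point in \cite{Sai20} is not that one finds some new $G\in\CI$ with $\tau_!G\simeq(\tau_!\tilde F)_\Nis$, but rather that $\tau_!\tilde F$ is \emph{already} a Nisnevich sheaf when $F\in\RSC_\Nis$; thus $\tau_!^\Nis\tilde F=\tau_!\tilde F$ and one may simply take $G=\tilde F$ itself. This bypasses the step where you set $G=\tau^*H$ and then argue separately that the counit is an isomorphism and that $G$ is cube-invariant. Semipurity is then also more direct: $\tilde F=h^0_\cube\omega^*F$ is a subobject of $\omega^*F$, hence semipure in $\MPST$, and one shows that $\tau_!$ carries this to semipurity in $\bMPST$. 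Your invocation of the purity theorem (Theorem~\ref{purity}) for semipurity is plausible in spirit but not the argument in \cite{Sai20}; the actual proof goes through a description of $\tau_!\tilde F(\Xcal)$ in terms of the motivic conductor (compare Theorem~\ref{conductor_to_sections}), which makes injectivity into $F(X^\circ)$ transparent. In short, your plan is not wrong, but it reinvents machinery that the cited result supplies more efficiently.
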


Note that we have $\underline{\omega}_!\tau_!^\Nis\tilde{F}\simeq F$ for $F\in \RSC_\Nis$ by Lemma \ref{omega_sheafification}.
In other words, we have $\tau_!^\Nis\tilde{F}(X)\simeq F(X)$ for any $X\in \Sm$.
Therefore we can deduce results on $F$ from results on $\tau_!^\Nis\tilde{F}\in \CIrec$.
Here, we quote two important results about $\CIrec$.
A modulus pair $\Xcal$ is called {\it log-smooth} if $X$ is smooth and $D_X$ is a strict normal crossing divisor.
An ambient morphism $\pi\colon \Ycal\to \Xcal $ is called a {\it cube-bundle} if there is a Zariski open covering $X=\bigcup_\lambda U_\lambda$ and an ambient isomorphism $\varphi_\lambda\colon (U_\lambda,D_X|_{U_\lambda})\boxtimes \cube \xrightarrow{\sim} (\pi^{-1}(U_\lambda),D_Y|_{\pi^{-1}(U_\lambda)})$ over $(U_\lambda,D_X|_{U_\lambda})$ for each $\lambda$.

\begin{theorem}[Cube-invariance, {\cite[Theorem 0.6]{Sai20}}]\label{cube_invariance}
	Let $\pi\colon \Ycal\to \Xcal$ be a cube-bundle, where $\Xcal, \Ycal$ are log-smooth modulus pairs.
	Let $G\in \CIrec$.
	Then
	$
		\pi^*\colon H^i_\Nis(\Xcal,G)\to H^i_\Nis(\Ycal,G)
	$
	is an isomorphism for $i\geq 0$.
\end{theorem}

\begin{theorem}[Logarithmic blow-up invariance, {\cite[Theorem 5.1]{Sai}}]\label{log_blowup}
	Let $\Xcal$ be a log-smooth modulus pair and $G\in \CIrec$.
	Suppose that $D_X$ is reduced.
	Let $Z\subset X$ be a smooth closed subscheme which is normal crossing to $D_X$, i.e. for each $x\in D_X$ there is a regular system of parameters $t_1,\dots,t_d$ at $x$ such that $Z=\{t_1=\dots=t_r=0\}$ and $D_X=\{t_1t_2\cdots t_s=0\}$ with $s\leq r$ near $x$.
	Let $\rho\colon \Bl_ZX\to X$ be the projection.
	Then there is an isomorphism
	$$H^i_\Nis(\Xcal,G)\xrightarrow{\sim}H^i_\Nis((\Bl_ZX,(\rho^*D_X)_\red),G)$$
	for $i\geq 0$ which makes the following diagram commutative:
	$$
	\xymatrix{
		&H^i_\Nis((\Bl_ZX,(\rho^*D_X)_\red),G)\ar[d]\\
		H^i_\Nis(\Xcal,G)\ar[r]\ar[ur]^-{\sim}			&H^i_\Nis((\Bl_ZX,\rho^*D_X),G).
	}
	$$
\end{theorem}

\subsection{Modulus fractions}

We introduce the notion of modulus fractions which is a generalization of modulus pairs.
A {\it modulus fraction} (resp. {\it proper modulus fraction}) is an object $(\Ycal\xrightarrow{f}\Xcal)$ of the functor category $\mathop{\mathrm{Fun}}(\Delta^1, \bMSmfin)$ (resp. $\mathop{\mathrm{Fun}}(\Delta^1, \MSmfin)$).
We write $\Xcal/\Ycal$ for $(\Ycal\xrightarrow{f}\Xcal)$  if the ambient morphism $f$ is clear from the context.
A modulus pair $\Xcal$ can be thought of as a modulus fraction $\Xcal/\emptyset$.
The {\it box product} of two modulus fractions are defined by
$$
	\dfrac{\Xcal}{\Ycal}\boxtimes \dfrac{\Xcal'}{\Ycal'} := \dfrac{\Xcal\boxtimes \Xcal'}{(\Xcal\boxtimes \Ycal')\sqcup (\Ycal\boxtimes \Xcal')}.
$$
For a modulus fraction $\Xcal/\Ycal$, we define $\Ztr(\Xcal/\Ycal)\in \bMPST$ by
$$
	\Ztr(\Xcal/\Ycal) := \Coker(\Ztr(\Ycal)\to \Ztr(\Xcal)).
$$
We have $\Ztr((\Xcal/\Ycal)\boxtimes (\Xcal'/\Ycal'))\simeq \Ztr(\Xcal/\Ycal)\otimes^\bMPST \Ztr(\Xcal'/\Ycal')$.
For $G\in \bMPST$ we write $G(\Xcal/\Ycal)$ for $\Hom_\bMPST(\Ztr(\Xcal/\Ycal),G)=\Ker(G(\Xcal)\to G(\Ycal))$, and define $G^{\Xcal/\Ycal}\in \bMPST$ by $G^{\Xcal/\Ycal}:=G((\Xcal/\Ycal)\boxtimes {-})$.
If $G\in \CIrec$ then $G^{\Xcal/\Ycal}\in \CIrec$ by \cite[Lemma 1.5(3)]{BRS}.
If $\Xcal/\Ycal$ is a proper modulus fraction, then we define $\Ztr(\Xcal/\Ycal)\in \MPST$ and $G(\Xcal/\Ycal)$ for $G\in \MPST$ by the same formula.
We simply write $h_0^\cube (\Xcal/\Ycal)$ for $h_0^\cube \Ztr(\Xcal/\Ycal)$ and similarly for $h_0$, $h_0^\Nis$.

\begin{definition}
	We define proper modulus fractions $\Gbb_m^+$, $\Wbb_n^+$ and $\Dbb_n^+$ as follows:
	\begin{align*}
		\Gbb_m^+&:=\dfrac{(\Pbb^1,\{0,\infty\})}{\{1\}},\quad\Wbb_n^+:=\dfrac{(\Pbb^1,(n+1)\{\infty\})}{\{0\}},\quad\Dbb_n^+:=(\Pbb^1,(n+1)\{0,\infty\}).
	\end{align*}
	We also define cube-invariant objects $\Gbb^\#:=h_0^\cube(\Gbb^+)$ for $\Gbb = \Gbb_m,\Wbb_n,\Dbb_n$.
	We write $\Gbb_a^+$ for $\Wbb_1^+$ and $\Gbb_a^\#$ for $\Wbb_1^\#$.
	Note that we have canonical morphisms of modulus fractions
	$$
		\mathbb{W}_n^+\leftarrow \mathbb{D}_n^+\to \mathbb{G}_m^+.
	$$
\end{definition}

\begin{proposition}\label{h0_of_fractions}
	Let $T$ denote the coordinate of $\Abb^1$.
	Let $X$ be an affine smooth scheme.
	\begin{enumerate}
		\item	There is an isomorphism $h_0(\Wbb_n^+)(X)\simeq \Wbb_n(X)$ given by the formula (\ref{rel_pic_fml_1}).
				In particular, we have $\omega_!^\Nis \Wbb_n^\#=h_0^\Nis(\Wbb_n^+)\simeq \Wbb_n$.
		
		\item	There is an isomorphism $h_{0}(\Gbb_m^+)(X)\simeq \Gbb_m(X)$ given by the formula (\ref{rel_pic_fml_2}).
				In particular, we have $\omega_!^\Nis \Gbb_m^\#=h_0^\Nis(\Gbb_m^+)\simeq \Gbb_m$.
		
		\item	There is an isomorphism $h_0(\Dbb_n^+)(X)\simeq \Dbb_n(X)$ which is given by the formula (\ref{rel_pic_fml_3}).
				In particular, we have $\omega_!^\Nis \Dbb_n^\#=h_0^\Nis(\Dbb_n^+)\simeq \Dbb_n$.
	\end{enumerate}
\end{proposition}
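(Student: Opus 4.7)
The three parts have the same structure: in each case the plan is to identify $h_0$ of the given modulus fraction with the relative Picard group computed in Proposition \ref{relative_picard}, and to read off the explicit formula from there. This is essentially the calculation of \cite[Lemma 2.2 and Remark 2.3]{BRS} for case (iii); cases (i) and (ii) are variants obtained by changing the modulus divisor. I outline the argument for (i).

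By Lemma \ref{suslin_homology} and the cokernel definition $\Ztr(\Wbb_n^+) = \Coker(\Ztr(\{0\}) \to \Ztr(\Pbb^1,(n+1)\{\infty\}))$, the group $h_0(\Wbb_n^+)(X)$ is the quotient of $\Cor(X,\Abb^1)$ by the subgroup generated by the constant correspondence $X\xrightarrow{0}\Abb^1$ together with the elements $i_0^*\alpha - i_1^*\alpha$ for $\alpha \in \bMCor(X\boxtimes\cube,(\Pbb^1,(n+1)\{\infty\}))$. Since $X$ is smooth, a prime correspondence in $\Cor(X,\Abb^1)$ is precisely an effective Cartier divisor on $\Pbb^1_X$ disjoint from $X\times\{\infty\}$ which is finite and surjective over a component of $X$; sending such a divisor $Z$ to $(\Ocal(Z),\id) \in \Pic(\Pbb^1_X,(n+1)\{\infty\})$ defines a natural homomorphism $\psi$ which sends the constant correspondence at $0$ to $[0]$. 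The core task is to show that $\psi$ descends to an isomorphism
\[
\bar\psi\colon h_0(\Wbb_n^+)(X) \xrightarrow{\sim} \Pic(\Pbb^1_X,(n+1)\{\infty\})/\Zbb[0];
\]
composing with Proposition \ref{relative_picard}(i) then produces formula (\ref{rel_pic_fml_1}). Surjectivity of $\bar\psi$ uses that $X$ is affine, so every line bundle on $\Pbb^1_X$ has the form $\Ocal(d)\otimes \pi^*L$ and hence every relative class can be represented by an effective divisor disjoint from $\infty$.

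The crux is the bijection between the modulus-admissibility condition on the source and the trivialization condition in the relative Picard group. After choosing a local parameter $T$ at $\infty$, admissibility of $\alpha$ translates into the congruence ``a defining function of $\alpha$ is $\equiv 1 \pmod{T^{n+1}}$ along the pullback of the cube divisor'', which is exactly what is required for the rational equivalence between $i_0^*\alpha$ and $i_1^*\alpha$ to respect the chosen trivializations on $(n+1)\{\infty\}$; this handles well-definedness of $\bar\psi$. Read in the other direction, the same local calculation takes a rational function $f\equiv 1 \pmod{T^{n+1}}$ witnessing a relative-Picard equivalence and produces an explicit admissible correspondence in $\bMCor(X\boxtimes\cube,(\Pbb^1,(n+1)\{\infty\}))$, yielding injectivity of $\bar\psi$. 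Parts (ii) and (iii) follow the same pattern with modulus $\{0,\infty\}$ (modding out $\Zbb[1]$) or $(n+1)\{0,\infty\}$ respectively; in case (iii) there is no mod-out because the modulus already controls the divisor at $0$. The ``in particular'' statements follow from the chain
\[
\omega_!^\Nis \Wbb_n^\# = \omega_!^\Nis h_0^\cube(\Wbb_n^+) = (h_0(\Wbb_n^+))_\Nis = h_0^\Nis(\Wbb_n^+),
\]
which holds by definitions, combined with the isomorphisms just established and the fact that $\Wbb_n$, $\Gbb_m$ and $\Dbb_n$ are representable by smooth group schemes, hence already Nisnevich sheaves.

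I expect the main technical obstacle to be the careful matching in the crux computation: verifying that the multiplicity $n+1$ in the modulus at $\infty$ corresponds exactly to the length-$n$ Witt truncation (so that one gets $\Wbb_n$ rather than $\Wbb_{n\pm 1}$), and checking that the correspondence built in the injectivity step is simultaneously admissible and left proper. These are local but delicate verifications, and are essentially the content of \cite[Lemma 2.2]{BRS} in case (iii).
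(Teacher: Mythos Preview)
Your approach is correct and follows essentially the same route as the paper: identify $h_0$ of the modulus fraction with Suslin homology with modulus, pass to the relative Picard group, and then apply Proposition~\ref{relative_picard}. The only difference is that where you sketch the identification of Suslin homology with modulus and the relative Picard group by hand, the paper simply invokes \cite[Theorem 1.1]{RY16} for that step.
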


\begin{proof}
	(i) For any affine smooth scheme $X$ we have
	$$
		h_0(\Wbb_n^+)(X) = h_0(\Pbb^1,(n+1)\{\infty\})(X)/h_0(\{0\})(X) = H_0^S(\Pbb^1_X/X,(n+1)\{\infty\})/\Zbb[0].
	$$
	By \cite[Theorem 1.1]{RY16}, this is isomorphic to $\Pic(\Pbb^1_X,(n+1)\{\infty\})/\Zbb[0]$ which is isomorphic to $\Wbb_n(X)$ by Proposition \ref{relative_picard} (i).
	(ii) and (iii) can be proved similarly.
\end{proof}

\begin{remark}
	The following diagram is commutative:
	$$
	\xymatrix{
		h_0^\Nis(\mathbb{W}_n^+)	\ar[d]^-\sim	&h_0^\Nis(\mathbb{D}_n^+)\ar[l]\ar[r]\ar[d]^-\sim			&h_0^\Nis(\mathbb{G}_m^+)\ar[d]^-\sim\\
		\mathbb{W}_n			&\mathbb{D}_n\ar@{->>}[l]_-{\pr_2}\ar@{->>}[r]^-{\pr_4}			&\mathbb{G}_m.
	}
	$$
	In particular, the top horizontal arrows are epimorphisms.
\end{remark}

\subsection{Ramification filtration}
In this subsection we recall the ramification filtration on reciprocity sheaves defined and studied in \cite{RS21}.

A {\it geometric discrete valuation field} (or {\it geometric DVF} for short) is a discrete valuation field $(K,v)$ over $k$ such that $\Ocal_v\simeq \Ocal_{X,x}$ holds for some $X\in \Sm$ and $x\in X^{(1)}$.
Similarly, {\it geometric henselian discrete valuation field} (or {\it geometric henselian DVF} for short) is defined to be a discrete valuation field $(L,v)$ over $k$ such that $\Ocal_v$ is a Henselization of some geometric DVF.
For a geometric DVF $(K,v)$ or a geometric henselian DVF $(L,v)$, we define $(\Ocal_v,\mathfrak{m}^{-j}):=(\Spec \Ocal_v, \Spec \Ocal_v/\mathfrak{m}_v^j)\in \bMCor^\pro$.

Let $(L,v)$ be a geometric henselian DVF and $G\in \CI$.
We have a sequence of homomorphisms
$$
\omega_!G(\Ocal_v)=\tau_!G(\Ocal_v)\to \tau_!G(\Ocal_v,\mathfrak{m}_v^{-1})\to \tau_!G(\Ocal_v,\mathfrak{m}_v^{-2})\to \dots\to \tau_!G(L)=\omega_!G(L).
$$
Note that $\omega_!^\Nis G$ is a reciprocity sheaf and hence $\omega_! G(\Ocal_v)\to \omega_! G(L)$ is injective.
We define the {\it conductor associated to $G$} by
$$
	c_L^G\colon \omega_!G(L)\to \Zbb_{\geq 0};\quad a\mapsto \min\{j\mid a\in \Im(\tau_!G(\Ocal_v,\mathfrak{m}_v^{-j})\to \omega_!G(L))\}.
$$
The value $c_L^G(a)$ can be thought of as the ``degree of ramification'' of $a$; $c_L^G(a)=0$ if and only if $a\in \omega_!G(\Ocal_v)$.
The next lemma is immediate from the definition:

\begin{lemma}
	Let $(L,v)$ be a geometric henselian DVF.
	If $\alpha\colon G\to G'$ is a morphism in $\CI$ and $a\in \omega_!G(L)$, then we have
	$c_L^{G'}(\alpha(a))\leq c_L^G(a)$.
\end{lemma}

\begin{lemma}\label{ramification_sum_ineq}
	Let $(L,v)$ be a geometric henselian DVF, $G_1,\dots,G_r\in \CI$ and $a_i\in \omega_!G_i(L)$.
	Write $G=h_0^\cube(\bigotimes_i^\MPST G_i)$.
	Then we have
	$$
		c_L^G(a_1\otimes\dots\otimes a_r)\leq \sum_{i=1}^r c_L^{G_i}(a_i).
	$$
\end{lemma}

\begin{proof}
	Set $j_i:=c_L^G(a_i)$.
	Let $b_i$ be an element of $\tau_!G_i(\Ocal_v, \mathfrak{m}_v^{-j_i})$ whose image in $\omega_!G_i(L)$ is $a_i$.
	Then $b_1\otimes \dots\otimes b_r\in \tau_!(\bigotimes_i^\MPST G_i)(\Ocal_v,\mathfrak{m}_v^{-\sum_{i=1}^r j_i})$, and the claim follows from this.
\end{proof}

If $F$ is a reciprocity sheaf, then we have a canonical lifting $\tilde{F}=h^0_{\cube}\omega^*F \in \CI$ of $F$ and $c_L^{\tilde{F}}$ gives a map $F(L)\to \Zbb_{\geq 0}$.
We simply write $c_L^F$ for this map and call it the {\it motivic conductor} of $F$.
For a proper modulus pair $\mathcal{X}$ and a morphism $\rho\colon \Spec L\to X$ from a geometric henselian DVF, any section $a\in \tilde{F}(\Xcal)$ satisfies the inequality $c_L^F(\rho^*a)\leq v_L(D_X)$.
The next theorem states the converse:

\begin{theorem}[{\cite[Theorem 4.15 (4)]{RS21}}]\label{conductor_to_sections}
	Let $\Xcal$ be a proper modulus pair and $F\in \RSC_\Nis$.
	Then we have
	$$
		\tau_!\tilde{F}(\Xcal) \simeq \biggl\{a\in F(X)\biggm| \begin{array}{l}
		c_L^F(\rho^*a)\leq v_L(D_X)\text{ for any geometric henselian DVF }(L,v)\\
		\text{and any }\rho\colon \Spec L\to X
		\end{array}
		\biggr\}.
	$$
\end{theorem}

$\Abb^1$-invariance of a reciprocity sheaf can be detected by the motivic conductor:

\begin{theorem}[{\cite[Corollary 4.36]{RS21}}]\label{a1_conductor}
	A reciprocity sheaf $F$ is $\Abb^1$-invariant if and only if $c_L^F\leq 1$ holds for any geometric henselian DVF $(L,v)$.
\end{theorem}

\section{Axioms and examples of symbols}

\subsection{General definition}

\begin{definition}\label{axiom}
Let $F\in \RSC_\Nis$ and $G_1,\dots,G_r\in \CI$.
A {\it $(G_1,\dots,G_r)$-symbol for $F$} is a family of multilinear maps
\begin{align}\label{symbol_eq}
	\varphi = \biggl\{\varphi_K\colon \prod_{i=1}^r \omega_!G_i(K)\to F(K)\biggr\}_{K\in \Field_k}
\end{align}
satisfying the following axioms:
\renewcommand{\labelenumi}{(Sym\arabic{enumi})}
\begin{description}
	\item[(Sym1)] $\varphi$ is natural in $K\in \Field_k$.
	\item[(Sym2)] Let $L/K$ be a finite extension in $\Field_k$ and let $f\colon \Spec L\to \Spec K$ denote the induced morphism.
	Then
	$$
		\varphi_K(a_1,\dots,f_*a_i,\dots,a_r)=f_*\varphi_L(f^*a_1,\dots,a_i,\dots,f^*a_r)
	$$
	holds for any $(a_1,\dots,a_r)\in \omega_!G_1(K)\times\dots \times \omega_!G_i(L)\times\dots\times \omega_!G_r(K)$.
	\item[(Sym3)] If $(K,v)$ is a geometric DVF, then the restriction of $\varphi_K$ to $\prod_{i=1}^r\omega_!^\Nis G_i (\Ocal_v)$ factors through $F(\Ocal_v)$.
	The resulting morphism $\varphi_{\Ocal_v}$ makes the following diagram commutative, where $k(v)$ is the residue field of $\mathcal{O}_v$:
	$$
	\xymatrix{
		\prod_{i=1}^r \omega_!G_i^\Nis(\Ocal_v) \ar[r]^-{\varphi_{\Ocal_v}}\ar[d]			&F(\Ocal_v)\ar[d]\\
		 \prod_{i=1}^r \omega_!G_i(k(v)) \ar[r]^-{\varphi_{k(v)}}						&F(k(v)).
	}
	$$
\end{description}
The datum $\varphi$ is called {\it bounded} if moreover the following condition holds:
\begin{description}
	\item[(Sym4)]For any geometric henselian DVF $(L,v)$ and $(a_1,\dots,a_r)\in \prod_{i=1}^r\omega_!G_i(L)$ we have
	$$
		c_L^F(\varphi_L(a_1,\dots,a_r))\leq \sum_{i=1}^r c_L^{G_i}(a_i).
	$$
\end{description}
\end{definition} 

\begin{definition}
	Let $F\in \RSC_\Nis$.
	An {\it (r,s)-symbol for $F$} is a $(\underbrace{\Gbb_a^\#,\dots,\Gbb_a^\#}_{r},\underbrace{\Gbb_m^\#,\dots,\Gbb_m^\#}_s)$-symbol for $F$.
	Since $\omega_!^\Nis{\Gbb_a^\#}\simeq \Gbb_a$ and $\omega_!^\Nis{\Gbb_m^\#}\simeq \Gbb_m$, this is a family of bilinear maps of the form
	$$
		\varphi_K\colon K^r\times (K^\times)^s\to F(K).
	$$
	We use the notation $\varphi_K({-},\dots,{-}|{-},\dots,{-})$ hereinafter.
	The {\it Steinberg conditions} for $\varphi$ are defined as follows:
	\begin{description}
		\item[(ST1)] For any $K\in \Field_k$ and $a\in K^\times\setminus\{1\}$, we have
			$$\varphi_K(\dots|\dots, a,\dots,1-a,\dots)=0.$$
		\item[(ST2)] For any $K\in \Field_k$, $a\in K^\times\setminus\{1\}$ and $c\in K^\times$, we have
			$$\varphi_K(\dots,ca,\dots|\dots,a,\dots)+\varphi_K(\dots,c(1-a),\dots|\dots, 1-a, \dots)=0.$$
		\item[(ST3)] For any $K\in \Field_k$ and $a,b,c,d\in K^\times$, we have
			\begin{align*}&\varphi_K(\dots,bc,\dots,ad,\dots|\dots)+\varphi_K(\dots,ac,\dots,bd,\dots|\dots)\\
					{}={} &\varphi_K(\dots,c,\dots,abd,\dots|\dots)_K + \varphi_K(\dots,abc,\dots,d,\dots|\dots).\end{align*}
	\end{description}	
\end{definition}

Here, the omitted components are assumed to be the same over summands.
The main result of this paper is the following:

\begin{theorem}\label{main}
	Let $\varphi$ be a bounded $(r,s)$-symbol for $F\in \RSC_\Nis$.
	Then $\varphi$ satisfies (ST1).
	If $\ch(k)\neq 2$, then it also satisfies (ST2) and (ST3).
\end{theorem}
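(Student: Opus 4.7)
The plan is to follow the two-step strategy outlined in the introduction. In step one, I show that a bounded $(r,s)$-symbol $\varphi$ extends uniquely to a morphism of reciprocity sheaves
$$\Phi\colon h_0^\Nis\bigl((\Gbb_a^+)^{\boxtimes r}\boxtimes(\Gbb_m^+)^{\boxtimes s}\bigr) \to F$$
sending the class naturally associated to $(a_1,\dots,a_r\mid b_1,\dots,b_s)$ to $\varphi_K(a_1,\dots,a_r\mid b_1,\dots,b_s)$. In step two, I verify that the Steinberg relations already hold in the universal target on the left, which by the construction of $\Phi$ forces the same relations for $\varphi$.

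For step one, axioms (Sym1) and (Sym3) are exactly the hypotheses of Morel's unramified sheaf theorem (Theorem \ref{unramified_lemma_Morel}), which is applicable because $F$, $\Gbb_a$, and $\Gbb_m$ are all unramified by Corollary \ref{reciprocity_unramified}. Applying Morel's theorem one variable at a time extends $\varphi_K$ to multilinear maps $\varphi_X\colon \Gbb_a(X)^r\times\Gbb_m(X)^s\to F(X)$ for every $X\in\Sm$. The external-product version provides data satisfying the hypothesis of Lemma \ref{tensor_lemma1}, with the transfer compatibility (Ext) supplied by (Sym2); these data assemble into a morphism $\Phi_0\colon \Gbb_a^{\otimes r}\otimes\Gbb_m^{\otimes s}\to F$ in $\NST$. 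To factor $\Phi_0$ through $h_0^\Nis((\Gbb_a^+)^{\boxtimes r}\boxtimes(\Gbb_m^+)^{\boxtimes s})$, I would invoke the boundedness axiom (Sym4) together with the sum-of-conductors estimate (Lemma \ref{ramification_sum_ineq}) and Theorem \ref{conductor_to_sections}: boundedness identifies the image of a tensor $a_1\otimes\cdots\otimes b_s$ as lying in $\tau_!^\Nis\tilde{F}(\mathcal{X})$ for the natural modulus pair $\mathcal{X}$ controlling its ramification, and passing to the colimit together with Lemma \ref{injectivity_factorization_lemma} promotes the field-wise factorization into one of sheaves.

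For step two, the relations (ST1) and (ST3) in $h_0^\Nis((\Gbb_a^+)^{\boxtimes r}\boxtimes(\Gbb_m^+)^{\boxtimes s})$ follow from the computation of \cite{RSY}, after identifying the Teichm\"uller-type generators via Proposition \ref{h0_of_fractions}. The main obstacle is (ST2) in characteristics $3$ and $5$, where the \cite{RSY} argument is unavailable. I would adapt the Hu--Kriz blow-up strategy: take $Y=\Bl_{(0,0),(\infty,\infty)}(\Pbb^1\times\Pbb^1)$ and build an explicit left-proper admissible finite correspondence from $\Spec K\boxtimes\cube$ to a modulus pair supported on $Y$ whose boundary $i_0^*-i_1^*$, via Lemma \ref{suslin_homology}, realizes precisely the relation (ST2). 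Saito's logarithmic blow-up invariance (Theorem \ref{log_blowup}) is the crucial tool that lets one replace $Y$ by the simpler box-product $\Gbb_a^+\boxtimes\Gbb_m^+$, and cube-invariance (Theorem \ref{cube_invariance}) disposes of auxiliary $\Abb^1$-factors; the hypothesis $\ch(k)\neq 2$ enters when symmetrizing a divisor on $Y$ under the involution $a\leftrightarrow 1-a$. The main technical challenge I anticipate is the explicit construction of this correspondence with the correct admissibility and boundary, and verifying that the surrounding moduli satisfy the hypotheses of Theorem \ref{log_blowup}.
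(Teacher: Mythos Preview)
Your Step~1 is essentially the paper's Lemma~\ref{geometric_symbol} and Theorem~\ref{bounded_Weil}, and your use of \cite{RSY} for (ST1) and (ST3) matches the paper exactly. Two minor corrections: the paper applies Morel's theorem once to the product sheaf $\prod_i \omega_!^\Nis G_i$ (unramified as a product of unramified sheaves of sets), not variable by variable; and the factorization through $h_0^\Nis$ is obtained not by a direct conductor argument but by proving that boundedness is equivalent to a Weil reciprocity law, after which \cite[Theorem~4.9]{RSY} identifies the kernel explicitly.

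The real divergence is in (ST2). The paper does \emph{not} build an explicit cube-homotopy realizing the relation. Instead it introduces two auxiliary symbols: $\varphi^A$ (precompose with multiplication $\Gbb_a^+\boxtimes\Gbb_a^+\to\Gbb_a^+$) reduces (ST2) to the special case (ST2${}'$) where $c=1$; then $\varphi^B$ (precompose with the diagonal $\Dbb_2^+\to\Gbb_a^+\boxtimes\Gbb_m^+$) repackages (ST2${}'$) as the assertion that $\varphi^B_K$ descends along the Teichm\"uller surjection $\Dbb_2(K)\twoheadrightarrow K$. The problem is then split in two: first show $\varphi^B$ descends to $\Wbb_2$, which is Theorem~\ref{main3} and is proved for all characteristics by a cohomological argument on $\Bl_{(0,0)}(\Pbb^1\times\Pbb^1)$ using Theorems~\ref{cube_invariance} and~\ref{log_blowup} together with the representability Lemma~\ref{CIrec_factorization_lemma}; then show it descends further from $\Wbb_2$ to $\Gbb_a$, which is Lemma~\ref{ch2_lemma_mixed} and is where $\ch(k)\neq 2$ actually enters, via the identity $2\cdot(0,a/2)=(0,a)$ in $\Wbb_2(K)$ and a square-root/trace trick. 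Your guess that $\ch(k)\neq 2$ appears ``when symmetrizing under $a\leftrightarrow 1-a$'' is not how the paper uses it.

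Your proposed route---an explicit admissible correspondence whose boundary is (ST2)---is in principle a valid strategy, but it is precisely the difficulty the paper sidesteps. The paper's cohomological argument never writes down such a correspondence; it instead shows that for every $G\in\CIrec$ the restriction $G(\Bcal'/(\Dcal'\sqcup\Ecal))\to G(\Bcal/\Dcal)$ is an isomorphism, and then invokes the Yoneda-type Lemma~\ref{CIrec_factorization_lemma} to obtain the factorization of $h_0^\Nis$. The introduction of big and double Witt vectors $\Wbb_n,\Dbb_n$ as intermediaries is the organizing idea you are missing; without it, verifying admissibility of a hand-built correspondence on the blow-up with the required modulus conditions at all boundary components is exactly the ``main technical challenge'' you flag, and the paper gives no indication that this can be done directly.
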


\subsection{Examples}

First we give useful criteria for (Sym2) and (Sym4).

\begin{lemma}\label{universal_element}
	Let $F\in \RSC_\Nis$.
	Let $\varphi$ be a datum (\ref{symbol_eq}) for $(\underbrace{\Gbb_a^\#,\dots,\Gbb_a^\#}_{r},\underbrace{\Gbb_m^\#,\dots,\Gbb_m^\#}_s)$ satisfying (Sym1).
	Suppose that there is a universal element $\omega\in F((\Abb^1)^r\times (\Abb^1\setminus\{0\})^s)$, i.e. an element whose pullback by $(a_1,\dots,a_r,b_1,\dots,b_s)\colon \Spec K\to (\Abb^1)^r\times (\Abb^1\setminus\{0\})^s$ equals to $\varphi_K(a_1,\dots,a_r|b_1,\dots,b_s)$.
	Then $\varphi$ satisfies (Sym2).
\end{lemma}

\begin{proof}
	Let $L/K$ be a finite extension and let $f$ denote the induced morphism $\Spec L\to \Spec K$.
	We assume that $r\geq 1$ and we prove
	$$
		\varphi_K(f_*a_1,a_2,\dots,b_s)=f_*\varphi_L(a_1,f^*a_2,\dots,f^*b_s)
	$$
	for $a_1\in L, a_2,\dots,a_r\in K$ and $b_1,\dots,b_s\in K^\times$; other cases are similar.
	We choose subfields $K'\subset K$ and $L'\subset L$ finitely generated over $k$ such that $a_1\in L',\,a_2,\dots,a_r\in K',\,b_1,\dots,b_s\in K'^\times$ and $L\simeq K\otimes_{K'}L'$.
	Replacing $K$ by $K'$ and $L$ by $L'$, we may assume from the beginning that $K$ and $L$ are finitely generated over $k$.
	
	Now choose connected smooth schemes $X$ and $Y$ such that $k(X)\simeq K$ and $k(Y)\simeq L$.
	Shrinking $X$ and $Y$ if necessary, we may assume that $a_1\in \Ocal_Y(Y)$, $a_2,\dots,a_r\in \Ocal_X(X)$, $b_1,\dots,b_r\in \Ocal_X(X)^\times$ and that there is a finite surjective morphism $f\colon Y\to X$ inducing $K\subset L$.
	Consider the following commutative diagram in $\Cor$:
	$$
		\xymatrix@C=90pt{
			X\ar[r]^-{({}^tf,\id,\dots,\id)}\ar[d]	^-{{}^tf}			&Y\times X\times \dots\times X\ar[r]	^-{a_1\times \dots\times a_r\times b_1\times \dots \times b_s}	&(\Abb^1)^r\times (\Abb^1\setminus\{0\})^s\\
			Y\ar[r]^-{(\id\dots,\id)}					&Y\times Y\times \dots \times Y\ar[u]^-{\id \times f\times \dots \times f}.
		}
	$$
	Comparing two ways for pulling back $\omega$ to $X$, we get
	$$
		(f_*a_1,a_2,\dots,b_s)^*\omega = f_*(a_1,f^*a_2,\dots,f^*b_s)^*\omega \in F(X).
	$$
	Restricting this equality to the function field, we get (Sym2).
\end{proof}

\begin{lemma}
	Let $G_1,\dots,G_r\in \CI$ and let $\varphi$ be a $(G_1,\dots,G_r)$-symbol for $F\in \RSC_\Nis$.
	If $F$ is $\Abb^1$-invariant, then $\varphi$ is bounded.
\end{lemma}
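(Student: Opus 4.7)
The plan is to exploit Theorem \ref{a1_conductor}, which under $\Abb^1$-invariance of $F$ yields $c_L^F \le 1$ for every henselian DVF $(L,v)$. Hence, for fixed $(L,v)$ and $(a_1,\dots,a_r) \in \prod_{i=1}^r \omega_!G_i(L)$, setting $s := \sum_{i=1}^r c_L^{G_i}(a_i)$, the inequality (Sym4) $c_L^F(\varphi_L(a_1,\dots,a_r)) \le s$ is immediate as soon as $s \ge 1$. All the content is thus in the case $s = 0$, where each $a_i$ lies in $\omega_!G_i(\Ocal_v)$, and the goal is to produce $\varphi_L(a_1,\dots,a_r) \in F(\Ocal_v)$.

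The obvious tool is axiom (Sym3), but it is stated only for geometric DVFs rather than for henselian ones. The first step is therefore to extend (Sym3) to $(L,v)$. I would do this by writing $\Ocal_v = \varinjlim_\alpha \Ocal_{X_\alpha, x_\alpha}$ as a filtered colimit along Nisnevich neighborhoods of a geometric DVF underlying $(L,v)$. Since the presheaves $\omega_!G_i$, $\omega_!^\Nis G_i$ and $F$ all prolong continuously to $\Sm^\pro$, the commutative squares in (Sym3) attached to each $(X_\alpha, x_\alpha)$ pass to the filtered colimit, producing the analogous diagram for $(\Ocal_v, L)$.

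Next, since $\omega_!G_i(L) = \omega_!^\Nis G_i(L)$ (a field is Nisnevich local) and since $\omega_!^\Nis G_i(\Ocal_v) \hookrightarrow \omega_!^\Nis G_i(L)$ by unramifiedness (Corollary \ref{reciprocity_unramified}), each element $a_i \in \omega_!G_i(\Ocal_v) \subset \omega_!G_i(L)$ admits a unique lift to $\omega_!^\Nis G_i(\Ocal_v)$. Applying the extended (Sym3) to these lifts yields an element of $F(\Ocal_v)$ whose image in $F(L)$ equals $\varphi_L(a_1,\dots,a_r)$, so $c_L^F(\varphi_L(a_1,\dots,a_r)) = 0 = s$, as required.

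The main, and only, obstacle is the colimit extension of (Sym3) described in the second paragraph. I expect it to be routine, since both sides of the square in (Sym3) are by construction continuous along filtered limits of schemes in $\Sm^\pro$; naturality (Sym1) in $K \in \Field_k$ likewise survives the colimit. Beyond this mild bookkeeping, the proof reduces entirely to the dichotomy $s = 0$ versus $s \ge 1$ afforded by Theorem \ref{a1_conductor}.
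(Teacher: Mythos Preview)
Your proposal is correct and follows essentially the same approach as the paper: both invoke Theorem \ref{a1_conductor} to reduce (Sym4) to the case $\sum_i c_L^{G_i}(a_i)=0$, and both handle that case via (Sym3) extended to henselian DVFs by a colimit argument. The paper compresses your second and third paragraphs into the phrase ``follows from (Sym3) and a limit argument''; your version simply unpacks that phrase.
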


\begin{proof}
	Let $(L,v)$ be a geometric henselian DVF.
	Since $F$ is $\mathbb{A}^1$-invariant, we have $c_L^F\leq 1$ by Theorem \ref{a1_conductor}.
	Therefore (Sym4) is equivalent to saying that $(a_1,\dots,a_r)\in \prod_{i=1}^r \omega_!G_i(\Ocal_v)$ implies $\varphi_K(a_1,\dots,a_r)\in F(\Ocal_v)$.
	We can write $\mathcal{O}_v$ as a filtered colimit of geometric DVFs, so the claim follows from (Sym3).
\end{proof}

We give some examples of $(0,s)$-symbols.

\begin{example}
	Consider the Milnor $K$-theory sheaf $K^M_s\in \RSC_\Nis$.
	We define the {\it canonical symbol} by 
	$$
		\varphi_K\colon (K^\times)^s \to K^M_s(K);\quad(a_1,\dots,a_s)\mapsto \{a_1,\dots,a_s\}.
	$$
	It clearly satisfies (Sym1) and (Sym3), and (Sym2) follows from the projection formula for the Milnor $K$-theory.
	Moreover, it is bounded since $K^M_s$ is $\mathbb{A}^1$-invariant.
	This symbol satisfies (ST1) by definition.
\end{example}

\begin{example}\label{differential}
	Consider the sheaf of logarithmic absolute differential forms $\Omega^s_{{-}/\Zbb,\log} \in \RSC_\Nis$.
	We define the {\it differential symbol} by
	$$
		\varphi_K\colon (K^\times)^s \to \Omega^s_{K/\Zbb,\log};\quad(a_1,\dots,a_s)\mapsto \dlog(a_1)\wedge \dots\wedge \dlog(a_s).
	$$
	It clearly satisfies (Sym1) and (Sym3), and (Sym2) follows from the existence of a universal element $\omega = \dlog(X_1)\wedge \dots\wedge \dlog(X_s) \in \Omega^s_{k[X_1^\pm,\dots,X_s^\pm]/\Zbb,\log}$.
	Moreover, it is bounded since $\Omega^s_{{-}/\Zbb,\log}$ is $\Abb^1$-invariant.
	This symbol satisfies (ST1):
	$$
		\dlog(a)\wedge \dlog(1-a)=\dfrac{-da\wedge da}{a(1-a)}=0.
	$$
\end{example}

\begin{example}
	Let $\ell$ be a prime number invertible in $k$.
	Consider the unramified \'etale cohomology sheaf $H^s_\ur({-},\mu_\ell^{\otimes s}) \in \RSC_\Nis$.
	The {\it norm residue symbol} is defined by
	$$
		\varphi_K\colon (K^\times)^s \to H^s_\ur(\Spec K, \mu_\ell^{\otimes s}) = H^s_\et(\Spec K, \mu_\ell^{\otimes s});\quad (a_1,\dots,a_s)\mapsto \del a_1\cup \dots \cup \del a_s
	$$
	where $\del \colon H^0_\et (\Spec K, \Gbb_m) \to H^1_\et (\Spec K, \mu_\ell)$ is the boundary homomorphism arising from the Kummer sequence.
	It clearly satisfies (Sym1) and (Sym3), and (Sym2) follows from the existence of a universal element $\del X_1\cup \dots \cup \del X_s \in H^s_\ur ((\Abb^1\setminus\{0\})^s, \mu_\ell^{\otimes s})$.
	Moreover, it is bounded since $H^s_\ur({-}, \mu_\ell^{\otimes s})$ is $\Abb^1$-invariant by Theorem \ref{a1_invariant_sheafification}.
	This symbol satisfies (ST1); see \cite[Theorem 3.1]{Tat76}.
\end{example}

Next we give examples of $(1,s)$-symbols.

\begin{example}\label{extended_differential}
	Consider the sheaf of absolute differential forms $\Omega^s_{{-}/\Zbb} \in \RSC_\Nis$.
	We define the {\it extended differential symbol} by
	$$
		\varphi\colon K\times (K^\times)^s \to \Omega^s_{K/\Zbb};\quad(a,b_1,\dots,b_s)\mapsto a\dlog(b_1)\wedge\dots\wedge \dlog(b_s).
	$$
	It clearly satisfies (Sym1) and (Sym3), and (Sym2) follows from the existence of a universal element $X\dlog(Y_1)\wedge\dots\wedge\dlog(Y_s)\in \Omega^s_{k[X,Y_1^\pm,\dots,Y_s^\pm]/\Zbb}$.
	We will prove later that it is bounded.
	This symbol satisfies (ST1) and (ST2):
	$$
		ca\dlog(a)+c(1-a)\dlog(1-a)=cda+cd(1-a)=0.
	$$
\end{example}

\begin{example}
	Suppose that $\ch(k)=p>0$.
	Consider the unramified \'etale cohomology sheaf $H^1_{\ur}({-},\Omega^s_{{-}/\mathbb{Z},\log}) \in \RSC_\Nis$.
	We define the {\it $p$-norm residue symbol} by
	\begin{align*}
		\varphi_K\colon K\times (K^\times )^s&\to H^1_\ur(\Spec K, \Omega^s_{{-}/\mathbb{Z},\log})=H^1_\et(\Spec K, \Omega^s_{{-}/\mathbb{Z},\log})\\
		\quad (a,b_1,\dots,b_s) &\mapsto \chi_a\cup \dlog (b_1)\wedge \dots\wedge \dlog (b_s)
	\end{align*}
	where $\chi_a \in H^1_\et (\Spec K, \Zbb/p) \simeq \Hom_{\rm cont}(\Gal(K^{\rm sep}/K),\Zbb/p)$ is the Artin-Schreier character of the extension $K(\alpha)/K$ where $\alpha^p-\alpha=a$.
\end{example}

\begin{lemma}
	Suppose that $\ch(k)=p>0$ and let $K$ be a field over $k$.
	Then the composition
	$$
		K\times (K^\times)^s\xrightarrow{\psi} \Omega^s_{K/\mathbb{Z}}\twoheadrightarrow \Omega^s_{K/\mathbb{Z}}/d\Omega^{s-1}_{K/\mathbb{Z}}\xrightarrow{\del}H^1_\et(\Spec K, \Omega^s_{{-}/\mathbb{Z},\log})
	$$
	coincides with the $p$-norm residue symbol, where $\psi$ is the extended differential symbol and $\del$ is the boundary homomorphism arising from the sequence (\ref{Cartier_sequence}).
	In particular, the $p$-norm residue symbol is bounded and satisfies (ST1) and (ST2).
\end{lemma}

\begin{proof}
	Let $b_1,\dots,b_s\in K^\times$.
	Then the multiplication by $\dlog(b_1)\wedge\dots\wedge \dlog(b_s)$ gives a morphism
	$\mathbb{G}_a\to \Omega^s_{K/\mathbb{Z}}$
	of \'etale sheaves on $\Spec K$.
	This induces a morphism
	$$
		\xymatrix{
			0\ar[r]			&\mathbb{Z}/p\ar[r]\ar[d]		&\mathbb{G}_a\ar[r]^-{a\mapsto a^p-a}\ar[d]		&\mathbb{G}_a\ar[r]\ar[d]			&0\\
			0\ar[r]			&\Omega^s_{{-}/\mathbb{Z},\log}\ar[r]		&\Omega^s_{{-}/\mathbb{Z}}\ar[r]^-{C^{-1}}&\Omega^s_{{-}/\mathbb{Z}}/d\Omega^{s-1}_{{-}/\mathbb{Z}}\ar[r]		&0
		}
	$$
	of exact sequence of \'etale sheaves on $\Spec K$, and hence yields a commutative diagram
	$$
		\xymatrix{
			K\ar[r]^-{a\mapsto \chi_a}\ar[d]_-{\wedge \dlog(b_1)\wedge\dots\wedge\dlog(b_s)}		&H^1_\et(\Spec K, \mathbb{Z}/p)\ar[d]^-{\wedge \dlog(b_1)\wedge\dots\wedge\dlog(b_s)}\\
			\Omega^s_{K/\mathbb{Z}}/d\Omega^{s-1}_{K/\mathbb{Z}}\ar[r]^-{\partial}	&H^1_\et(\Spec K, \Omega^s_{{-}/\mathbb{Z},\log}).
		}
	$$
	This proves the claim.
\end{proof}

The extended differential symbol has the following universal property:

\begin{lemma}\label{omega_universal}
	Let $\varphi$ be a $(1,s)$-symbol for $F\in \RSC_\Nis$.
	Then it satisfies (ST1) and (ST2) if and only if each $\varphi_K$ factors through the extended differential symbol $K\times (K^\times)^s\to \Omega^s_{K/\mathbb{Z}}$.
\end{lemma}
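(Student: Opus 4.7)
The ``if'' direction is immediate from Example \ref{extended_differential}: the extended differential symbol $\psi$ itself satisfies (ST1) and (ST2), so any composition with a group homomorphism $\Omega^s_{K/\Zbb}\to F(K)$ inherits these relations.

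For the converse, fix $K\in \Field_k$ and extend $\varphi_K$ and $\psi_K$ by $\Zbb$-multilinearity to homomorphisms $\bar\varphi_K\colon K\otimes_\Zbb (K^\times)^{\otimes s}\to F(K)$ and $\bar\psi_K\colon K\otimes_\Zbb (K^\times)^{\otimes s}\to \Omega^s_{K/\Zbb}$. The map $\bar\psi_K$ is surjective, since any generator $c\cdot db_1\wedge\dots\wedge db_s$ of $\Omega^s_{K/\Zbb}$ is either zero (when some $b_i=0$) or equals $\bar\psi_K(cb_1\cdots b_s\otimes b_1\otimes\dots\otimes b_s)$. Let $N\subset K\otimes_\Zbb (K^\times)^{\otimes s}$ be the subgroup generated by the (ST1) and (ST2) relations; by hypothesis and Example \ref{extended_differential}, both $\bar\varphi_K$ and $\bar\psi_K$ vanish on $N$. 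It thus suffices to show that the induced surjection $R:=(K\otimes_\Zbb (K^\times)^{\otimes s})/N\twoheadrightarrow \Omega^s_{K/\Zbb}$ is an isomorphism.

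To construct an inverse, I equip $R$ with the $K$-module structure $c\cdot [a\otimes b_1\otimes\dots\otimes b_s]:=[ca\otimes b_1\otimes\dots\otimes b_s]$, which preserves (ST1) and (ST2), and define $\delta\colon K^s\to R$ by $\delta(b_1,\dots,b_s):=[b_1\cdots b_s\otimes b_1\otimes\dots\otimes b_s]$ when all $b_i\in K^\times$, and $\delta=0$ otherwise. By the universal property of $\Omega^s_{K/\Zbb}$ as a module of K\"ahler differentials, a $K$-linear map $\Omega^s_{K/\Zbb}\to R$ sending $db_1\wedge\dots\wedge db_s\mapsto \delta(b_1,\dots,b_s)$ exists as soon as $\delta$ is alternating, additive, and Leibniz in each slot. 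Leibniz is immediate from multiplicativity in each $K^\times$-slot of the tensor product. Additivity, the heart of the argument, follows directly from (ST2): given $u,v\in K^\times$ with $u+v\in K^\times$ and remaining $b_j$'s in $K^\times$, substitute $a=u/(u+v)$ and $c=(u+v)\prod_{j\neq i}b_j$ into the $i$-th-slot instance of (ST2); expanding $u/(u+v)=u\cdot (u+v)^{-1}$ via multiplicativity in the $K^\times$-slot produces exactly $\delta(\dots,u+v,\dots)=\delta(\dots,u,\dots)+\delta(\dots,v,\dots)$.

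For alternating, I apply Milnor's classical manipulation inside $R$: the identity $-b=(1-b)/(1-b^{-1})$ combined with (ST1) gives $\{\dots,b,\dots,-b,\dots\}=0$, whence $\{\dots,b,\dots,b,\dots\}=-\{\dots,b,\dots,-1,\dots\}$. This vanishes in characteristic $2$ since $-1=1$; in other characteristics, (ST2) with $a=1/2$ forces $c\otimes\dots\otimes (1/2)\otimes\dots=0$, hence $c\otimes\dots\otimes 2\otimes\dots=0$ by multiplicativity, and then (ST2) with $a=-1$ yields $c\otimes\dots\otimes (-1)\otimes\dots=0$. The same vanishing handles the boundary case $u+v=0$ of additivity and the technical point that $\delta(\dots,c,\dots)=0$ for $c\in\ker(d\colon K\to \Omega^1_{K/\Zbb})$, needed to upgrade the $K$-level alternating property to the $\Omega^1$-level one (this last point also reduces, via Leibniz applied to $1=1\cdot 1$ and, in characteristic $p$, to $c^p=c\cdot c^{p-1}$, to a formal triviality). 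The main obstacle will be orchestrating this chain of auxiliary identities cleanly; the logical skeleton parallels Milnor's original proof that $K^M_\ast(K)$ modulo the Steinberg relation is the universal target for symbols satisfying $\{a,1-a\}=0$.
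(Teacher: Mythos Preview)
Your argument is correct and complete. The overall logical shape---showing that the surjection $R:=(K\otimes(K^\times)^{\otimes s})/N\twoheadrightarrow\Omega^s_{K/\Zbb}$ is an isomorphism---coincides with the paper's, but the two proofs diverge in how they establish this.

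The paper does not build the inverse map by hand. Instead it invokes \cite[Lemma~4.1]{BE03}, which identifies $\Omega^s_{K/\Zbb}$ with the quotient of $K\otimes\bigwedge^s K^\times$ by the (ST2)-relations, and then appeals to the classical fact from Milnor $K$-theory that the Steinberg relation forces the symbols on $(K^\times)^{\otimes s}$ to become alternating. So the paper's proof is two sentences of citations.

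Your approach, by contrast, is entirely self-contained: you verify directly that the map $\delta\colon K^s\to R$ is a derivation in each slot (Leibniz from multiplicativity, additivity from (ST2)) and alternating (from (ST1) via the Milnor identity $-b=(1-b)/(1-b^{-1})$, together with the (ST2)-consequence $[c\otimes\cdots\otimes(-1)\otimes\cdots]=0$). This is essentially an inline reproof of the Bloch--Esnault presentation plus the relevant Milnor $K$-theory manipulation. What your route buys is independence from \cite{BE03}; what the paper's route buys is brevity.

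Two small remarks. First, your ``technical point'' that $\delta(\dots,c,\dots)=0$ for $c\in\ker(d)$ is not an extra verification: once $\delta$ is a derivation in each slot, the universal property of $\Omega^1_{K/\Zbb}$ handles it automatically, and alternating on the generators $db$ propagates to all of $\Omega^1$ by the standard bilinearity-plus-diagonal argument. Second, there is a harmless sign slip: from $b=(-b)(-1)$ one gets $[\cdots b\cdots b\cdots]=[\cdots b\cdots(-1)\cdots]$, not its negative; either way the term vanishes.
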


This follows from the following
\begin{lemma}
	The kernel of the map $K\otimes (K^\times)^{\otimes s}\to \Omega^s_{K/\Zbb};\;a\otimes b_1\otimes \dots\otimes b_s\mapsto a\dlog(b_1)\wedge\dots\wedge \dlog(b_s)$ is generated by the elements of the form
	\begin{align}
		\label{st1}&a\otimes\cdots\otimes b\otimes \dots\otimes (1-b)\otimes \cdots,\\
		\label{st2}&(a\otimes a\otimes \cdots) +((1-a)\otimes (1-a)\otimes \cdots).
	\end{align}

\end{lemma}

\begin{proof}
	Let $H$ denote the kernel of the given map.
	We have already seen in Example \ref{differential} and Example \ref{extended_differential} that (\ref{st1}) and (\ref{st2}) belong to $H$.
	On the other hand, according to \cite[Lemma 4.1]{BE03}, $\Omega^s_{K/\mathbb{Z}}$ is the quotient of $K\otimes \bigwedge^sK^\times$ by elements of the form (\ref{st2}).
	Therefore it suffices to check that the kernel of $K\otimes (K^\times)^{\otimes s}\to K\otimes \bigwedge^s K^\times$ is contained in the subgroup generated by elements of the form (\ref{st1}).
	This is equivalent to saying that $a\otimes \{b,b\}=0$ holds in $K\otimes K_2^M(K)$.
	Note that we always have $\{b,b\}=\{b,-1\}$ in $K_2^M(K)$.
	If $\ch(k)=2$, then we have $\{b,-1\}=\{b,1\}=0$ in $K_2^M(K)$.
	If $\ch(k)\neq 2$, then we have
	$$
		a\otimes \{b,-1\}=\dfrac{a}{2}\otimes \{b,1\}=0
	$$
	in $K\otimes K_2^M(K)$.
	This proves the claim.
\end{proof}

The following is an example of a $(2,0)$-symbol.

\begin{example}\label{infinitesimal}
	Consider the first infinitesial neighborhood of the diagonal $\Pcal^1 \in \RSC_\Nis$.
	We define the {\it infinitesimal symbol} by
	$$
		\varphi_K\colon K\times K\to \Pcal^1(K);\quad(a,b)\mapsto [a\otimes b].
	$$
	It clearly satisfies (Sym1) and (Sym3), and (Sym2) follows from the existence of a universal element $[X\otimes Y]\in \Pcal^1(\mathbb{A}^2)$.
	We will prove later that it is bounded.
	It satisfies (ST3) by definition.
\end{example}

\section{Sheaf-theoretic interpretation}

In this section we provide a sheaf-theoretic interpretation for $(G_1,\dots,G_r)$-symbols.
First we note an obvious result about exterior products:

\begin{lemma}\label{tensor_lemma1}
	Let $G_1,\dots,G_r,F \in \NST$.
	Then a morphism $\alpha\colon \bigotimes_i^\NST G_i\to F$ induces, by the exterior product, a family of multilinear maps
	\begin{align}\label{tensor_lemma_eq1}
		\biggl\{\alpha_{X_1,\dots,X_r}\colon \prod_{i=1}^r G_i(X_i) \to F\biggl(\prod_{i=1}^r X_i\biggr)\biggr\}_{X_1,\dots,X_r\in \Sm}
	\end{align}
	with the following property:
	\begin{description}
		\item[(Ext)]	If $g\in \Cor(X_i, Y_i)$ is a finite correspondence, then we have
		$$\alpha_{X_1,\dots, X_r}(a_1,\dots,g^*a_i,\dots,a_r)=(\id\times \dots\times g\times \dots\times \id)^*\alpha_{X_1,\dots,Y_i,\dots,X_r}(a_1,\dots,a_r)$$
		for any $(a_1,\dots,a_r)\in G_1(X_1)\times\dots\times G_i(Y_i)\times \dots\times G_r(X_r)$.
	\end{description}
	Conversely, any such datum comes from a unique morphism $\alpha\colon \bigotimes_i^\NST G_i\to F$.
\end{lemma}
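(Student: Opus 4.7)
The plan is to prove this as a direct consequence of the universal property of the Day convolution tensor product on $\PST$, followed by passage to Nisnevich sheaves. The forward direction is immediate from the definition of the exterior product: for $a_i \in G_i(X_i)$ viewed by Yoneda as a morphism $a_i\colon \Ztr(X_i) \to G_i$, the exterior product $a_1 \boxtimes \cdots \boxtimes a_r$ is the tensor of these morphisms, using the identification $\bigotimes_i \Ztr(X_i) \simeq \Ztr(\prod_i X_i)$. Setting $\alpha_{X_1,\ldots,X_r}(a_1,\ldots,a_r) := \alpha \circ (a_1 \boxtimes \cdots \boxtimes a_r)$ produces a family that is multilinear in each slot, since the tensor product of morphisms is additive in each factor, and property (Ext) follows because $g^* a_i = a_i \circ g$ together with the functoriality of $\boxtimes$.

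For the converse, I would first build a morphism in $\PST$ and then sheafify. The key tool is the coend presentation of the Day convolution, which for $U \in \Sm$ gives
$$\Bigl(\bigotimes_i^\PST G_i\Bigr)(U) = \int^{(X_i)} \Cor\Bigl(U,\prod_i X_i\Bigr) \otimes_\Zbb G_1(X_1) \otimes_\Zbb \cdots \otimes_\Zbb G_r(X_r).$$
Concretely, this group is generated by symbols $\gamma \otimes a_1 \otimes \cdots \otimes a_r$ with $\gamma \in \Cor(U, \prod_i X_i)$ and $a_i \in G_i(X_i)$, modulo $\Zbb$-multilinearity in each $a_i$ and the per-factor dinaturality relations $((\id \times \cdots \times g \times \cdots \times \id) \circ \gamma) \otimes a_1 \otimes \cdots \otimes a_r = \gamma \otimes a_1 \otimes \cdots \otimes g^* a_i \otimes \cdots \otimes a_r$ for $g \in \Cor(X_i, Y_i)$ and $a_i \in G_i(Y_i)$. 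From the given data, I define a map of presheaves on the generators by $\gamma \otimes a_1 \otimes \cdots \otimes a_r \mapsto \gamma^* \alpha_{X_1,\ldots,X_r}(a_1,\ldots,a_r)$. The multilinearity relations are absorbed by the multilinearity of $\alpha_{X_1,\ldots,X_r}$, and each per-factor dinaturality relation is absorbed by (Ext) applied to the corresponding slot; naturality in $U$ is automatic since $\gamma$ composes on the right with correspondences.

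This yields a morphism $\bigotimes_i^\PST G_i \to F$ in $\PST$, and since $F$ is a Nisnevich sheaf and $\bigotimes_i^\NST G_i = (\bigotimes_i^\PST G_i)_\Nis$, the universal property of sheafification delivers the desired morphism $\alpha\colon \bigotimes_i^\NST G_i \to F$. Uniqueness is immediate because any $\alpha$ is determined by its values on the generators $\id \otimes a_1 \otimes \cdots \otimes a_r$ (i.e.\ on the exterior products themselves), and these values are prescribed by the data. The only point that requires care is verifying that the single-slot statement of (Ext) suffices for the coend's per-factor dinaturality relations; this is where the bookkeeping lives, but it follows directly from the fact that the relations can be imposed one slot at a time. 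No substantial obstacle arises—the lemma is a purely formal consequence of Yoneda and the construction of $\otimes^\PST$ as a left Kan extension.
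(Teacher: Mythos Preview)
Your argument is correct and is precisely an unpacking of what the paper means by ``immediate from the definition'': the tensor product $\otimes^\PST$ is the Day convolution extending $\times$ on $\Cor$, so its values are given by the coend you wrote, and the data (\ref{tensor_lemma_eq1}) with (Ext) is exactly a cocone for that coend (the single-slot form of (Ext) suffices because any morphism in $\Cor^r$ factors as a composite of morphisms that are the identity in all but one slot). The paper gives no further proof, so there is nothing to compare beyond noting that you have spelled out the formalities.
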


Our starting point is the following variation of this result; roughly speaking, it says that a morphism $\alpha\colon \bigotimes_i^\NST F_i\to G$ corresponds to a natural multilinear operation $\prod_{i=1}^r F_i(X) \to G(X)$ satisfying the ``projection formula''.

\begin{lemma}\label{tensor_operation}
	Let $G_1,\dots,G_r,F \in \NST$ and suppose that $F$ has global injectivity.
	Then a morphism $\alpha\colon \bigotimes_i^\NST G_i\to F$ induces, by the tensor product, a family of multilinear maps
	\begin{align}\label{tensor_lemma_eq2}
		\biggl\{\alpha_X\colon \prod_{i=1}^r G_i(X) \to F(X)\biggr\}_{X \in \Sm}
	\end{align}
	with the following properties:
	\begin{description}
		\item[(Ten1)] It is natural in $X\in \Sm$.
		\item[(Ten2)] If $f\colon Y\to X$ is a finite surjective morphism in $\Sm$, then we have
		$$
			\alpha_X(a_1,\dots,f_*a_i,\dots,a_r)=f_*\alpha_Y(f^*a_1,\dots,a_i,\dots, f^*a_r)
		$$
		for any $(a_1,\dots,a_r)\in G_1(X)\times\dots\times G_i(Y)\times \dots\times G_r(X)$.
	\end{description}
	Conversely, any such datum comes from a unique morphism $\alpha\colon \bigotimes_i^\NST G_i\to F$.
\end{lemma}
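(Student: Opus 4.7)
The plan is to reduce the statement to the exterior-product version Lemma \ref{tensor_lemma1} and then use the injectivity of $F$ to promote the two covariance properties (Ten1), (Ten2), which only directly involve morphisms and finite surjective pushforwards, to the axiom (Ext), which has to hold for every finite correspondence.

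For the forward direction, I will invoke Lemma \ref{tensor_lemma1} to extract the exterior product $\alpha^{\boxtimes}_{X_1,\dots,X_r}$ from $\alpha$ and set $\alpha_X := \Delta_X^*\alpha^{\boxtimes}_{X,\dots,X}$. Then (Ten1) is immediate. To derive (Ten2) for a finite surjective $f\colon Y\to X$, I plan to apply (Ext) with $g={}^tf$ in slot $i$ and verify by a direct computation in $\Cor$ that the two correspondences $X\to X^{i-1}\times Y\times X^{r-i}$ given by $(\id\times\cdots\times {}^tf\times\cdots\times\id)\circ\Delta_X$ and by $(f,\dots,\id,\dots,f)\circ{}^tf$ coincide (both being supported on $\{(f(y),\dots,y,\dots,f(y))\mid y\in Y\}$). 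Applying (Ext) to the graph of $f$ in each remaining slot and using $(f\times\cdots\times\id\times\cdots\times f)\circ\Delta_Y=(f,\dots,\id,\dots,f)$ will then identify this with $f_*\alpha_Y(f^*a_1,\dots,a_i,\dots,f^*a_r)$.

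For the reverse direction, I will define
\[
\alpha^{\boxtimes}_{X_1,\dots,X_r}(a_1,\dots,a_r) := \alpha_{X_1\times\cdots\times X_r}(\pr_1^*a_1,\dots,\pr_r^*a_r),
\]
so that by Lemma \ref{tensor_lemma1} it suffices to verify (Ext) for every $g\in\Cor(X_i,Y_i)$. Two cases are easy. If $g$ is the graph of a morphism $f\colon X_i\to Y_i$, then (Ext) reduces to (Ten1) applied to $\id\times\cdots\times f\times\cdots\times\id$, by a direct projection calculation. If $g={}^th$ for a finite surjective $h\colon Y_i\to X_i$, then setting $H:=\id\times\cdots\times h\times\cdots\times\id$ (also finite surjective), base change for the Cartesian square formed by $h$ and $\pr_i$ gives $\pr_i^*\circ h_*=H_*\circ\pr_i^*$, and (Ext) follows from (Ten2) applied to $H$ together with $H_*=({}^tH)^*$.

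The main obstacle will be to bootstrap these two cases to an arbitrary finite correspondence. For this, I will use the injectivity of $F$ to localize at each generic point $\eta$ of $X_1\times\cdots\times X_r$; after passing to the pro-object $\Spec k(\eta)\in\Sm^{\pro}$ and extending $(\alpha_X)$ and $\alpha^{\boxtimes}$ to $\Sm^{\pro}$ by filtered colimits, the description of $\Cor^{\pro}$ in \cite[Lemma 2.5]{RS21} writes $g|_\eta$ as a $\Zbb$-linear combination of primes of the form ``closed immersion $\Spec k(y)\to Y_i$ composed with the transpose of the finite extension $k(y)/k(\eta)$''. Thus at the generic fibre, (Ext) follows from the two cases above by additivity, and injectivity transfers the identity back to $F(X_1\times\cdots\times X_r)$. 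The hard part is ensuring that the pro-object formalism is fully compatible with the candidate $\alpha^{\boxtimes}$ and with (Ten1), (Ten2); this is essentially routine bookkeeping but is precisely where the injectivity hypothesis on $F$ is used.
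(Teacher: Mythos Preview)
Your proposal is correct and follows essentially the same strategy as the paper: pass between the exterior-product and tensor-product data via Lemma \ref{tensor_lemma1}, verify (Ten2) from (Ext) by the commutative square in $\Cor$ comparing $(\id\times\cdots\times{}^tf\times\cdots\times\id)\circ\Delta_X$ with $(f,\dots,\id,\dots,f)\circ{}^tf$, and in the converse direction factor an arbitrary correspondence into a morphism composed with a transpose of a finite surjection, then invoke injectivity of $F$.

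The only substantive difference is in how this factorization is achieved. You pass to the generic point of $X_1\times\cdots\times X_r$ as a pro-object and use that over a field every prime correspondence splits as (closed immersion)${}\circ{}$(transpose of a finite extension). The paper instead stays inside $\Sm$: for a prime correspondence $V\subset X_i\times Y_i$ with projections $p\colon V\to X_i$, $q\colon V\to Y_i$, perfection of $k$ gives a dense open $U_i\subset X_i$ with $p^{-1}(U_i)$ smooth, so that $g|_{U_i}=q\circ{}^tp$ already in $\Cor$; then (Ten1) and (Ten2) yield (Ext) after restriction along $\id\times\cdots\times j\times\cdots\times\id$, and injectivity of $F$ removes the restriction. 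This avoids the pro-object bookkeeping you identify as the ``hard part'' and makes the proof shorter, though your route would also go through once that bookkeeping is carried out.
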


\begin{proof}
	We prove that a datum (\ref{tensor_lemma_eq1}) satisfying (Ext) is equivalent to a datum (\ref{tensor_lemma_eq2}) satisfying (Ten1) and (Ten2).
	Consider a datum (\ref{tensor_lemma_eq1}) satisfying (Ext).
	First we prove that
	$\alpha_X(a_1,\dots,a_r) := \Delta_X^*\alpha_{X,\dots,X}(a_1,\dots,a_r)$
	gives a datum (\ref{tensor_lemma_eq2}) satisfying (Ten1) and (Ten2).
	(Ten1) can be easily deduced from (Ext).
	To prove (Ten2), consider the following commutative diagram in $\Cor$:
	$$
		\xymatrix@C=60pt{
			X\ar[d]^-{{}^tf}\ar[r]^-{\Delta_X}		&X\times\dots \times X\ar[d]^-{\id\times \dots\times {}^tf\times \dots\times \id}\\
			Y\ar[r]^-{(f,\dots,\id,\dots,f)}					&X\times \dots\times Y\times \dots \times X.
		}
	$$
	Using (Ext) we can calculate as follows:
	\begin{align*}
			&\alpha_X(a_1,\dots,f_*a_i,\dots,a_r)\\
		=	&\Delta_X^*(\id\times\dots\times f\times\dots\times\id)_*\alpha_{X,\dots,Y,\dots,X}(a_1,\dots,a_i,\dots,a_r)\\
		=	&f_*(f,\dots,\id,\dots,f)^*\alpha_{X,\dots,Y,\dots,X}(a_1,\dots,a_i,\dots,a_r)\\
		=	&f_*\alpha_Y(f^*a_1,\dots,a_i,\dots,f^*a_r).
	\end{align*}
	
	Conversely, consider a datum (\ref{tensor_lemma_eq2}) satisfying (Ten1) and (Ten2).
	We will prove that
	$\alpha_{X_1,\dots,X_r}(a_1,\dots,a_r) := a_{X_1\times\dots\times X_r}(\pr_1^*a_1,\dots,\pr_r^*a_r)$
	gives a datum (\ref{tensor_lemma_eq1}) satisfying (Ext).
	We may assume $g=[V]$ where $V$ is an elementary correspondence from $X_i$ to $Y_i$.
	Moreover, we may assume that $X_i$ and $Y_i$ are connected.
	Since $k$ is perfect, there is a dense open subset $U_i\subset X_i$ such that $p^{-1}(U_i)$ is smooth, where $p\colon V\to X_i$ is the projection.
	Let $j\colon U_i\to X_i$ be the inclusion and $q\colon p^{-1}(U_i)\to Y_i$ the projection.
	Then we have the following commutative diagram in $\Cor$:
	$$
		\xymatrix{
			U_i\ar[r]^-j	\ar[d]^-{{}^tp}			&X_i\ar[d]^-{g}\\
			p^{-1}(U_i)\ar[r]^-q				&Y_i.
		}
	$$
	Using (Ten1) and (Ten2) we can calculate as follows:
	\begin{align*}
			&(\id\times\dots\times j\times \dots\times \id)^*\alpha_{X_1,\dots,X_r}(a_1,\dots,g^*a_i,\dots,a_r)\\
		=	&\alpha_{X_1\times\dots\times U_i\times \dots \times X_r}(\pr_1^*a_1,\dots,\pr_i^*j^*g^*a_i,\dots,\pr_r^*a_r)\\
		=	&\alpha_{X_1\times\dots\times U_i\times \dots \times X_r}(\pr_1^*a_1,\dots,\pr_i^*p_*q^*a_i,\dots,\pr_r^*a_r)\\
		=	&(\id\times\dots\times p\times \dots\times \id)_*(\id\times\dots\times q\times \dots \times \id)^*\alpha_{X_1,\dots,Y_i,\dots X_r}(a_1,\dots,a_r)\\
		=	&(\id\times\dots\times j\times \dots\times \id)^*(\id\times\dots\times g\times \dots\times \id)^*\alpha_{X_1,\dots,Y_i,\dots X_r}(a_1,\dots,a_r).
	\end{align*}
	Since $F$ has global injectivity, $(\id\times\dots\times j\times \dots\times \id)^*$ is injective and hence (Ext) holds.
\end{proof}

\begin{lemma}\label{geometric_symbol}
	Let $G_1,\dots,G_r \in \CI$ and let $\varphi$ be a $(G_1,\dots,G_r)$-symbol for $F\in \RSC_\Nis$.
	Then $\varphi$ extends uniquely to a morphism $\bigotimes^\NST\omega_!^\Nis G_i\to F$ in $\NST$.
\end{lemma}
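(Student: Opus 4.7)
The plan is to build the morphism $\bigotimes^\NST \omega_!^\Nis G_i \to F$ from the field data $\varphi_K$ in two stages: first extend to a morphism of sheaves of sets using Morel's unramified extension theorem, then package this as a morphism of tensor sheaves via Lemma \ref{tensor_operation}. Write $\bar{G}_i := \omega_!^\Nis G_i$, which lies in $\RSC_\Nis$ since $\omega_!^\Nis$ sends $\CI$ to $\RSC_\Nis$. By Corollary \ref{reciprocity_unramified} both $\bar{G}_i$ and $F$ are unramified, so the product sheaf of sets $H := \bar{G}_1 \times \cdots \times \bar{G}_r$ is unramified as well; and since $\bar{G}_i(K) = \omega_! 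G_i(K)$ for each $K \in \Field_k$, one may regard $\{\varphi_K\}$ as a family of maps $H(K) \to F(K)$, natural in $K$ by (Sym1).

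I would then apply Morel's extension theorem (Theorem \ref{unramified_lemma_Morel}) to this family. Its hypothesis is that, for every connected $X \in \Sm$ and every $x \in X^{(1)}$, the map $\varphi_{k(X)}$ sends $H(\Ocal_{X,x})$ into $F(\Ocal_{X,x})$ and is compatible with reduction to the residue field $k(x)$; this is exactly axiom (Sym3) applied to the geometric DVF $(k(X), v_x)$. The output is a natural family of multilinear maps
$$
\alpha_X \colon \prod_{i=1}^r \bar{G}_i(X) \longrightarrow F(X), \qquad X \in \Sm,
$$
that extends $\varphi_K$ on function fields, and this supplies property (Ten1) of Lemma \ref{tensor_operation}. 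For (Ten2), given a finite surjective morphism $f \colon Y \to X$ of connected smooth schemes, both sides lie in $F(X)$, and by injectivity of $F$ it suffices to compare them after restriction to $F(k(X))$. Under this restriction $f_* = ({}^t f)^*$ on any Nisnevich sheaf with transfers reduces to $\Tr_{k(Y)/k(X)}$, since ${}^t f$ restricts over the generic point of $X$ to the transpose of the finite field extension $k(X) \subset k(Y)$. The ensuing identity on generic points is precisely axiom (Sym2), and the general disconnected case reduces to this one componentwise.

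Finally, Lemma \ref{tensor_operation} converts the datum $\{\alpha_X\}$ into the required morphism $\bigotimes^\NST \bar{G}_i \to F$. Uniqueness is automatic: any such extension corresponds under Lemma \ref{tensor_operation} to a family $\{\alpha'_X\}$ whose restriction to each function field recovers $\varphi_K$, and injectivity of $F$ then forces $\alpha'_X = \alpha_X$. The one point demanding mild care is the verification that $\varphi_{k(X)}(\vec{a}|_{k(X)})$ actually lies in $F(X)$ for $\vec{a} \in \prod_i \bar{G}_i(X)$; this is where unramifiedness of $F$ meets (Sym3) at every height-one point, and I do not foresee any serious obstacle beyond correctly orchestrating these ingredients.
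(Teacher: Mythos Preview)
Your proof is correct and follows essentially the same strategy as the paper: extend the field-level data to a morphism of sheaves of sets via Morel's unramified extension theorem (using (Sym3)), then invoke Lemma \ref{tensor_operation} together with injectivity of $F$ and (Sym2) to obtain the tensor morphism. The only minor omission is that multilinearity of $\alpha_X$ (not just (Ten2)) must also be checked after extension, but this follows by the same injectivity argument you used for (Ten2).
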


\begin{proof}
	By Lemma \ref{tensor_operation}, it suffices to prove that there is a family of multilinear maps
	$$
		\biggl\{\varphi_X\colon \prod_{i=1}^r \omega_!^\Nis G_i(X)\to F(X)\biggr\}_{X\in \Sm}
	$$
	satisfying (Ten1) and (Ten2).
	Multilinearity of $\varphi_X$ and (Ten2) follows automatically from multilinearity of $\varphi$ and (Sym2) since $F$ has global injectivity.
	Hence we have only to construct a datum with (Ten1), i.e. a morphism $\prod_{i=1}^r \omega_!^\Nis G_i \to F$ of sheaves of sets.
	Since each $\omega_!^\Nis G_i$ is in $\RSC_\Nis$, $\prod_{i=1}^r \omega_!^\Nis G_i$ is an unramified sheaf of sets.
	Therefore, by Theorem \ref{unramified_lemma_Morel}, it suffices to show that for any $X\in \Sm$ and any $x\in X^{(1)}$, the image of $\prod_{i=1}^r\omega_!^\Nis G_i (\Ocal_{X,x})$ under $\varphi_{\Frac \Ocal_{X,x}}$ is contained in $F(\Ocal_{X,x})$ and the diagram
	$$
	\xymatrix{
		\prod_{i=1}^r\omega_!^\Nis G_i(\Ocal_{X,x})\ar[r]^-{\varphi_{\Ocal_{X,x}}}\ar[d]		&F(\Ocal_{X,x})\ar[d]\\
		\prod_{i=1}^r\omega_! G_i (k(x))\ar[r]^-{\varphi_{k(x)}}					&F(k(x))
	}
	$$
	is commutative.
	This is guaranteed by (Sym3).
\end{proof}

Conversely, it is easy to see that any morphism $\bigotimes^\NST\omega_!^\Nis G_i\to F$ in $\NST$ gives rise to a $(G_1,\dots,G_r)$-symbol for $F$.
We will use this one-to-one correspondence implicitly hereinafter.
Next we will characterize bounded symbols in terms of the corresponding morphisms in $\NST$.

\begin{definition}
	Let $G_1,\dots,G_r\in \CI$ and $K\in \Field_k$.
	A {\it Weil datum} for $G_1,\dots,G_r$ over $K$ consists of
	\begin{enumerate}
		\item a connected regular projective curve $C$ over $K$,
		\item effective Cartier divisors $D_1,\dots,D_r$ on $C$,
		\item $a_i\in \tau_!G_i(C,D_i)$ for $i=1,\dots,r$, and
		\item a finite $K$-morphism $f\colon C\to \mathbb{P}^1_K$ with $f\equiv 1\mod D:=D_1+\dots+D_r$.
	\end{enumerate}
	We say that a symbol $\varphi \colon \bigotimes^\NST \omega_!^\Nis G_i\to F$ satisfies the {\it Weil reciprocity} if for any $K\in \Field_k$ and any Weil datum for $G_1,\dots,G_r$ over $K$, we have
	$$
		\sum_{x\in (C\setminus |D|)^{(1)}}v_x(f)\cdot \Tr_{k(x)/K}\varphi_{k(x)}(a_1(x), \dots, a_r(x))=0.
	$$
\end{definition}

Now observe that $h_0^\Nis(\bigotimes_{i}^\MPST G_i)$ is a quotient of $\bigotimes^\NST\omega_!^\Nis G_i$; indeed, we have $h_0^\Nis(\bigotimes_{i}^\MPST G_i)=\omega_!^\Nis h_0^\cube(\bigotimes_{i}^\MPST G_i)$ and $\bigotimes^\NST\omega_!^\Nis G_i\simeq \omega_!^\Nis(\bigotimes_{i}^\MPST G_i)$.

\begin{theorem}\label{bounded_Weil}
	Let $F\in \RSC_\Nis$, $G_1,\dots,G_r \in \CI$ and $\varphi\colon \bigotimes^\NST\omega_!^\Nis G_i\to F$.
	Then the following conditions are equivalent:
	\begin{enumerate}
	\item $\varphi$ factors through $h_0^\Nis(\bigotimes_{i}^\MPST G_i)$.
	\item $\varphi$ is bounded.
	\item $\varphi$ satisfies the Weil reciprocity.
	\end{enumerate}
\end{theorem}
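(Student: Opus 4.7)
The plan is to exploit the adjunction
\[
\Hom_\NST(h_0^\Nis(\bigotimes^\MPST G_i), F) \simeq \Hom_\MPST(\bigotimes^\MPST G_i, \tilde{F})
\]
obtained from $h_0^\Nis = \omega_!^\Nis \circ h_0^\cube$ together with the facts that $h_0^\cube$ is left adjoint to the inclusion $\CI \hookrightarrow \MPST$ and $\tilde{F}$ is right adjoint to $\omega_!^\Nis$. Under this identification, (i) amounts to extending $\varphi$ to a natural family of multilinear maps $\prod_i G_i(\Xcal_i) \to \tilde{F}(\boxtimes_i \Xcal_i)$. The implication (i) $\Rightarrow$ (ii) is then formal: the resulting lift $\psi\colon h_0^\cube(\bigotimes^\MPST G_i) \to \tilde{F}$ lives in $\CI$, Lemma \ref{ramification_sum_ineq} bounds the conductor of $a_1 \otimes \dots \otimes a_r$ in $h_0^\cube(\bigotimes^\MPST G_i)$ by $\sum_i c_L^{G_i}(a_i)$, and morphisms in $\CI$ are non-increasing on conductors. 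For (ii) $\Rightarrow$ (i), I would use Lemma \ref{injectivity_factorization_lemma} together with the injectivity of $F$ (Corollary \ref{reciprocity_unramified}) to reduce to constructing the lift on each modulus pair, and then apply Theorem \ref{conductor_to_sections}: for $(a_i) \in \prod_i G_i(\Xcal_i)$, the element $b := \varphi(a_1|_{X_1^\circ}, \dots, a_r|_{X_r^\circ}) \in F(\prod_i X_i^\circ)$ lifts to $\tau_!\tilde{F}(\boxtimes_i \Xcal_i)$ because the required conductor bound at each henselian DVF arising from a codimension-one point is exactly (Sym4) after restriction, using the estimate $c_L^{G_i}(\rho_i^* a_i) \leq v_L(\bar\rho_i^* D_{X_i})$ coming from the very definition of the conductor.

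For (ii) $\Leftrightarrow$ (iii), I would read off the Weil reciprocity as the field-level incarnation of the $(i_0^* - i_1^*)$-relations defining $h_0$ in Lemma \ref{suslin_homology}. Concretely, for $K \in \Field_k$, the kernel of $\omega_!(\bigotimes^\MPST G_i)(K) \to h_0(\bigotimes^\MPST G_i)(K)$ is generated by images of left proper admissible correspondences $\Spec K \boxtimes \cube \to \boxtimes_i \Xcal_i$; a standard reduction via the normalizations of closures of such correspondences (regular projective curves over $K$), together with the isomorphism $(\mathbb{P}^1, \{1\}) \simeq \cube$, matches such data bijectively with Weil data $(C, D_i, a_i, f)$ satisfying $f \equiv 1 \mod D$, and the resulting relation is precisely the Weil reciprocity sum $\sum_x v_x(f)\, \Tr_{k(x)/K}(a_1(x) \otimes \dots \otimes a_r(x))$. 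Consequently, (i) and (iii) become equivalent at field level, which suffices by Lemma \ref{injectivity_factorization_lemma}.

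The main obstacle I anticipate is the rigorous setup of the dictionary in the previous step between field-level Suslin-complex relations and Weil data: one must verify both that each Weil datum does arise from an honest modular $\cube$-correspondence with the prescribed admissibility and left-properness, and that such curve-theoretic presentations exhaust all relations in $h_0$ rather than merely lying among them. The semipurity of $\tau_!^\Nis \tilde{F}$ (Lemma \ref{tilde_in_CIrec}), the explicit computations of $h_0$ of cube-like modulus fractions in Proposition \ref{h0_of_fractions}, and the conductor characterization in Theorem \ref{conductor_to_sections} should all feed in, but the combinatorial matching of pullbacks of Cartier divisors against admissibility conditions is the delicate technical point to nail down.
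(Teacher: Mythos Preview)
Your approach is correct and uses the same key inputs, but the paper organizes the argument more economically as a cycle (i) $\Rightarrow$ (ii) $\Rightarrow$ (iii) $\Rightarrow$ (i), which makes your separate direct argument for (ii) $\Rightarrow$ (i) unnecessary. Your (i) $\Rightarrow$ (ii) matches the paper exactly. For (ii) $\Rightarrow$ (iii), the paper applies boundedness plus Theorem~\ref{conductor_to_sections} only to the single curve $(C,D)$ coming from a Weil datum to get $\varphi(a_1\otimes\dots\otimes a_r)\in \tau_!\tilde{F}(C,D)$, and then reads off the vanishing of the Weil sum as $(i_0^*-i_\infty^*)({}^tf)^*\varphi(a_1\otimes\dots\otimes a_r)=0$ directly from cube-invariance of $\tilde{F}$, using that $f\equiv 1\bmod D$ means ${}^tf\in \bMCor^\pro((\Pbb^1_K,\{1\}),(C,D))$; this is the one-curve specialization of your general (ii) $\Rightarrow$ (i) idea. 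For (iii) $\Rightarrow$ (i), your ``main obstacle''---that Weil data exhaust the field-level relations defining $h_0$---is exactly \cite[Theorem~4.9]{RSY}, which the paper simply cites and then invokes Lemma~\ref{injectivity_factorization_lemma}; so you need not set up the dictionary yourself. Your direct (ii) $\Rightarrow$ (i) via Theorem~\ref{conductor_to_sections} on arbitrary proper box products does work (semipurity of $\tau_!^\Nis\tilde{F}$ forces the lifted multilinear family to satisfy (Ext) automatically, and the adjunction then gives the morphism in $\MPST$), but your invocation of Lemma~\ref{injectivity_factorization_lemma} there is misplaced: that lemma concerns field-level factoring through a quotient in $\NST$, whereas what you actually use is the adjunction $\omega_!^\Nis h_0^\cube \dashv \tilde{({-})}$ together with the conductor description of $\tau_!\tilde{F}(\Xcal)$.
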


\begin{proof}
	(i)$\implies$(ii) The morphism $h_0^\Nis(\bigotimes_{i}^\MPST G_i)\to F$ corresponds to a morphism $\bigotimes_{i}^\MPST G_i\to \tilde{F}$ in $\MPST$ by adjunction.
	The claim follows from this and Lemma \ref{ramification_sum_ineq}.
	
	(ii)$\implies$(iii) Let $K\in \Field_k$ and consider a Weil datum over $K$.
	Then we have $\varphi(a_1\otimes \dots\otimes a_r)\in \tau_!\tilde{F}(C,D)$ by boundedness and Theorem \ref{conductor_to_sections}.
	The condition for $f\colon C\to \mathbb{P}^1$ means that ${}^tf$ gives an element of $\bMCor^\pro((\Pbb^1_K,\{1\}),(C,D))$ and we have
	$$
		\sum_{x\in (C\setminus |D|)^{(1)}}v_x(f)\cdot \Tr_{k(x)/K}\varphi_{k(x)}(a_1(x),\dots,a_r(x))= (i_0^*-i_\infty^*)({}^tf)^*\varphi(a_1\otimes \dots \otimes a_r)
	$$
	(see the proof of \cite[Theorem 4.7]{RSY}).
	The last term is $0$ by the cube-invariance of $\tilde{F}$.
	
	(iii)$\implies$(i) By Lemma \ref{injectivity_factorization_lemma}, it suffices to show that for each $K\in \Field_k$, $\varphi_K$ factors through $h_0^\Nis(\bigotimes_{i}^\MPST G_i)(K)$.
	Let $H_K$ be the kernel of the canonical surjection $(\bigotimes_{i}^\NST \omega_!^\Nis{G_i})(K) \to h_0^\Nis(\bigotimes_{i}^\MPST G_i)(K)$.
	By \cite[Theorem 4.9]{RSY}, $H_K$ is generated by elements of the form
	$$
		\sum_{x\in (C\setminus |D|)^{(1)}} v_x(f)\cdot \Tr_{k(x)/K}(a_1(x)\otimes \dots\otimes a_r(x))
	$$
	associated to a Weil datum over $K$.
	Hence we get $\varphi_K(H_K)=0$ by the Weil reciprocity.
\end{proof}

\begin{remark}
	The equivalence of (i) and (iii) in Theorem \ref{bounded_Weil} holds for a general $F\in \NST$ having global injectivity.
	Indeed, the proof of (iii)$\implies$(i) applies without any change, and the converse is also clear from the proof.
\end{remark}

\begin{example}\label{extended_differential_bounded}
	We prove that the differential symbol $\varphi_K\colon K\times (K^\times)^s\to \Omega^s_{{-}/\Zbb}$ is bounded (see Example \ref{extended_differential} for the definition).
	By Theorem \ref{bounded_Weil}, it suffices to prove that $\varphi$ satisfies the Weil reciprocity, i.e.
	$$
	\sum_{x\in (C\setminus |D|)^{(1)}} v_x(f)\cdot \Tr_{k(x)/K}(a(x)\dlog(b_1(x))\wedge\dots\wedge\dlog(b_s(x)))=0
	$$
	holds for any Weil datum $(C,(D,E_1,\dots,E_s),(a,b_1,\dots,b_s),f)$ over $K$.
	
	Fix a point $x\in |D|$ and let $R=\mathcal{O}_{C,x}^h$, $S=\Spec R$ and $U=\Spec \Frac R$.
	We prove that $a\dlog(b_1)\wedge\dots\wedge \dlog(b_s)\wedge\dlog(f)$ is regular on $S$.
	Let $z$ denote the closed point of $S$ and $d,e_1,\dots,e_s$ be the multiplicities of $D,E_1,\dots,E_s$ at $z$.
	Regard $a$ (resp. $b_i$) as an element of $\tau_!\mathbb{G}_a^\#(S,d\{z\})$ (resp. $\tau_!\mathbb{G}_m^\#(S,e_i\{z\})$) via Lemma \ref{h0_of_fractions}.
	Replacing $S$ by its finite extension, we may assume that $a$ (resp. $b_i$) is represented by an ambient morphism $(S,d\{z\})\to (\mathbb{P}_K,2\{\infty\})$ (resp. $(S,e_i\{z\})\to (\mathbb{P}_K,\{0,\infty\})$).
	
	We may assume that $a\neq 0$.
	Choose a uniformizer $\pi$ of $R$ and write $a=u_a\pi^{-m_a}$, $b_i=u_{b_i}\pi^{m_{b_i}}$ and $f=1+u_f\pi^{m_f}$ with $u_a,u_{b_i},u_f\in R^\times, m_a,m_{b_i}\in \mathbb{Z}, m_f\in \mathbb{Z}_{\geq 1}$.
	By the above interpretation of $a,b_1,\dots,b_i$ as ambient morphisms, we get
	$$
		d\geq 2\max\{m_a,0\},\quad e_i\geq |m_{b_i}|.
	$$
	Let $P=\Spec K[T]_{(T-1)}^h$ and $p$ be its closed point.
	Our assumption on $f$ implies that ${}^tf$ gives an admissible finite correspondence $(P,\{p\})\to (S,(d+e_1+\dots+e_s)\{z\})$.
	Hence we have
	$$
		m_f\geq d+e_1+\dots+e_s.
	$$
	These inequalities imply $m_f\geq 2\max\{m_a,0\}+\sum_{i=1}^s|m_{b_i}|\geq m_a+1$ except for $m_a=0$.
	However, this inequality is true also for $m_a=0$ since $m_f$ is always positive.
	Now we have
	\begin{align*}
		&a\dlog(b_1)\wedge\dots\wedge \dlog(b_s)\wedge \dlog (f)\\
		=&\dfrac{u_a\pi^{m_f-m_a-1}}{(1+u_f\pi^{m_f})}\cdot\biggl(\dfrac{du_{b_1}}{u_{b_1}}+\dfrac{m_{b_1}d\pi}{\pi}\biggr)\wedge\dots\wedge\biggl(\dfrac{du_{b_s}}{u_{b_s}}+\dfrac{m_{b_s}d\pi}{\pi}\biggr)\wedge(\pi du_f+m_fu_fd\pi)
	\end{align*}
	and this is regular on $S$ since $m_f-m_a-1\geq 0$.
	This proves the claim.
	Using this result, we get
	\begin{align*}
	&\sum_{x\in (C\setminus |D|)^{(1)}} v_x(f)\cdot \Tr_{k(x)/K}(a(x)\dlog(b_1(x))\wedge\dots\wedge\dlog(b_s(x)))\\
	=&\sum_{x\in (C\setminus |D|)^{(1)}} \Res_{C/K,x}(a\dlog(b_1)\wedge\dots\wedge\dlog(b_s)\wedge\dlog(f))\\
	=&\sum_{x\in C^{(1)}} \Res_{C/K,x}(a \dlog(b_1)\wedge\dots\wedge\dlog(b_s)\wedge\dlog(f))=0
	\end{align*}
	by the residue theorem (see \cite[\S17]{Kun86} for the basic properties of the residue map).
\end{example}

\begin{example}\label{infinitesimal_bounded}
	We prove that the infinitesimal symbol $\varphi_K\colon K\times K\to \Pcal^1$ is bounded (see Example \ref{infinitesimal} for the definition).
	Here we identify $\Pcal^1$ with $\Gbb_a\oplus \Omega_{{-}/\Zbb}^1$ by $[a\otimes b]\mapsto (ab,adb)$.
	By Theorem \ref{bounded_Weil}, it suffices to prove that $\varphi$ satisfies the Weil reciprocity, i.e.
	$$
	\sum_{x\in (C\setminus |D|)^{(1)}} v_x(f)\cdot \Tr_{k(x)/K}(a(x)b(x),a(x)db(x))=0
	$$
	holds for any Weil datum $(C,(D,E),(a,b),f)$ over $K$.
	
	Fix a point $x\in |D|$ and let $R=\mathcal{O}_{C,x}^h$, $S=\Spec R$ and $U=\Spec \Frac R$.
	We prove that $(ab\dlog(f),adb\wedge\dlog(f))$ is regular on $S$.
	Let $z$ denote the closed point or $S$ and $d,e$ be the multiplicities of $D,E$ at $z$.
	Regard $a$ (resp. $b$) as an element of $\tau_!\mathbb{G}_a^\#(S,d\{z\})$ (resp. $\tau_!\mathbb{G}_a^\#(S,e\{z\})$) via Lemma \ref{h0_of_fractions}.
	Replacing $S$ by its finite extension, we may assume that $a$ (resp. $b$) is represented by an ambient morphism $(S,d\{z\})\to (\mathbb{P}_K,2\{\infty\})$ (resp. $(S,e\{z\})\to (\mathbb{P}_K,2\{\infty\})$).

	We may assume that $ab\neq 0$.
	Choose a uniformizer $\pi$ of $R$ and write $a=u_a\pi^{-m_a}$, $b=u_b\pi^{-m_b}$ and $f=1+u_f\pi^{m_f}$ with $u_a,u_b,u_f\in R^\times, m_a,m_b\in \mathbb{Z}, m_f\in \mathbb{Z}_{\geq 1}$.
	By the above interpretation of $a,b$ as ambient morphisms, we get
	$$
		d\geq 2\max\{m_a,0\},\quad e\geq 2\max\{m_b,0\}.
	$$
	Let $P=\Spec K[T]_{(T-1)}^h$ and $p$ be its closed point.
	Our assumption on $f$ implies that ${}^tf$ gives an admissible finite correspondence $(P,\{p\})\to (S,(d+e)\{z\})$.
	Hence we have
	$$
		m_f\geq d+e.
	$$
	These inequalities imply $m_f\geq 2\max\{m_a,0\}+2\max\{m_b,0\}\geq m_a+m_b+1$ except for $m_a=m_b=0$.
	However, this inequality is true also for $m_a=m_b=0$ since $m_f$ is always positive.
	Now we have
	\begin{align*}
			&(ab\dlog(f),adb\wedge\dlog(f))\\
			=&\dfrac{u_a u_b\pi^{m_f-m_a-m_b-1}}{1+u_f\pi^{m_f}}(\pi du_f+m_fu_f d\pi, d\pi\wedge du_f)
	\end{align*}
	and this is regular on $S$.
	This proves the claim.
	Using this result, we get
	\begin{align*}
	&\sum_{x\in (C\setminus |D|)^{(1)}} v_x(f)\cdot \Tr_{k(x)/K}(a(x)b(x),a(x)db(x))\\
	=&\sum_{x\in (C\setminus |D|)^{(1)}} (\Res_{C/K,x}ab\dlog(f),\Res_{C/K,x}adb\wedge\dlog(f)))\\
	=&\sum_{x\in C^{(1)}} (\Res_{C/K,x}ab\dlog(f),\Res_{C/K,x}adb\wedge\dlog(f))=0
	\end{align*}
	by the residue theorem.
\end{example}

We regard the canonical/extended differential/infinitesimal symbols as morphisms
\begin{align*}
	&h_0^\Nis((\Gbb_m^+)^{\boxtimes s}) \to K^M_s,\\
	&h_0^\Nis(\Gbb_a^+\boxtimes (\Gbb_m^+)^{\boxtimes s}) \to \Omega^s_{{-}/\Zbb},\\
	&h_0^\Nis(\Gbb_a^+\boxtimes \Gbb_a^+) \to \Pcal^1.
\end{align*}
Now we use the following result due to R\"ulling-Sugiyama-Yamazaki:

\begin{theorem}[{\cite[Theorem 5.6 and 5.14]{RSY}}]\label{RSY}
	The canonical symbol gives an isomorphism
	$$
		h_0^\Nis((\Gbb_m^+)^{\boxtimes s})\xrightarrow{\sim} K^M_s.
	$$
	If $\ch(k)\neq 2$, then the infinitesimal symbol gives an isomorphism
	$$
		h_0^\Nis(\Gbb_a^+\boxtimes \Gbb_a^+)\xrightarrow{\sim} \Pcal^1.
	$$
\end{theorem}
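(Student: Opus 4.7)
The plan is to reduce both isomorphism claims to computing kernels of two explicit surjections via the framework of Theorem~\ref{bounded_Weil}. First I would verify that both symbols actually induce morphisms of the asserted form. The canonical symbol is bounded because $K^M_s$ is $\Abb^1$-invariant (Theorem~\ref{a1_conductor}), so it induces $h_0^\Nis((\Gbb_m^+)^{\boxtimes s})\to K^M_s$ by Theorem~\ref{bounded_Weil}; the infinitesimal symbol is bounded by the residue-theorem computation of Example~\ref{infinitesimal_bounded}, so it induces $h_0^\Nis(\Gbb_a^+\boxtimes\Gbb_a^+)\to \Pcal^1$. Both maps are visibly surjective on field sections, and since source and target are unramified (Corollary~\ref{reciprocity_unramified}), by Lemma~\ref{injectivity_factorization_lemma} it suffices to check that they are isomorphisms on $F(K)$ for every $K\in\Field_k$.

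For each $K$ one has two parallel surjections from $(K^\times)^{\otimes s}$ (resp.\ $K^{\otimes 2}$): one onto $h_0^\Nis((\Gbb_m^+)^{\boxtimes s})(K)$ (resp.\ $h_0^\Nis(\Gbb_a^+\boxtimes\Gbb_a^+)(K)$) and one onto $K_s^M(K)$ (resp.\ $\Pcal^1(K)$). By the argument in the proof of Theorem~\ref{bounded_Weil}, the kernel of the first surjection is the Weil-reciprocity subgroup generated by the elements $\sum_x v_x(f)\cdot \Tr_{k(x)/K}(a_1(x)\otimes\cdots\otimes a_r(x))$ attached to Weil data. The kernel of the second surjection is, by definition, the Steinberg subgroup (resp.\ the (ST3) subgroup). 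So the task reduces to showing these two kernels coincide.

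For the Milnor case, the inclusion ``Steinberg $\subseteq$ Weil reciprocity'' is realized by a single explicit Weil datum on $C=\Pbb^1_K$ with coordinate $t$: take the ambient morphisms $t$ and $1-t$ into $\Gbb_m^+$ on suitable modulus pairs, and choose a rational function $f$ on $C$ congruent to $1$ modulo the total divisor so that the residue/Weil sum collapses to $\{a,1-a\}$ at two specific closed points. The converse inclusion ``Weil $\subseteq$ Steinberg'' is the classical statement that Milnor $K$-theory satisfies Weil reciprocity (Bass--Tate), which proceeds by writing $f$ as a product of linear factors and reducing. For $\Pcal^1$ the strategy is formally parallel, with (ST3) in place of Steinberg; the ``easy'' inclusion again comes from an explicit Weil datum on $\Pbb^1_K$ chosen so that the residue computation at two points yields precisely the four-term (ST3) relation.

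The main obstacle is the ``Weil $\subseteq$ (ST3)'' direction, and it is exactly here that the hypothesis $\ch(k)\neq 2$ enters. One reduces a general Weil sum to linear and quadratic factor contributions; while the linear contributions can be absorbed by bilinearity and transfer, the quadratic contribution involves solving for a generic bilinear element in terms of (ST3) relations, and this requires dividing by $2$ at a key step (essentially because $\Pcal^1$ is built from the \emph{symmetric} tensor square of a differential structure). In characteristic $2$ this step fails and one expects genuinely extra relations to appear, whereas outside characteristic $2$ one can invert $2$ and conclude. Modulo this technical residue calculation, injectivity on each field follows, and the sheafified map is therefore an isomorphism.
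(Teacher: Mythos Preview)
The paper does not give its own proof of this theorem; it is quoted verbatim from \cite{RSY} (Theorems 5.6 and 5.14 there) and used as a black box. So there is no ``paper's proof'' to compare against, and your proposal should be judged on its own merits as a sketch of the RSY argument.

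Your outline is broadly reasonable---reduce to fields via unramifiedness, then compare the kernels of two parallel surjections---but two points deserve correction. First, the common source of the ``two parallel surjections'' is not $(K^\times)^{\otimes s}$ or $K^{\otimes 2}$ but $(\bigotimes_i^{\NST}\omega_!^\Nis G_i)(K)$, which involves transfers from finite extensions; the description of its kernel via Weil data (the input you cite as \cite[Theorem~4.9]{RSY}) already takes this into account, but your identification of the \emph{other} kernel (to $K_s^M(K)$ or $\Pcal^1(K)$) as ``Steinberg'' or ``(ST3)'' is then not obvious and needs the same transfer bookkeeping.

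Second, and more substantively, you have the location of the $\ch(k)\neq 2$ obstruction reversed. You claim it enters in the direction ``Weil $\subseteq$ (ST3)'', i.e.\ in showing that $\Pcal^1$ satisfies Weil reciprocity. But Example~\ref{infinitesimal_bounded} of this very paper carries out that verification by a direct residue-theorem computation, valid in \emph{every} characteristic. So the factor-by-factor reduction you describe, with its division by~$2$, is neither the argument the paper uses nor where the restriction lives. The hypothesis $\ch(k)\neq 2$ in \cite{RSY} enters on the other side: in producing enough explicit cube-homotopies (equivalently, Weil data) to force the (ST3)-type relations to hold in $h_0^\Nis(\Gbb_a^+\boxtimes\Gbb_a^+)(K)$, which is where the genuine work is.
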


\begin{corollary}
	Let $\varphi$ be a bounded $(r,s)$-symbol for $F\in \RSC_\Nis$.
	Then $\varphi$ satisfies (ST1).
	If $\ch(k)\neq 2$, then it also satisfies (ST3).
\end{corollary}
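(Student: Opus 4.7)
The plan is to translate the bounded symbol into a morphism of Nisnevich sheaves out of $h_0^\Nis$ of a box product of modulus fractions, and then reduce each of (ST1) and (ST3) to a known relation in Milnor $K$-theory or in the infinitesimal neighborhood sheaf $\Pcal^1$ using the identifications of Theorem \ref{RSY}.

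First, combining Lemma \ref{geometric_symbol} with Theorem \ref{bounded_Weil}, the extension of $\varphi$ to a morphism $(\omega_!^\Nis\Gbb_a^\#)^{\otimes r}\otimes^\NST(\omega_!^\Nis\Gbb_m^\#)^{\otimes s}\to F$ factors by boundedness through
$$\Phi \colon h_0^\Nis\bigl((\Gbb_a^+)^{\boxtimes r}\boxtimes(\Gbb_m^+)^{\boxtimes s}\bigr) \to F,$$
in such a way that, for $K\in\Field_k$, the element $\varphi_K(a_1,\dots,a_r\mid b_1,\dots,b_s)$ is the image under $\Phi_K$ of the tensor product $[a_1]\otimes\cdots\otimes[a_r]\otimes[b_1]\otimes\cdots\otimes[b_s]$ of Teichmüller classes.

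Next, I would isolate the two slots relevant to each identity. Because $\omega_!^\Nis$ is symmetric monoidal and the canonical maps $h_0^\cube A\otimes^\MPST h_0^\cube B\to h_0^\cube(A\otimes^\MPST B)$ assemble into a lax symmetric monoidal structure on $h_0^\Nis$, there are natural morphisms
$$h_0^\Nis(\Gbb_m^+\boxtimes\Gbb_m^+)\otimes^\NST R_1\;\longrightarrow\;h_0^\Nis\bigl((\Gbb_a^+)^{\boxtimes r}\boxtimes(\Gbb_m^+)^{\boxtimes s}\bigr)$$
for the case of (ST1), and analogously with $h_0^\Nis(\Gbb_a^+\boxtimes\Gbb_a^+)$ in place of $h_0^\Nis(\Gbb_m^+\boxtimes\Gbb_m^+)$ for (ST3), where $R_1, R_2$ collect the remaining factors. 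By Theorem \ref{RSY}, the first isolated factor is isomorphic to $K^M_2$ with $[a]\otimes[b]$ corresponding to the Milnor symbol $\{a,b\}$, and (when $\ch(k)\neq 2$) the second to $\Pcal^1$ with $[a]\otimes[b]$ corresponding to $[a\otimes b]$.

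The relations then follow from pre-existing identities in the target groups. The classical Steinberg relation $\{a,1-a\}=0$ in $K^M_2$ shows that the element used for (ST1) pulls back to zero before applying $\Phi_K$. For (ST3), the four tensors on the two sides of
$$[bc]\otimes[ad]+[ac]\otimes[bd]=[c]\otimes[abd]+[abc]\otimes[d]$$
in $h_0^\Nis(\Gbb_a^+\boxtimes\Gbb_a^+)(K)\simeq\Pcal^1(K)$ correspond to $[bc\otimes ad]$, $[ac\otimes bd]$, $[c\otimes abd]$, $[abc\otimes d]$, and the asserted equality is the very defining relation of $\Pcal^1$; the hypothesis $\ch(k)\neq 2$ is inherited entirely from Theorem \ref{RSY}. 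The main obstacle will be the monoidal bookkeeping in the second step: one must check that the canonical lax-monoidal maps into $h_0^\Nis$ of the full box product carry the two-factor Teichmüller tensor to the Milnor symbol, respectively to the canonical element of $\Pcal^1$, so that Theorem \ref{RSY} can be applied in the isolated factor. Once this is verified, (ST1) and (ST3) drop out immediately.
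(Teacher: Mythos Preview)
Your proposal is correct and follows essentially the same route as the paper. The paper likewise factors $\varphi$ through $h_0^\Nis((\Gbb_a^+)^{\boxtimes r}\boxtimes(\Gbb_m^+)^{\boxtimes s})$ via Theorem \ref{bounded_Weil}, precomposes with the lax-monoidal map, and then invokes Theorem \ref{RSY}; the only cosmetic difference is that the paper groups all additive and all multiplicative factors together (using $h_0^\Nis((\Gbb_m^+)^{\boxtimes s})\simeq K^M_s$ directly for (ST1)) rather than isolating a single pair of slots as you do, but the argument and the verification you flag as the ``main obstacle'' are identical.
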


\begin{proof}
	For a field $K$ and $a_1,\dots,a_r\in K,\,b_1,\dots,b_s\in K^\times$, the value $\varphi_K(a_1,\dots,a_r|b_1,\dots,b_s)$ is equal to the image of $(a_1,\dots,a_r)\otimes (b_1,\dots,b_s)$ under the morphism
	$$
		h_0^\Nis((\mathbb{G}_a^+)^{\boxtimes r})\otimes^\NST h_0^\Nis((\mathbb{G}_m^+)^{\boxtimes s})\to h_0^\Nis((\mathbb{G}_a^+)^{\boxtimes r}\boxtimes (\mathbb{G}_m^+)^{\boxtimes s}) \xrightarrow{\varphi} F
	$$
	evaluated at $K$.
	Now Theorem \ref{RSY} implies that we have
	$$
		(\dots,a,\dots,1-a,\dots)=0\in h_0^\Nis((\mathbb{G}_m^+)^{\boxtimes s})(K),
	$$
	and if $\ch(k)\neq 2$ then
	\begin{align*}
		&(\dots,bc,\dots,ad,\dots)+(\dots,ac,\dots,bd,\dots)\\
		 = &(\dots,c,\dots,abd,\dots)+(\dots,abc,\dots,d,\dots) \in h_0^\Nis((\mathbb{G}_a^+)^{\boxtimes r})(K).
	\end{align*}
	The claim follows from these relations.
\end{proof}

It remains to establish (ST2).
This will occupy the rest of the paper.

\section{Auxiliary symbols}

We associate two auxiliary symbols $\varphi^A,\,\varphi^B$ to each bounded $(r,s)$-symbol $\varphi$.
We show that (ST2) for $\varphi$ can be reduced to a certain condition for $\varphi^B$.

\begin{definition}
	Let $\varphi$ be a bounded $(r,s)$-symbol for $F\in \RSC_\Nis$ and suppose that $r\geq 1$.
	Let $\mu\colon \Gbb_a^+\boxtimes \Gbb_a^+\to \Gbb_a^+$ denote the morphism induced by the multiplication map
	$$
		\mathbb{A}^1\times\mathbb{A}^1\to \mathbb{A}^1;\quad(x,y)\mapsto xy.
	$$
	We define $\varphi^A$ to be the bounded $(r+1,s)$-symbol given by
	$$
		h_0^\Nis((\Gbb_a^+)^{\boxtimes (r+1)}\boxtimes (\Gbb_m^+)^{\boxtimes s})
		\xrightarrow{h_0^\Nis(\mu\boxtimes \id)} h_0^\Nis((\Gbb_a^+)^{\boxtimes r}\boxtimes (\Gbb_m^+)^{\boxtimes s})\xrightarrow{\varphi} F.
	$$
	Concretely, it can be written as
	$$
		\varphi^A_K(a_1,a_2,\dots|\dots)=\varphi_K(a_1a_2,\dots|\dots ).
	$$
\end{definition}

The auxiliary symbol $\varphi^A$ can be used to deduce the following

\begin{proposition}\label{prop:ST2_weak}
	Let $F\in \RSC_\Nis$.
	Suppose that any bounded $(r,s)$-symbol $\varphi$ for $F$ with $r,s\geq 1$ satisfies
		\begin{description}
			\item[{\bf (ST2')}]		$\varphi_K(\dots,a|a,\dots)+\varphi_K(\dots,1-a|1-a, \dots)=0$
		\end{description}
	for $K\in \Field_k,\,a\in K^\times\setminus\{1\}$.
	Then any bounded $(r,s)$-symbol $\varphi$ for $F$ satisfies (ST2).
\end{proposition}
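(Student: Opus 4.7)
Observe that (ST2) is vacuous unless both $r \geq 1$ and $s \geq 1$, since it involves both a $\Gbb_a^+$ slot (for $ca$) and a $\Gbb_m^+$ slot (for $a$). Assume this. The plan is to apply the hypothesis directly to $\varphi^A$: it is a bounded $(r+1, s)$-symbol with $r+1 \geq 2$ and $s \geq 1$, so by assumption it satisfies (ST2').

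Concretely, I would apply (ST2') to $\varphi^A$ with the Steinberg element $a$ placed in the second $\Gbb_a^+$ slot of $\varphi^A$ (the slot that fuses with the first under $\mu$ in forming $\varphi^A$) and in any chosen $\Gbb_m^+$ slot, leaving the remaining arguments as parameters. Writing $c := a_1$ for the first $\Gbb_a^+$ entry and unwinding the defining identity $\varphi^A_K(x_1, x_2, x_3, \dots \mid \cdots) = \varphi_K(x_1 x_2, x_3, \dots \mid \cdots)$, the resulting identity (ST2') for $\varphi^A$ reads
\begin{align*}
\varphi_K(ca, a_3, \dots \mid \dots, a, \dots) + \varphi_K(c(1-a), a_3, \dots \mid \dots, 1-a, \dots) = 0,
\end{align*}
which is precisely (ST2) for $\varphi$ with $ca$ sitting in the first $\Gbb_a^+$ slot.

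To obtain (ST2) with $ca$ in any other $\Gbb_a^+$ slot, I would precompose $\varphi$ with the symmetry automorphism of $(\Gbb_a^+)^{\boxtimes r}$ coming from the symmetric monoidal structure on $\bMPST$ that swaps the first slot with the desired target slot. This produces another $(r,s)$-symbol which is still bounded — boundedness is, by Theorem \ref{bounded_Weil}, equivalent to factorization through $h_0^\Nis(\bigotimes_i^\MPST G_i)$, and the permutation isomorphism descends to this quotient — so the above argument applies verbatim and yields (ST2) in the requested slot arrangement.

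The proof is thus reduced to a symbolic unwinding once the auxiliary symbol $\varphi^A$ is in hand; I do not expect any substantive obstacle at this stage. The entire content of the reduction is the observation that the multiplication $\mu\colon \Gbb_a^+ \boxtimes \Gbb_a^+ \to \Gbb_a^+$ is a well-defined morphism in $\bMPST$ — which is built into the definition of $\varphi^A$ — together with the fact that (ST2) has the shape of (ST2') after factoring one $\Gbb_a^+$-argument as a product. The genuinely difficult step of the paper will be establishing (ST2') for bounded symbols, which is where the blow-up techniques and logarithmic blow-up invariance of Theorem \ref{log_blowup} will enter.
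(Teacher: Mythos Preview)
Your proof is correct and follows the same approach as the paper's: apply the hypothesis (ST2') to the auxiliary symbol $\varphi^A$, then unwind the definition $\varphi^A_K(c,a,\dots|\dots)=\varphi_K(ca,\dots|\dots)$ to recover (ST2). The paper's proof is a two-line version of exactly this argument (``Since $\varphi^A$ (with variables permuted) satisfies the above relation\ldots''); you have simply made explicit the permutation step and its justification via the symmetric monoidal structure, which the paper leaves implicit.
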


\begin{proof}
	Since $\varphi^A$ (with variables permuted) satisfies the above relation, we have
	$$
		\varphi^A_K(c,a,\dots|a,\dots)+\varphi^A_K(c,1-a,\dots|1-a,\dots) = 0
	$$
	and this is nothing but (ST2) with variables permuted.
\end{proof}

\begin{definition}
	Let $\varphi$ be a bounded $(r,s)$-symbol for $F\in \RSC_\Nis$ and suppose that $r,s\geq 1$.
	Let $\delta\colon \mathbb{D}_2^+\to \Gbb_a^+\boxtimes \Gbb_m^+$ denote the morphism induced by the map
	$$
		\mathbb{A}^1\setminus\{0\}\to \mathbb{A}^1\times (\mathbb{A}^1\setminus \{0\});\quad x\mapsto (x,x).
	$$
	We define $\varphi^B$ to be the $(\underbrace{\Gbb_a^\#,\dots,\Gbb_a^\#}_{r-1},\mathbb{D}_2^\#,\underbrace{\Gbb_m^\#,\dots,\Gbb_m^\#}_{s-1})$-symbol given by
	$$
		h_0^\Nis((\Gbb_a^+)^{\boxtimes (r-1)}\boxtimes \mathbb{D}_2^+\boxtimes (\Gbb_m^+)^{\boxtimes (s-1)})
		\xrightarrow{h_0^\Nis(\id\boxtimes\delta\boxtimes \id)} h_0^\Nis((\Gbb_a^+)^{\boxtimes r}\boxtimes (\Gbb_m^+)^{\boxtimes s})\xrightarrow{\varphi} F.
	$$
	By definition, it satisfies
	$$
		\varphi^B_K(a_1,\dots,a_{r-1},[\![b]\!],c_1,\dots,c_{s-1}) = \varphi_K(a_1,\dots,a_{r-1},b|b,c_1,\dots,c_{s-1}).
	$$
\end{definition}

Now observe that there is a sequence of morphisms $\mathbb{D}_2^+\to \mathbb{W}_2^+ \to \mathbb{G}_a^+$ induced by the canonical inclusion $\mathbb{A}^1\setminus \{0\}\to \mathbb{A}^1$ and the identity map $\mathbb{A}^1\to \mathbb{A}^1$.
Taking $h_0^\Nis$ of this sequence, we recover the sequence of canonical epimorphisms
$$
	\mathbb{D}_2\xrightarrow{\pr_2} \mathbb{W}_2\twoheadrightarrow \mathbb{G}_a;\quad [\![a]\!]\mapsto [a]\mapsto a.
$$

\begin{lemma}\label{ch2_cor_mixed}
	Let $\varphi$ be a bounded $(r,s)$-symbol for $F\in \RSC_\Nis$ and suppose that $r,s\geq 1$.
	If $\varphi^B_K$ descends through the canonical surjection $\mathbb{D}_2(K)\xrightarrow{\pr_2} \mathbb{W}_2(K)\twoheadrightarrow K$ to a map from $K^{r-1}\times K\times (K^\times)^{s-1}$, then $\varphi_K$ satisfies (ST2').
\end{lemma}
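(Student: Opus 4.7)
The plan is to unwind the definition of $\varphi^B$, invoke the descent hypothesis together with additivity in the middle slot, and reduce to evaluating the original symbol with a $1$ in one of its $K^\times$-arguments.

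First I would restate (ST2') in terms of $\varphi^B$. By construction of $\varphi^B$ from $\varphi$ and the morphism $\delta \colon \Dbb_2^+ \to \Gbb_a^+ \boxtimes \Gbb_m^+$, one has
\[
\varphi_K(\ldots, b \mid b, \ldots) = \varphi^B_K(\ldots, [\![b]\!], \ldots),
\]
so it suffices to prove
\[
\varphi^B_K(\ldots, [\![a]\!], \ldots) + \varphi^B_K(\ldots, [\![1-a]\!], \ldots) = 0.
\]

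Next I would exploit the descent hypothesis. In view of the commutative diagram displayed after Proposition \ref{h0_of_fractions}, the factorization must be along the canonical epimorphism $\pi \colon \Dbb_2(K) \twoheadrightarrow \Wbb_2(K) \twoheadrightarrow \Gbb_a(K) = K$ which sends $[\![b]\!] \mapsto b$. Writing $\bar{\varphi}_K \colon K^{r-1} \times K \times (K^\times)^{s-1} \to F(K)$ for the resulting multilinear map, and using that $\pi$ is a group homomorphism together with additivity of $\bar{\varphi}_K$ in its middle $K$-slot, I obtain
\[
\varphi^B_K(\ldots, [\![a]\!], \ldots) + \varphi^B_K(\ldots, [\![1-a]\!], \ldots) = \bar{\varphi}_K(\ldots, a, \ldots) + \bar{\varphi}_K(\ldots, 1-a, \ldots) = \bar{\varphi}_K(\ldots, 1, \ldots).
\]

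Finally, since $[\![1]\!]$ is a $\pi$-lift of $1 \in K$, the right-hand side equals $\varphi^B_K(\ldots, [\![1]\!], \ldots) = \varphi_K(\ldots, 1 \mid 1, \ldots)$, and this vanishes because $\varphi_K$ is multilinear in the multiplicative group $(K^\times)^s$, so it kills the identity element $1 \in K^\times$ placed in any $K^\times$-slot. I do not foresee a serious obstacle; the only point that requires care is keeping the two different group structures straight, namely that the additive structure on $\Gbb_a$ governs the middle slot of $\bar{\varphi}_K$ while the multiplicative structure on $\Gbb_m$ is what makes the final expression vanish.
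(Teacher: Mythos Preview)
Your proof is correct and is essentially the same as the paper's, just with more explicit notation: the paper abusively writes $\varphi^B_K(\dots,a,\dots)$ for what you call $\bar{\varphi}_K(\dots,a,\dots)$, then uses $a+(1-a)=1$ in $K$ and the vanishing of $\varphi_K(\dots,1\mid 1,\dots)$ exactly as you do.
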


\begin{proof}
	We regard $\varphi_K^B$ as a map $K^{r-1}\times K\times (K^\times)^{s-1} \to F(K)$.
	For any $a\in K^\times\setminus\{1\}$ we have
	\begin{align*}
			&	\varphi_K(\dots,a|a,\dots)+\varphi_K(\dots,1-a|1-a,\dots)\\
		=	&	\varphi^B_K(\dots,a,\dots)+\varphi^B_K(\dots,1-a,\dots)\\
		=	&	\varphi^B_K(\dots,1,\dots) = \varphi_K(\dots,1|1,\dots)=0.
	\end{align*}
	Here, for the second equality, we used the fact that $a+(1-a)=1$ holds in $K$.
	Hence $\varphi_K$ satisfies (ST2').
\end{proof}

\begin{lemma}\label{ch2_lemma_mixed}
	Assume $\ch (k)\neq 2$.
	Let $\varphi$ be a bounded $(r,s)$-symbol for $F\in \RSC_\Nis$ and suppose that $r,s\geq 1$.
	Suppose that $\varphi^B_K$ descends through the canonical surjection $\mathbb{D}_2(K)\xrightarrow{\pr_2} \mathbb{W}_2(K)$ to a map from $K^{r-1}\times \mathbb{W}_2(K)\times (K^\times)^{s-1}$ for all $K\in \Field_k$.
	Then it further descends through the canonical surjection $\mathbb{D}_2(K)\xrightarrow{\pr_2} \mathbb{W}_2(K)\twoheadrightarrow K$ to a map from $K^{r-1}\times K\times (K^\times)^{s-1}$ for all $K\in \Field_k$.
\end{lemma}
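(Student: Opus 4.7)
The plan is to work at the field level: let $\tilde\psi_K\colon K^{r-1}\times \Wbb_2(K)\times (K^\times)^{s-1}\to F(K)$ denote the hypothesized factorization of $\varphi^B_K$, so that $\tilde\psi_K(\ldots,[b],\ldots)=\varphi_K(\ldots,b|b,\ldots)$. Since $\varphi^B_K$ is additive in $\Dbb_2(K)$ (it comes from a morphism in $\NST$) and the projection $\Dbb_2\twoheadrightarrow \Wbb_2$ is surjective, $\tilde\psi_K$ is automatically additive in the $\Wbb_2(K)$-slot. The kernel of the projection $\Wbb_2(K)\twoheadrightarrow K$, $(a_1,a_2)\mapsto a_1$, is the image of the Verschiebung $V_2\colon K\to \Wbb_2(K)$, $c\mapsto (0,c)$. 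Fixing all other slots and setting $\chi(c):=\tilde\psi_K(\ldots,(0,c),\ldots)$, the conclusion amounts to $\chi\equiv 0$, and $\chi$ is additive in $c$ (because $V_2$ is a group homomorphism).

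First I would compute $\chi(a^2)$ using the identity $[a]+[-a]=(0,a^2)$ in $\Wbb_2(K)$, which is immediate from the standard formula $[a]+[b]=(a+b,-ab)$. By the defining property of $\tilde\psi_K$ on Teichm\"uller representatives,
\[
\chi(a^2)=\varphi_K(\ldots,a|a,\ldots)+\varphi_K(\ldots,-a|-a,\ldots).
\]
Multilinearity of $\varphi_K$ (additive in the $\Gbb_a$-slot, multiplicative in the $\Gbb_m$-slot) yields $\varphi_K(\ldots,-a|-a,\ldots)=-\varphi_K(\ldots,a|-a,\ldots)=-\varphi_K(\ldots,a|-1,\ldots)-\varphi_K(\ldots,a|a,\ldots)$, so the two terms above collapse to
\[
\chi(a^2)=-\varphi_K(\ldots,a|-1,\ldots).
\]

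Next I would exploit a polarization argument. Expanding $\chi((a+b)^2)=\chi(a^2+2ab+b^2)$ via additivity of $\chi$ gives $\chi(a^2)+\chi(2ab)+\chi(b^2)$, while the above formula together with additivity of $a\mapsto\varphi_K(\ldots,a|-1,\ldots)$ gives $-\varphi_K(\ldots,a+b|-1,\ldots)=\chi(a^2)+\chi(b^2)$. Subtracting yields $\chi(2ab)=0$ for all $a,b\in K$. Here the hypothesis $\ch(k)\neq 2$ enters: since $2\in K^\times$, the set $\{2ab:a,b\in K\}$ is all of $K$ (take $a=1$), so $\chi\equiv 0$ as required. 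The main thing to verify is the algebraic identity $[a]+[-a]=(0,a^2)$ and the automatic additivity of $\tilde\psi_K$ in the Witt slot; beyond that the argument is essentially the polarization identity $(a+b)^2-a^2-b^2=2ab$, for which the characteristic assumption is precisely what is needed.
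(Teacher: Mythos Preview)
Your argument is correct and genuinely different from the paper's. Both proofs rest on the Witt identity $[a]+[-a]=(0,a^2)$, which yields $\chi(a^2)=-\varphi_K(\ldots,a\,|\,-1,\ldots)$. From here the paper adjoins a square root: for general $c\in K$ it passes to $L=K(\sqrt{c})$, shows $\varphi^B_L(\ldots,(0,c),\ldots)=0$ there (using the same calculation plus the trick $\varphi_L(\ldots,-\alpha\,|\,-1,\ldots)=2\varphi_L(\ldots,-\alpha/2\,|\,-1,\ldots)=\varphi_L(\ldots,-\alpha/2\,|\,1,\ldots)=0$), applies $\Tr_{L/K}$ to get $[L:K]\cdot\varphi^B_K(\ldots,(0,c),\ldots)=0$, and then kills the resulting $2$-torsion via $(0,c)=2(0,c/2)$. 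Your polarization argument stays entirely over $K$: the additivity of $a\mapsto\chi(a^2)=-\varphi_K(\ldots,a\,|\,-1,\ldots)$ combined with $(a+b)^2=a^2+2ab+b^2$ forces $\chi(2ab)=0$, and invertibility of $2$ finishes. Your route is more elementary in that it never invokes the transfer axiom (Sym2) or any field extension; the paper's route, on the other hand, makes visible how the symbol formalism (traces) interacts with the Witt-vector structure.
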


\begin{proof}
	We regard $\varphi_K^B$ as a map $K^{r-1}\times \mathbb{W}_2(K)\times (K^\times)^{s-1} \to F(K)$.
	We have to prove $\varphi^B_K(\dots,(0,a),\dots)=0$ for all $a\in K$.
	Since $2(0,a/2)=(0,a)$ holds in $\Wbb_2(K)$, it suffices to show that $2\varphi^B_K(\dots,(0,a),\dots)=0$.
	Set $L=K(\alpha)$ where $\alpha$ is a square root of $a$.
	Since $(0,a)=[\alpha]+[-\alpha]$ holds in $\Wbb_2(L)$, we have
	\begin{align*}
		\varphi^B_L(\dots,(0,a),\dots)&=\varphi^B_L(\dots,[\alpha],\dots)+\varphi^B_L(\dots,[-\alpha],\dots)\\
		&=\varphi_L(\dots,\alpha|\alpha,\dots) + \varphi_L(\dots,{-\alpha}|{-\alpha},\dots)\\
		&=\varphi_L(\dots,\alpha|\alpha,\dots) + \varphi_L(\dots,{-\alpha}|\alpha,\dots) + \varphi_L(\dots,{-\alpha}|{-1},\dots)\\
		&=\varphi_L(\dots,{-\alpha}|{-1},\dots)=\varphi_L(\dots,{-\alpha}/2,|1,\dots)=0.
	\end{align*}
	Applying $\Tr_{L/K}$ we get $2\varphi^B_K(\dots,(0,a),\dots)=0$.
\end{proof}

Therefore, to complete the proof of Theorem \ref{main}, it suffices to show that the morphism
	$$
		h_0^\Nis((\Gbb_a^+)^{\boxtimes (r-1)}\boxtimes \mathbb{D}_2^+\boxtimes (\Gbb_m^+)^{\boxtimes (s-1)})
		\xrightarrow{h_0^\Nis(\id\boxtimes\delta\boxtimes \id)} h_0^\Nis((\Gbb_a^+)^{\boxtimes r}\boxtimes (\Gbb_m^+)^{\boxtimes s})
	$$
factors through $h_0^\Nis((\Gbb_a^+)^{\boxtimes (r-1)}\boxtimes \mathbb{W}_2^+\boxtimes (\Gbb_m^+)^{\boxtimes (s-1)})$.
We will prove the following generalization of this:

\begin{theorem}\label{main3}
	For any proper modulus fraction $\Xcal/\Ycal$ and $n\geq 1$, the morphism
	$$
		h_0^\Nis(\mathbb{D}_{n+1}^+\boxtimes \Xcal/\Ycal)
		\xrightarrow{h_0^\Nis(\delta\boxtimes \id)} h_0^\Nis(\Wbb_n^+\boxtimes \Gbb_m^+\boxtimes \Xcal/\Ycal)
	$$
	factors through $h_0^\Nis(\mathbb{W}_{n+1}^+\boxtimes \Xcal/\Ycal)$.
\end{theorem}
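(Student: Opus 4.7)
The plan is to dualize the factorization statement using the adjunction between $h_0^\Nis$ and the fully faithful embedding $\RSC_\Nis\to\CIrec$, $F\mapsto\tilde F$, and then to produce the required lift geometrically by blowing up the origin $(0,0)\in\Pbb^1\times\Pbb^1$ and invoking Saito's logarithmic blow-up invariance (Theorem \ref{log_blowup}). By the identification $\tilde F(\Xcal/\Ycal)\simeq\Hom_\NST(h_0^\Nis(\Xcal/\Ycal),F)$ for $F\in\RSC_\Nis$ and Yoneda applied to $\RSC_\Nis\subset\NST$, the desired factorization reduces to showing that for every $G\in\CIrec$ and every $\phi\in G(\Wbb_n^+\boxtimes\Gbb_m^+\boxtimes\Xcal/\Ycal)$, the pullback $\delta^*\phi\in G(\Dbb_{n+1}^+\boxtimes\Xcal/\Ycal)$ lies in the image of the natural restriction from $G(\Wbb_{n+1}^+\boxtimes\Xcal/\Ycal)$ along the canonical morphism $\Dbb_{n+1}^+\to\Wbb_{n+1}^+$.

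The geometric input is the blow-up $\rho\colon Z\to\Pbb^1\times\Pbb^1$ at the origin, with exceptional divisor $E\cong\Pbb^1$. A local-coordinate computation in the two standard charts shows that the strict transforms $\widetilde{\{0\}\times\Pbb^1}$ and $\tilde\Delta$ of the ``denominator'' divisor and of the diagonal meet $E$ at distinct points and do not meet each other. I would interpret $Z$ as a modulus fraction $\widetilde{\Wbb_n^+\boxtimes\Gbb_m^+}$ whose ambient modulus is $\rho^*D_P-E$ (where $D_P=(n+1)(\{\infty\}\times\Pbb^1)+\Pbb^1\times(\{0\}+\{\infty\})$) and whose denominator is $(\widetilde{\{0\}\times\Pbb^1}+E)\sqcup\widetilde{\Wbb_n^+\boxtimes\{1\}}$; morally, $\widetilde{\{0\}\times\Pbb^1}+E$ is the total transform of $\{0\}\times\Pbb^1$ and plays the role of $\{0\}\boxtimes\Gbb_m^+$ on $Z$. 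Saito's logarithmic blow-up invariance then yields a natural isomorphism $G(\Wbb_n^+\boxtimes\Gbb_m^+\boxtimes\Xcal/\Ycal)\xrightarrow{\sim}G(\widetilde{\Wbb_n^+\boxtimes\Gbb_m^+}\boxtimes\Xcal/\Ycal)$ for every $G\in\CIrec$, compatible with pullback along $\delta$.

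The decisive observation is that the embedding $\tilde\Delta\hookrightarrow Z$ realizes a morphism of modulus fractions $\Wbb_{n+1}^+\to\widetilde{\Wbb_n^+\boxtimes\Gbb_m^+}$. Indeed, the pullback of $\rho^*D_P-E$ to $\tilde\Delta\cong\Pbb^1$ equals $(n+2)\{\infty\}$: $\tilde\Delta$ is disjoint from $\widetilde{\Pbb^1\times\{0\}}$ (hence no $\{0\}$-contribution), and it meets $\{\infty\}\times\Pbb^1$ and $\Pbb^1\times\{\infty\}$ at the single point $\infty\in\tilde\Delta$, with multiplicities $n+1$ and $1$ respectively. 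Meanwhile, the pullback of the denominator is the single point $\tilde\Delta\cap E$, which matches the denominator point $\{0\}$ of $\Wbb_{n+1}^+$. Composing this embedding with $\rho$ and precomposing with $\Dbb_{n+1}^+\to\Wbb_{n+1}^+$ recovers the diagonal map $\delta$, so the pullback of $\phi$ along $\Wbb_{n+1}^+\to\widetilde{\Wbb_n^+\boxtimes\Gbb_m^+}$ (and then through the blow-up isomorphism) is a lift of $\delta^*\phi$, as required.

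The main obstacle will be verifying the hypotheses of Theorem \ref{log_blowup}, which are stated for log-smooth modulus pairs with \emph{reduced} divisor, while $D_P$ has multiplicity $n+1$ along $\{\infty\}\times\Pbb^1$. I would handle this by a preliminary reduction, iteratively trading multiplicity along $\{\infty\}$ against cube-factors so as to reduce to the reduced case where Theorem \ref{log_blowup} applies directly, and then reassembling via cube-invariance (Theorem \ref{cube_invariance}). A secondary subtlety is propagating the argument over the general base $\Xcal/\Ycal$, which should follow by the naturality of the blow-up in the $\boxtimes$-direction and the functoriality of Saito's isomorphism; the base enters only as a passive spectator in the first two $\Pbb^1$-factors where the blow-up takes place.
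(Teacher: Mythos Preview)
Your high-level strategy coincides with the paper's: dualize via Lemma~\ref{CIrec_factorization_lemma}, blow up $\Pbb^1\times\Pbb^1$ at $(0,0)$, and observe that the strict transform of the diagonal yields a morphism from $\Wbb_{n+1}^+$ into a suitable modulus fraction on the blow-up (because the strict transform now misses both $D_0$ and $D'_0$, so the only boundary contribution is $(n+2)\{\infty\}$). The passage to a general base $\Xcal/\Ycal$ via $G\rightsquigarrow G^{\Xcal/\Ycal}$ is also exactly how the paper proceeds.

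The genuine gap is precisely the step you flag as ``the main obstacle'', and your proposed workaround does not close it. Theorem~\ref{log_blowup} requires the divisor to be \emph{globally} reduced, and there is no mechanism for ``trading multiplicity along $\{\infty\}$ against cube-factors'': cube-invariance relates $\Xcal$ to $\Xcal\boxtimes\cube$, but does not let you lower the multiplicity of a fixed component of $D_X$. In particular, $(\Pbb^1,(n+1)\{\infty\})$ is not $\cube$-equivalent to anything with reduced boundary, so an inductive reduction of the kind you sketch cannot get off the ground. Without this, you do not have the isomorphism $G(\Wbb_n^+\boxtimes\Gbb_m^+)\xrightarrow{\sim}G(\widetilde{\Wbb_n^+\boxtimes\Gbb_m^+})$ you need, and the argument stalls.

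The paper circumvents the reducedness hypothesis by an entirely different route. It does \emph{not} attempt a global blow-up comparison with $\Wbb_n^+\boxtimes\Gbb_m^+$; instead it compares two modulus fractions $\Bcal/\Dcal$ and $\Bcal'/(\Dcal'\sqcup\Ecal)$ both living on the blow-up $B$, differing only in whether the exceptional divisor $E_{00}$ sits in the modulus or in the denominator. This comparison is reduced, via Mayer--Vietoris (Lemma~\ref{excision_lemma}, Proposition~\ref{key_lemma_general}), to an affine-local statement near $(0,0)$ where the $(n{+}1)$-fold component at infinity has disappeared and all divisors are reduced. That local statement (Lemma~\ref{key_lemma}) is then proved by a further compactification trick: one passes to $B'=\Bl_{\{(0,0),(\infty,\infty)\}}(\Pbb^1\times\Pbb^1)$ and blows it down in two different ways --- once to $\Pbb^1\times\Pbb^1$, once to $\Bl_{(0:0:1)}\Pbb^2$ --- applying Theorem~\ref{log_blowup} each time with a genuinely reduced divisor, together with the contractibility result Lemma~\ref{toric_contractibility}. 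None of this machinery is anticipated in your proposal; it is the real content of the proof.
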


\section{Completion of the proof}

In this section we prove Theorem \ref{main3}.

\begin{lemma}\label{CIrec_factorization_lemma}
	Let $g\colon \Xcal/\Ycal\to \Xcal'/\Ycal'$ be a morphism of proper modulus fractions.
	Suppose that for any $G\in \CI^{\tau,sp}_\Nis$, the homomorphism $G(\Xcal'/\Ycal')\to G(\Xcal/\Ycal)$ induced by $g$ is surjective.
	Then the morphism
	$h_0^\Nis\biggl(\dfrac{\mathcal{X}}{\mathcal{Y}}\boxtimes \dfrac{\mathcal{Z}}{\mathcal{W}}\biggr) \to 
		h_0^\Nis\biggl(\dfrac{\mathcal{X}'}{\mathcal{Y}'}\boxtimes \dfrac{\mathcal{Z}}{\mathcal{W}}\biggr)$
	induced by $g$ is a split monomorphism for any proper modulus fraction $\mathcal{Z}/\mathcal{W}$.
\end{lemma}

\begin{proof}
	Set $F:=h_0^\Nis\biggl(\dfrac{\mathcal{X}}{\mathcal{Y}}\boxtimes \dfrac{\mathcal{Z}}{\mathcal{W}}\biggr)$; this is a reciprocity sheaf and we have a unit morphism $h_0^\cube\biggl(\dfrac{\mathcal{X}}{\mathcal{Y}}\boxtimes \dfrac{\mathcal{Z}}{\mathcal{W}}\biggr)\to \tilde{F}$.
	Since $(\tau_!^\Nis\tilde{F})^{\mathcal{Z}/\mathcal{W}}$ is an object of $\CIrec$ by Lemma \ref{tilde_in_CIrec}, our assumption implies that the induced morphism
	$\tau_!^\Nis h_0^\cube\biggl(\dfrac{\mathcal{X}}{\mathcal{Y}}\boxtimes \dfrac{\mathcal{Z}}{\mathcal{W}}\biggr)\to \tau_!^\Nis \tilde{F}$
	factors through $\tau_!^\Nis h_0^\cube\biggl(\dfrac{\mathcal{X'}}{\mathcal{Y'}}\boxtimes \dfrac{\mathcal{Z}}{\mathcal{W}}\biggr)$.
	Applying $\underline{\omega}_!$ and using $\underline{\omega}_!\tau_!^\Nis\simeq \omega_!^\Nis$, we get the desired retraction.
\end{proof}

\begin{lemma}\label{toric_contractibility}
	Consider the surface $\Bl_{(0:0:1)}\Pbb^2$.
	Let $L_0$ be the strict transform of $[\ast:0:\ast]\simeq \Pbb^1$ and $L_\infty = [\ast:\ast:0]\simeq \Pbb^1$.
	Then the canonical homomorphism
	$$
		H^i_\Nis(\Spec k, G)\to H^i_\Nis((\Bl_{(0:0:1)}\Pbb^2,L_0+L_\infty),G)
	$$
	is an isomorphism for any $G\in \CIrec$ and $i\geq 0$.
\end{lemma}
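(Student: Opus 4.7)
The plan is to realize $\Bl_{(0:0:1)}\Pbb^2$ as a $\Pbb^1$-bundle over $\Pbb^1$ that resolves the rational map $[X:Y:Z]\dashrightarrow [X:Y]$, and to exhibit our modulus pair as a Zariski-locally trivial $\cube$-bundle over $(\Pbb^1,[1:0])$ under this structure. Writing $p\colon\Bl_{(0:0:1)}\Pbb^2\to\Pbb^1$ for this morphism, $L_0$ is exactly the fiber $p^{-1}([1:0])$, while $L_\infty=\{Z=0\}$ does not meet the base point $(0:0:1)$ and hence maps isomorphically onto $\Pbb^1$, giving a section of $p$. In particular $p^*[1:0]=L_0\leq L_0+L_\infty$, so $p$ underlies an ambient morphism $\pi\colon (\Bl_{(0:0:1)}\Pbb^2,L_0+L_\infty)\to(\Pbb^1,[1:0])$, and both modulus pairs are log-smooth since $L_0$ and $L_\infty$ meet transversally at their unique intersection point $[1:0:0]$.

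Next, I trivialize $\pi$ over the two standard affine charts of $\Pbb^1=\Proj k[U,V]$. Let $V_0=\{V\neq 0\}$ and $V_1=\{U\neq 0\}$, with coordinates $u=U/V$ and $v=V/U$ respectively. Using the equation $XV=YU$ that cuts out $\Bl_{(0:0:1)}\Pbb^2\subset\Pbb^2\times\Pbb^1$, the formulas
\[
	\Phi_0(u,s)=\bigl([su:s:1],\,[u:1]\bigr),\qquad\Phi_1(v,s)=\bigl([s:sv:1],\,[1:v]\bigr)
\]
define isomorphisms $V_i\times\Pbb^1\xrightarrow{\sim}p^{-1}(V_i)$ over $V_i$ (with $s$ an affine coordinate on $\Pbb^1$) that send $s=\infty$ to $L_\infty$, since e.g.\ $[su:s:1]\to [u:1:0]$ as $s\to\infty$. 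Over $V_0$ we have $L_0\cap p^{-1}(V_0)=\emptyset$ and $[1:0]\notin V_0$, giving an ambient isomorphism $(V_0,\emptyset)\boxtimes\cube\xrightarrow{\sim}(p^{-1}(V_0),L_\infty|_{p^{-1}(V_0)})$; over $V_1$, $L_0\cap p^{-1}(V_1)=\Phi_1(\{v=0\}\times\Pbb^1)$ and $[1:0]|_{V_1}=\{v=0\}$, giving an ambient isomorphism $(V_1,\{0\})\boxtimes\cube\xrightarrow{\sim}(p^{-1}(V_1),(L_0+L_\infty)|_{p^{-1}(V_1)})$.

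Lemma~\ref{cube_bundle} now yields $\pi^*\colon H^i_\Nis((\Pbb^1,[1:0]),G)\xrightarrow{\sim} H^i_\Nis((\Bl_{(0:0:1)}\Pbb^2,L_0+L_\infty),G)$ for every $i\geq 0$, and since $(\Pbb^1,[1:0])$ is ambient-isomorphic to $\cube$, Theorem~\ref{cube_invariance} applied to $\Spec k\boxtimes\cube$ supplies $H^i_\Nis(\Spec k,G)\xrightarrow{\sim} H^i_\Nis((\Pbb^1,[1:0]),G)$. Composing gives the desired isomorphism. The only real content is producing the trivializations $\Phi_0,\Phi_1$ that respect the section $L_\infty$; once these are in hand, the rest is a direct invocation of Lemma~\ref{cube_bundle} and cube-invariance.
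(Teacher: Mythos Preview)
Your proof is correct and follows essentially the same approach as the paper: both realize $\Bl_{(0:0:1)}\Pbb^2$ as a $\Pbb^1$-bundle over $\Pbb^1$ via the second projection, identify $L_0$ as a fiber and $L_\infty$ as a section, and then invoke Lemma~\ref{cube_bundle} together with Theorem~\ref{cube_invariance}. The only difference is that you spell out the explicit trivializations $\Phi_0,\Phi_1$, whereas the paper leaves this verification to the reader.
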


\begin{proof}
	Identify $\Bl_{(0:0:1)}\Pbb^2$ with the closed subscheme $\{((x:y:z),(s:t))\in \Pbb^2\times \Pbb^1\mid xt=ys\}$ of $\Pbb^2\times \Pbb^1$.
	Then the composition $\pi\colon \Bl_{(0:0:1)}\Pbb^2\to \Pbb^2\times \Pbb^1\xrightarrow{\pr_2} \Pbb^1$ exhibits $\Bl_{(0:0:1)}\Pbb^2$ as a $\Pbb^1$-bundle over $\Pbb^1$.
	Moreover, $L_0$ corresponds to $\pi^{-1}(1:0)$ and $L_\infty$ corresponds to a section of this $\Pbb^1$-bundle.
	We see that the ambient morphism $\pi\colon (\Bl_{(0:0:1)}\Pbb^2, L_0+L_\infty)\to (\Pbb^1,\{(1:0)\})$ is a cube-bundle, and hence the claim follows from Lemma \ref{cube_invariance}.
\end{proof}

\begin{definition}
	Let $(\Xcal,\Ycal,f)$ be a modulus fraction and $G\in \CIrec$.
	We define $G_{\Xcal/\Ycal}$ to be the fiber of the canonical morphism $G_{\Xcal}\to \mathrm{R}f_*G_{\Ycal}$ in $D(\Sh_\Nis(X))$ and $H^i_\Nis\left({\Xcal}/{\Ycal},G\right) := R^i\Gamma(X,G_{\Xcal/\Ycal})$.
\end{definition}

	By definition, we have a long exact sequence
	$$
	\cdots\to H^i_\Nis(\Xcal/\Ycal,G)\to H^i_\Nis(\Xcal,G)\to H^i_\Nis(\Ycal,G)\to \cdots.
	$$
	In particular we have $H^0_\Nis(\Xcal/\Ycal,G) = G(\Xcal/\Ycal)$.
	We also have a Mayer-Vietoris sequence:

\begin{lemma}\label{excision_lemma}
	Let $f\colon \Xcal/\Ycal\to \Xcal'/\Ycal'$ be a morphism of modulus fractions and $G\in \CIrec$.
	Suppose that there is an elementary Nisnevich square
	$$
		\xymatrix{
			W\ar[r]\ar[d]	&V\ar[d]^-{\text{\'etale}}\\
			U\ar[r]^-{\text{open}}			&X.
		}
	$$
	Then the sequence
	$$
	\cdots\to H^i_\Nis(\Xcal/\Ycal,G)\to H^i_\Nis(\Xcal_U/\Ycal_U,G)\oplus H^i_\Nis(\Xcal_V/\Ycal_V,G) \xrightarrow{-} H^i_\Nis(\Xcal_W/\Ycal_W,G)\to \cdots
	$$
	is exact, where $({-})_S$ denotes the base change to $S$.
\end{lemma}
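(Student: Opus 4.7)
The plan is to reduce the statement to the standard Mayer--Vietoris property of Nisnevich cohomology applied to the complex $G_{\Xcal/\Ycal}$.

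First, I would recall that for any Nisnevich distinguished square as in the statement and any complex $\mathcal{F}$ of Nisnevich sheaves on $X$, there is a canonical distinguished triangle
$$R\Gamma(X,\mathcal{F}) \to R\Gamma(U,\mathcal{F}|_U) \oplus R\Gamma(V,\mathcal{F}|_V) \to R\Gamma(W,\mathcal{F}|_W)$$
in $D(\Ab)$. This is the Mayer--Vietoris triangle for the Nisnevich topology, arising from the fact that a Nisnevich distinguished square is homotopy cocartesian among representable Nisnevich sheaves. I would then apply this triangle to $\mathcal{F} = G_{\Xcal/\Ycal}$ and take the associated long exact sequence, which will become the desired sequence once each term $R\Gamma(S, G_{\Xcal/\Ycal}|_S)$ is identified with $H^i_\Nis(\Xcal_S/\Ycal_S, G)$.

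What remains is to establish the quasi-isomorphism $G_{\Xcal/\Ycal}|_S \simeq G_{\Xcal_S/\Ycal_S}$ in $D(\Sh_\Nis(S))$ for $S \in \{U,V,W\}$. Since $G_{\Xcal/\Ycal}$ is by definition the cocone of $G_\Xcal \to \mathrm{R}f_*G_\Ycal$ and restriction along $S \to X$ is exact, this reduces to two compatibilities: (a) $G_\Xcal|_S \simeq G_{\Xcal_S}$, which is immediate from the defining formula $(U' \xrightarrow{\pi} X)\mapsto G(U', \pi^*D_X)$ and the fact that $S \to X$ is étale (so pulling back $D_X$ to an étale cover of $S$ recovers the modulus on $\Xcal_S$); and (b) $(\mathrm{R}f_*G_\Ycal)|_S \simeq \mathrm{R}(f_S)_*G_{\Ycal_S}$, where $f_S\colon \Ycal_S \to \Xcal_S$ denotes the base change of $f$ along $S \to X$.

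The main technical point will be (b), an instance of smooth (in fact étale) base change for the Nisnevich topology. This holds because $U \to X$ and $V \to X$ are étale, and étale pullback of a flasque or injective resolution on $Y_\Nis$ remains acyclic for direct image on $Y_S$; concretely, one may observe that the Nisnevich stalks of $(\mathrm{R}f_*G_\Ycal)|_S$ and $\mathrm{R}(f_S)_*G_{\Ycal_S}$ at a point $s \in S$ agree, as both compute Nisnevich cohomology of the henselian fiber $Y \times_X \Spec \Ocal_{S,s}^h$ with coefficients in $G_{\Ycal}$. The only bookkeeping is to verify that $\Ycal_S = (Y\times_X S, D_Y|_{Y\times_X S})$ really is the base-changed modulus pair and that $f_S$ remains an ambient morphism, both of which are immediate since étale pullback commutes with pullback of Cartier divisors. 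Substituting (a) and (b) into the Mayer--Vietoris triangle yields the long exact sequence of the lemma.
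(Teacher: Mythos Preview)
Your proposal is correct and follows essentially the same idea as the paper's proof: both use the Mayer--Vietoris property of Nisnevich cohomology together with compatibility of the modulus-fraction cohomology with base change along the distinguished square.

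The only organizational difference is that the paper avoids your step (b) entirely. Rather than applying Mayer--Vietoris to the complex $G_{\Xcal/\Ycal}$ and then invoking \'etale base change for $\mathrm{R}f_*$ to identify $(\mathrm{R}f_*G_\Ycal)|_S$ with $\mathrm{R}(f_S)_*G_{\Ycal_S}$, the paper first observes that the pullback of the Nisnevich distinguished square along $f\colon Y\to X$ is again a Nisnevich distinguished square on $Y$, so one gets a homotopy Cartesian square for $\mathrm{R}\Gamma(Y_{-},G_{\Ycal_{-}})$ directly; taking cocones of the map between the two homotopy Cartesian squares (one for $\Xcal$ on $X$, one for $\Ycal$ on $Y$) then gives the result. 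Your route is slightly longer but equally valid, and the base-change argument you sketch is correct since restriction along an \'etale morphism has an exact left adjoint and hence preserves injectives.
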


\begin{proof}
	The commutative square
	$$
		\xymatrix{
			\RGamma(X,G_{\Xcal})\ar[r]\ar[d] &\RGamma(V,G_{\Xcal_V})\ar[d]\\
			\RGamma(U,G_{\Xcal_U})\ar[r]	 &\RGamma(W,G_{\Xcal_W})
		}
	$$
	is homotopy Cartesian since $G_{\Xcal}$ is a Nisnevich sheaf.
	We also have a similar homotopy Cartesian diagram for $\Ycal$.
	Taking fibers, we see that
	$$
		\xymatrix{
			\RGamma(X,G_{\Xcal/\Ycal})\ar[r]\ar[d] &\RGamma(V,G_{\Xcal_V/\Ycal_V})\ar[d]\\
			\RGamma(U,G_{\Xcal_U/\Ycal_U})\ar[r]	 &\RGamma(W,G_{\Xcal_W/\Ycal_W})
		}
	$$
	is homotopy Cartesian and hence the result.
\end{proof}

A {\it standard point} on $\mathbb{P}^1\times \mathbb{P}^1$ is a $k$-rational point of the form $(\sigma,\tau)$ where $\sigma,\tau\in \{0,1,\infty\}$.
A {\it standard surface} is a $k$-scheme obtained by blowing up $\mathbb{P}^1\times \mathbb{P}^1$ along standard points.
When we consider a standard surface, we use the following notation:
\begin{align*}
		&E_{\sigma\tau}:=\text{exceptional divisor above $(\sigma,\tau)$}~(\sigma,\tau\in \{0,1,\infty\}),\\
		&D_\sigma:=\text{(strict transform of $\Pbb^1\times\{\sigma\}$)},\quad D'_\sigma:=\text{(strict transform of $\{\sigma\}\times\Pbb^1$)}~(\sigma\in \{0,1,\infty\}).
\end{align*}

\begin{lemma}\label{key_lemma}
	Let $G\in \CIrec$ and $B=\Bl_{(0,0)}\Abb^2$.
	Let $D_x$ (resp. $D_y$) denote the strict transform of the $x$-axis (resp. $y$-axis) and $E$ the exceptional divisor.
	Then
	$$
		H^i_\Nis\biggl(\frac{(B,D_x)}{D_y},G\biggr)\to H^i_\Nis\biggl(\frac{(B,D_x+E)}{(D_y,E|_{D_y})},G\biggr)
	$$
	is an isomorphism for each $i\geq 0$.
\end{lemma}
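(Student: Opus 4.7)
The plan is to establish the claim by analyzing the cofibers of the natural sheaf maps associated to the two modulus fractions on $B_\Nis$, using Mayer-Vietoris, logarithmic blow-up invariance, and cube-invariance.

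First, by the long exact sequence defining $H^i_\Nis(\Xcal/\Ycal, G)$, the map in question fits into a commutative ladder of long exact sequences; equivalently, the problem reduces to showing that the square
$$
\xymatrix{
G_{(B, D_x)} \ar[r] \ar[d] & G_{(B, D_x+E)} \ar[d] \\
i_*G_{D_y} \ar[r] & i_*G_{(D_y, E|_{D_y})}
}
$$
of sheaves on $B_\Nis$ (where $i\colon D_y \hookrightarrow B$ is the closed immersion) becomes homotopy Cartesian after applying $\RGamma(B, -)$. Since the horizontal maps are monomorphisms by semipurity, this is equivalent to showing that the induced map $Q \to i_*Q'$ on the quotient sheaves, with $Q := G_{(B, D_x+E)}/G_{(B, D_x)}$ and $Q' := G_{(D_y, E|_{D_y})}/G_{D_y}$, induces an isomorphism on $\RGamma(B, -)$. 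As $Q$ is supported on $E$ and $i_*Q'$ on the point $P_y := E \cap D_y$, this amounts to showing that the pullback map $\RGamma(E, Q|_E) \to Q'_{P_y}$ is a quasi-isomorphism.

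To analyze $\RGamma(E, Q|_E)$, I would exploit the line-bundle structure $\pi\colon B \to E \simeq \Pbb^1$: $B$ is the total space of $\Ocal_{\Pbb^1}(-1)$, with $D_x, D_y$ the fibers over $P_x, P_y$. Compactifying to $\bar B = \Bl_{(0:0:1)} \Pbb^2$ makes $\pi$ a $\Pbb^1$-bundle, so combining Lemma \ref{cube_bundle} with cube-invariance (Theorem \ref{cube_invariance}) controls $Q|_E$ away from $\{P_x, P_y\}$. A Mayer-Vietoris decomposition of $E$ via Lemma \ref{excision_lemma} then localizes the computation to Nisnevich neighborhoods of $P_x$ and $P_y$. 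At $P_x$, logarithmic blow-up invariance (Theorem \ref{log_blowup}) identifies the local structure of $(B, D_x + E)$ with the log-smooth pair $(\Abb^2, \{y = 0\})$, so the relevant contribution vanishes.

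The main obstacle will be the local analysis near $P_y$. In the standard blow-up chart with coordinates $(u, v)$ where $E|_V = \{u = 0\}$ and $D_y|_V = \{v = 0\}$, one must verify that the stalk of $Q|_E$ at $P_y$ matches $Q'_{P_y}$ under the map induced by $D_y \hookrightarrow B$. This requires invoking cube-invariance in the $v$-direction transverse to $D_y$, together with an explicit comparison of sections of $G$ on $(V, \{u = 0\})$ and $(V \cap D_y, \{P_y\})$ for such Nisnevich neighborhoods $V$. It is at this step that the combined force of semipurity, cube-invariance, and the structure of $G \in \CIrec$ becomes essential.
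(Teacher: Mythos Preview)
Your approach is genuinely different from the paper's and, as written, has a real gap at the step you yourself flag.

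The paper does not pass to quotient sheaves on $E$ at all. It instead compactifies $B$ to $B'=\Bl_{\{(0,0),(\infty,\infty)\}}(\Pbb^1\times\Pbb^1)$ and proves two \emph{global} vanishing statements for the cohomology of modulus fractions on $B'$: one by blowing $B'$ down to $\Bl_{(0:0:1)}\Pbb^2$ and invoking Lemma~\ref{toric_contractibility} together with Theorem~\ref{log_blowup}, the other by blowing $B'$ down to $\Pbb^1\times\Pbb^1$ and using cube-invariance directly. Since the two modulus fractions in the statement agree on the open complement $(\Pbb^1)^2\setminus\{(0,0)\}$, a single Mayer-Vietoris (Lemma~\ref{excision_lemma}) for the cover $B'=B\cup\bigl((\Pbb^1)^2\setminus\{(0,0)\}\bigr)$ then yields the lemma. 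The point is that on the compact $B'$ every relevant direction already carries a modulus divisor at infinity, so Theorem~\ref{cube_invariance} and Lemma~\ref{cube_bundle} apply without further work.

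Your $P_y$ step is precisely where this matters. In your chart $(u,v)$ with $E=\{u=0\}$ and $D_y=\{v=0\}$, you need the restriction-to-$\{v=0\}$ map
\[
G(V,\{u=0\})/G(V,\emptyset)\;\longrightarrow\;G(V\cap D_y,\{P_y\})/G(V\cap D_y,\emptyset)
\]
to be an isomorphism (in the derived sense) for small $V$. You propose to obtain this from ``cube-invariance in the $v$-direction'', but Theorem~\ref{cube_invariance} compares $\Xcal$ with $\Xcal\boxtimes\cube=\Xcal\boxtimes(\Pbb^1,\{\infty\})$: it requires a modulus at $v=\infty$, and none is present. For a general $G\in\CIrec$ the projection along an honest $\Abb^1$ does \emph{not} induce an isomorphism on $G$-cohomology, so this step does not go through as stated. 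Your compactification $\bar B=\Bl_{(0:0:1)}\Pbb^2$ does not fix this either, because you have not specified a modulus along the section at infinity $L_\infty$, and once you do, you must relate the resulting compact modulus fractions back to the ones on $B$ --- which is exactly what the paper's global argument accomplishes via $B'$.

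Two smaller points: ``controls $Q|_E$ away from $\{P_x,P_y\}$'' is not yet a statement (the quotient there is nonzero --- it is the jump from modulus $\emptyset$ to modulus $E$), and your use of Theorem~\ref{log_blowup} at $P_x$ is global, not local: it identifies $H^i$ of $(B,D_x+E)$ with that of $(\Abb^2,\{y=0\})$ on the whole surface, and says nothing about $(B,D_x)$ by itself near $P_x$.
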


\begin{proof}
	Consider a standard surface $B':=\Bl_{\{(0,0),(\infty,\infty)\}}(\Pbb^1\times \Pbb^1)$.
	We can blow-down $B'$ along $D_\infty$ and $D'_\infty$ to get $\Bl_{(0:0:1)}\Pbb^2$.
	$$
		\tikzfig{BlP1P1_to_BlP2}
	$$
	By Theorem \ref{log_blowup}, we have the following commutative diagram:
	$$
		\xymatrix{
		&H^i_\Nis((B',D_0+D_\infty+D'_\infty+E_{\infty\infty}),G)\ar[d]\\
		H^i_\Nis((\Bl_{(0:0:1)}\Pbb^2,L_0+L_\infty),G) \ar[r]\ar[rd]_-\alpha\ar[ur]^-\sim		&H^i_\Nis((B',D_0+D_\infty+2D'_\infty+E_{\infty\infty}),G)\ar[d]^-\beta\\
																&H^i_\Nis(\Spec k,G).
		}
	$$
	Here $\alpha$ (resp. $\beta$) is induced by $(1:1:1)\colon \Spec k\to \Bl_{(0:0:1)}\Pbb^2$ (resp. $(1,1)\colon \Spec k\to B'$).
	By Lemma \ref{toric_contractibility}, $\alpha$ is an isomorphism.
	It follows that the composition of the right vertical arrows is an isomorphism, and hence
	$$
		H^i_\Nis(\Spec k,G)\xrightarrow{\sim} H^i_\Nis((B',D_0+D_\infty+D'_\infty+E_{\infty\infty}),G).
	$$
	Since $H^i_\Nis(\Spec k,G)\to H^i_\Nis((D'_0,D_\infty|_{D'_0}),G)$ is also an isomorphism by Theorem \ref{cube_invariance}, we get
	\begin{align}
		\label{vanishing1} H^i_\Nis\biggl(\dfrac{(B',D_0+D_\infty+D'_\infty+E_{\infty\infty})}{(D'_0,D_\infty|_{D'_0})},G\biggr)=0\quad(i\geq 0)
	\end{align}
	by the long exact sequence.
	
	On the other hand, we can blow-down $B'$ along $E_{00}$ and $E_{\infty\infty}$ to get $\mathbb{P}^1\times \mathbb{P}^1$.
	$$
		\tikzfig{BlP1P1_to_P1P1}
	$$
	By Theorem \ref{log_blowup}, we have the following commutative diagram:
	$$
		\xymatrix{
		&H^i_\Nis((B',D_0+E_{00}+D_\infty+D'_\infty+E_{\infty\infty}),G)\ar[d]\\
		H^i_\Nis((\Pbb^1\times\Pbb^1,\{\infty\}\times\Pbb^1+\Pbb^1\times \{0,\infty\}),G) \ar[r]\ar[d] \ar[ur]^-\sim
		&H^i_\Nis((B',D_0+E_{00}+D_\infty+D'_\infty+2E_{\infty\infty}),G)\ar[d]\\
		H^i_\Nis((\{0\}\times \Pbb^1,\{0\}\times \{0,\infty\}),G)\ar[r]^-\sim	&H^i_\Nis((D'_0,D_\infty|_{D'_0}+E_{00}|_{D'_0}),G).
		}
	$$
	Theorem \ref{cube_invariance} implies that the left vertical arrow is an isomorphism, and hence so is the right vertical composite.
	Therefore we get
	\begin{align}
		\label{vanishing2}H^i_\Nis\biggl(\dfrac{(B',D_0+E_{00}+D_\infty+D'_\infty+E_{\infty\infty})}{(D'_0,D_\infty|_{D'_0}+E_{00}|_{D'_0})},G\biggr)=0\quad(i\geq 0)
	\end{align}
	by the long exact sequence.
	
	Now consider the Zariski open covering $B'=B\cup V$ where $V=(\Pbb^1\times\Pbb^1)\setminus\{(0,0)\}$.
	Applying Lemma \ref{excision_lemma} to this open covering and using (\ref{vanishing1}) and (\ref{vanishing2}), we get the desired result.
\end{proof}

\begin{proposition}\label{key_lemma_general}
	Let $\Xcal/\Ycal$ be a modulus fraction such that $X$ is a smooth surface.
	Let $x$ be a $k$-rational point of $X$.
	Suppose that there is a regular sysmtem of parameters $t_1,t_2$ at $x$ such that $D_X=\{t_1=0\}$, $Y=\{t_2=0\}$ and $D_Y=\{x\}$ locally around $x$.
	Then
	$$
		H^i_\Nis\biggl(\dfrac{(\Bl_xX,\tilde{D}_X)}{(Y,D_Y\setminus\{x\})},G\biggr)\to H^i_\Nis\biggl(\dfrac{(\Bl_xX,\tilde{D}_X+E)}{(Y,D_Y)},G\biggr)
	$$
	is an isomorphism for any $G\in \CIrec$ and $i\geq 0$, where $\tilde{D}_X$ is the strict transform of $D_X$ and $E$ is the exceptional divisor.
	If $\mathcal{X}/\mathcal{Y}$ is proper, then the morphism
	$$
		h_0^\Nis\biggl(\dfrac{(\Bl_xX,\tilde{D}_X+E)}{(Y,D_Y)}\boxtimes \dfrac{\mathcal{Z}}{\mathcal{W}}\biggr)\to h_0^\Nis\biggl(\dfrac{(\Bl_xX,\tilde{D}_X)}{(Y,D_Y\setminus\{x\})}\boxtimes \dfrac{\mathcal{Z}}{\mathcal{W}}\biggr)
	$$
	is a split monomorphism for any proper modulus fraction $\mathcal{Z}/\mathcal{W}$.
\end{proposition}

\begin{proof}
	The second assertion follows from the first one and Lemma \ref{CIrec_factorization_lemma}.
	Suppose that $\pi\colon V\to X$ is an \'etale morphism with $V\in \Sm$ such that $\pi^{-1}(x)\to \{x\}$ is an isomorphism.
	Setting $U=X\setminus \{x\}$ and $\pi^{-1}(x)=\{x'\}$, we get a Nisnevich distinguished square
	$$
	\xymatrix{
		\pi^{-1}(U)\ar[r]\ar[d]		&\Bl_{x'}V\ar[d]^-{\text{\'etale}}\\
		U\ar[r]^-{\text{open}}					&\Bl_xX.
	}
	$$
	By Lemma \ref{excision_lemma}, we see that the claim for $(\Xcal/\Ycal,x)$ is equivalent to that for $(\Xcal_V/\Ycal_V,{x'})$.
	Therefore we may reduce to the case $X=\Abb^2$, $x=(0,0)$, $D_X=(x\text{-axis})$, $Y=(y\text{-axis})$ and $D_Y=\{(0,0)\}$.
	In this case the claim follows from Lemma \ref{key_lemma}.
\end{proof}

\begin{proof}[\bf Proof of Theorem \ref{main3}]
	Consider a standard surface $B:=\Bl_{(0,0)}(\Pbb^1\times \Pbb^1)$.
	$$
		\tikzfig{BlP1P1_rev}
	$$
	We define modulus pairs $\Bcal,\Dcal,\Bcal',\Dcal',\Ecal$ by
	\begin{align*}
		\Bcal &= (B, D_0+D_\infty+(n+1)D'_\infty+E_{00}),\\
		\Dcal &= (D'_0,D_\infty|_{D'_0}+E_{00}|_{D'_0}),\\
		\Bcal' &= (B, D_0+D_\infty+(n+1)D'_\infty),\\
		\Dcal' &= (D'_0,D_\infty|_{D'_0}),\\
		\Ecal &= (E_{00}, D_0|_{E_{00}}) \simeq \cube.
	\end{align*}
	There are canoical morphisms of modulus fractions
	$$
		\Bcal'/(\Dcal'\sqcup \Ecal) \leftarrow \Bcal/\Dcal \rightarrow \mathbb{W}_n^+\boxtimes \mathbb{G}_m^+.
	$$
	The induced morphism $h_0^\Nis\biggl(\dfrac{\Bcal}{\Dcal}\boxtimes\dfrac{\mathcal{X}}{\mathcal{Y}}\biggr)
	\to  h_0^\Nis\biggl(\dfrac{\Bcal'}{\Dcal'\sqcup \Ecal}\boxtimes\dfrac{\mathcal{X}}{\mathcal{Y}}\biggr)
	\simeq h_0^\Nis\biggl(\dfrac{\Bcal'}{\Dcal'}\boxtimes\dfrac{\mathcal{X}}{\mathcal{Y}}\biggr)$ is a split monomorphism for any proper modulus fraction $\mathcal{X}/\mathcal{Y}$ by Proposition \ref{key_lemma_general}.
	We define $\delta\colon \mathbb{P}^1\to B$ to be the strict transform of
	$\mathbb{P}^1\to \mathbb{P}^1\times \mathbb{P}^1;\; x\mapsto (x,x)$.
	$$
		\tikzfig{diagonal_rev}
	$$
	The morphism $\delta$ induces the following commutative diagram of modulus fractions:
	$$
	\xymatrix{
		\mathbb{D}_{n+1}^+\ar[r]^-{\delta}\ar[d]		&\Bcal/\Dcal\ar[r]\ar[d]			&\Wbb_n^+\boxtimes \Gbb_m^+\\
		\mathbb{W}_{n+1}^+\ar[r]^-{\delta}						&\Bcal'/(\Dcal'\sqcup \Ecal).
	}
	$$
	Applying $h_0^\Nis({-}\boxtimes \mathcal{X}/\mathcal{Y})$, we conclude that
	$$
		h_0^\Nis(\delta\boxtimes \id)\colon h_0^\Nis(\Dbb_{n+1}^+\boxtimes \Xcal/\Ycal)\to h_0^\Nis(\Wbb_n^+\boxtimes \Gbb_m^+\boxtimes \Xcal/\Ycal)
	$$
	factors through $h_0^\Nis(\Wbb_{n+1}^+\boxtimes \Xcal/\Ycal)$.
\end{proof}

Finally, we deduce the following result from our main theorem.
This generalizes \cite[Theorem 5.19]{RSY} which required $\ch(k)\not \in \{2,3,5\}$.

\begin{theorem}\label{application}
	If $\ch(k)\neq 2$, then the extended differential symbol gives an isomorphism
	$$
		h_0^\Nis((\Gbb_a^+)\boxtimes(\Gbb_m^+)^{\boxtimes s}) \xrightarrow{\sim} \Omega^s_{{-}/\Zbb}.
	$$
\end{theorem}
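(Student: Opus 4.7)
The plan is to build a two-sided inverse to the morphism $\alpha\colon \mathcal{F}\to \Omega^s_{{-}/\Zbb}$ induced by the extended differential symbol, where $\mathcal{F}:=h_0^\Nis((\Gbb_a^+)\boxtimes(\Gbb_m^+)^{\boxtimes s})$. This follows the same strategy as in Theorem \ref{RSY}, but now invokes Theorem \ref{main} to remove the restrictions on $\ch(k)$ other than $\ch(k)\neq 2$.

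First I would consider the tautological $(1,s)$-symbol $\tilde{\psi}_K\colon K\times (K^\times)^s\to \mathcal{F}(K)$, $(a,b_1,\dots,b_s)\mapsto a\otimes b_1\otimes\dots\otimes b_s$, which is bounded by Theorem \ref{bounded_Weil}. Since $\ch(k)\neq 2$, Theorem \ref{main} asserts that $\tilde{\psi}$ satisfies (ST1) and (ST2). Then by Lemma \ref{omega_universal}, each $\tilde{\psi}_K$ factors uniquely through the universal extended differential symbol $\varphi^{\mathrm{univ}}_K\colon K\times(K^\times)^s\to \Omega^s_{K/\Zbb}$, producing a natural family of homomorphisms $\beta_K\colon \Omega^s_{K/\Zbb}\to \mathcal{F}(K)$.

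To upgrade $\{\beta_K\}$ to a sheaf morphism $\beta\colon \Omega^s_{{-}/\Zbb}\to \mathcal{F}$, I would apply Lemma \ref{injectivity_factorization_lemma}. Set $T:=\Gbb_a\otimes^\NST \Gbb_m^{\otimes s}$ and let $\Phi\colon T\to \mathcal{F}$, $\Psi\colon T\to \Omega^s_{{-}/\Zbb}$ be the morphisms in $\NST$ corresponding to the tautological and the extended differential symbols respectively. Note that $\Phi$ is a Nisnevich epimorphism since $\mathcal{F}\simeq \omega_!^\Nis h_0^\cube(\Gbb_a^\#\otimes^\MPST (\Gbb_m^\#)^{\otimes s})$ is a quotient of $T\simeq \omega_!^\Nis(\Gbb_a^\#\otimes^\MPST (\Gbb_m^\#)^{\otimes s})$, and that $\mathcal{F}$ has injectivity by Corollary \ref{reciprocity_unramified}. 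The key local claim is that $\Psi$ is also a Nisnevich (even Zariski) epimorphism. For smooth $X$ and $x\in X$, pick a regular system of parameters $t_1,\dots,t_d$ at $x$. On $\{t_i\neq 0\}$ one has $dt_i=t_i\dlog(t_i)$, and on $\{t_i\neq 1\}$ one has $dt_i=-(1-t_i)\dlog(1-t_i)$, and these two opens together cover a neighborhood of $x$; meanwhile $d\alpha$ for $\alpha\in k$ is either zero (when $\alpha\in\{0,1\}$) or equal to $\alpha\dlog(\alpha)$. Taking exterior products, every local section of $\Omega^s_{X/\Zbb}$ is, on a sufficiently fine Zariski cover, a finite sum of forms $a\dlog(b_1)\wedge\dots\wedge\dlog(b_s)$. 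Thus Lemma \ref{injectivity_factorization_lemma} produces $\beta\in \NST$ with $\beta\circ \Psi=\Phi$.

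Finally, since $\alpha\circ \Phi=\Psi$ by the very definition of $\alpha$, the identity $\alpha\circ\beta\circ\Psi=\alpha\circ\Phi=\Psi$ forces $\alpha\circ\beta=\id_{\Omega^s_{{-}/\Zbb}}$ by epi-ness of $\Psi$, and $\beta\circ\alpha\circ\Phi=\beta\circ\Psi=\Phi$ forces $\beta\circ\alpha=\id_{\mathcal{F}}$ by epi-ness of $\Phi$. Hence $\alpha$ is an isomorphism. The substantive input is Theorem \ref{main}, which is responsible for the characteristic hypothesis; the remaining epimorphism argument for $\Psi$ is a routine local computation, and is the only place where one has to work outside the purely formal framework.
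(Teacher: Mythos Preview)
Your proof is correct and follows essentially the same route as the paper's: both apply Theorem \ref{main} to the tautological bounded $(1,s)$-symbol with target $\mathcal{F}=h_0^\Nis(\Gbb_a^+\boxtimes(\Gbb_m^+)^{\boxtimes s})$, then use Lemma \ref{omega_universal} together with Lemma \ref{injectivity_factorization_lemma} to factor it through $\Omega^s_{{-}/\Zbb}$, producing the inverse. The paper phrases this as a universal-property argument (every map $\mathcal{F}\to F$ in $\RSC_\Nis$ factors uniquely through $\Omega^s_{{-}/\Zbb}$, then take $F=\mathcal{F}$), while you build the inverse explicitly; these are the same argument. One minor simplification: your local computation showing $\Psi$ is a Zariski epimorphism is unnecessary, since $\Psi_K$ is surjective for every field $K$ by the presentation of $\Omega^s_{K/\Zbb}$ used in Lemma \ref{omega_universal}, and the cokernel of $\Psi$, being a quotient of the reciprocity sheaf $\Omega^s_{{-}/\Zbb}$, lies in $\RSC_\Nis$ and hence has injectivity, so it vanishes.
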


\begin{proof}
	Let $F\in \RSC_\Nis$.
	A bounded $(1,s)$-symbol $h_0^\Nis(\Gbb_a^+\boxtimes (\Gbb_m^+)^{\boxtimes s})\to F$ satisfies (ST1) and (ST2) if and only if it factors (uniquely) through the differential symbol $h_0^\Nis(\Gbb_a^+\boxtimes (\Gbb_m^+)^{\boxtimes s}) \twoheadrightarrow \Omega^s_{{-}/\Zbb}$; this follows from Lemma \ref{injectivity_factorization_lemma} and Lemma \ref{omega_universal}.
	However, Theorem \ref{main} says that {\it any} bounded $(1,s)$-symbol satifies (ST1) and (ST2), so any morphism $h_0^\Nis(\Gbb_a^+\boxtimes (\Gbb_m^+)^{\boxtimes s})\to F$ factors (uniquely) through $\Omega^s_{{-}/\Zbb}$.
	Since $h_0^\Nis((\Gbb_a^+)\boxtimes(\Gbb_m^+)^{\boxtimes s})$ itself is a reciprocity sheaf, we get the claimed isomorphism by the universal property.
\end{proof}

\vspace{.2in}

\printbibliography

\end{document}